\begin{document}

\renewcommand{\phi}{\varphi}
\newcommand{\gal}{\textnormal{Gal}}
\newcommand{\supp}{{\rm supp}}
\def\iff{\leftrightarrow}
\def\Iff{\Longleftrightarrow}
\newcommand{\Fr}{{\rm Fr}}
\newcommand{\Sym}{{\rm Sym}}
\newcommand{\Alt}{{\rm Alt}}
\def\ab{{\rm ab}}
\def\sol{{\rm sol}}
\def\and{{\rm and}}
\def\cd{{\rm cd}}
\def\alg{{\rm alg}}
\def\Aut{{\rm Aut}}
\def\Hom{{\rm Hom}}
\def\res{{\rm res}}
\def\id{{\rm id}}

\newcommand{\calC}{{\mathcal C}}
\newcommand{\calE}{{\mathcal E}}
\newcommand{\calO}{{\mathcal O}}

\newcommand{\fgag}{{\bar f}}
\newcommand{\Fgag}{{\bar F}}
\newcommand{\Rgag}{{\bar R}}
\def\sigmagag{{\bar \sigma}}

\def\Ehat{{\hat E}}
\def\Fhat{{\hat F}}
\def\Lhat{{\hat L}}
\def\Mhat{{\hat M}}
\def\phihat{{\hat \varphi}}

\newcommand{\Kgal}{{\tilde K}}

\newcommand{\bbA}{\mathbb{A}}
\newcommand{\bbC}{\mathbb{C}}
\newcommand{\bbF}{\mathbb{F}}
\newcommand{\bbN}{\mathbb{N}}
\newcommand{\bbP}{\mathbb{P}}
\newcommand{\bbQ}{\mathbb{Q}}
\newcommand{\bbZ}{\mathbb{Z}}
\newcommand{\bbZhat}{{\hat{\mathbb{Z}}}}

\newcommand{\pfrak}{{\mathfrak p}}
\newcommand{\Pfrak}{{\mathfrak P}}

\newcommand{\bfa}{{\mathbf a}}
\newcommand{\bfA}{{\mathbf A}}
\newcommand{\bfT}{{\mathbf T}}
\newcommand{\bfsigma}{{\boldsymbol{\sigma}}}
\newcommand{\bfomega}{{\boldsymbol{\omega}}}

\renewcommand{\theenumi}{\alph{enumi}}
\renewcommand{\labelenumi}{\textnormal{(\theenumi)}}

\newtheorem{lemma}{Lemma}[section]
\newtheorem{theorem}[lemma]{Theorem}
\newtheorem{corollary}[lemma]{Corollary}
\newtheorem{proposition}[lemma]{Proposition}
\newtheorem{conjecture}[lemma]{Conjecture}
\theoremstyle{remark}
\newtheorem{remark}[lemma]{Remark}
\newtheorem{question}[lemma]{Question}
\theoremstyle{definition}
\newtheorem{condition}[lemma]{Condition}
\newtheorem{necs-cond}[lemma]{Necessary Condition}
\newtheorem{definition}[lemma]{Definition}

\CompileMatrices 

\title{Irreducible values of polynomials}%

\author{Lior Bary-Soroker}%
\address{
Institut f\"ur Experimentelle Mathematik,
Universit\"at Duisburg-Essen,
Ellernstrasse 29,
D-45326 Essen,
Germany}
\email{barylior@math.huji.ac.il}%
%

\date{\today}%
\begin{abstract}
Schinzel's Hypothesis H is a general conjecture in number theory on prime values of polynomials that generalizes, e.g., the twin prime conjecture and Dirichlet's theorem on primes in arithmetic progression. We prove an arithmetic analog of this conjecture for polynomial rings over pseudo algebraically closed fields. This implies results over large finite fields. A main tool in the proof is an irreducibility theorems \`a la Hilbert.
\end{abstract}
\maketitle

\section{Introduction}
A necessary condition for a finite family of irreducible polynomials $f_1(X), \ldots, f_r(X) \in \mathbb{Z}[X]$ with positive leading coefficients to admit infinitely many simultaneous prime values in $\mathbb{Z}$ is that there are \emph{no local obstructions}, i.e., the function $x\mapsto f_1(x)\cdots f_r(x) \mod p$ is not the zero function, for all primes $p\in \mathbb{Z}$. Schinzel's Hypothesis H predicts that this condition is also necessary. Special cases of this conjecture are Dirichlet's theorem on primes in arithmetic progressions (taking $f_1 = aX + b$) and the twin prime conjecture (taking $f_1 = X-1$ and $f_2 = X+1$). Dirichlet's theorem is the only proven case of the conjecture, sieve methods are used to obtain near-misses of the conjecture (e.g., Brun's sieve and Chen's theorem).

The analogy between the rings $\mathbb{Z}$ and $\mathbb{F}_q[t]$, where $\mathbb{F}_q$ is the field of $q$ elements, suggests a na\"ive Hypothesis H analog for the ring $\mathbb{F}_{q}[t]$: For every finite family of irreducible polynomials $f_1(t,X), \ldots, f_r(t,X)\in \mathbb{F}_q[t,X]$ having no local obstructions there exist infinitely many simultaneous prime values in $\mathbb{F}_q[t]$. It is quite surprising that this conjecture fails, e.g., all the values of $X^8 + t^3$ in $\mathbb{F}_2[t]$ are composite \cite{Swan1962}. In \cite{ConradConradGross2008} Conrad, Conrad, and Gross study global obstructions coming from values of the M\"obius function. 

Nevertheless there are some general results for \emph{large} finite fields. In \cite{BenderWittenberg2005} Bender and Wittenberg find linear irreducible substitutions  under the assumption that the Zariski closure of each of the plane curves $f_i(T,X)$ in $\mathbb{P}^2$ is smooth, a technical assumption on the characteristic (in particular the characteristic is odd) and of course, that the cardinality of the field is sufficiently large comparable to the degrees of the polynomials.

In \cite{Pollack2008} Pollack proves an arithmetic type result. 

\begin{theorem}[Pollack]\label{thm:large-finite-arithmetic-type}
Let $n,B$ be positive integers, $p$ a prime such that $p\nmid 2n$, $q$ a power of $p$, $f_1(X), \ldots, f_r(X) \in \mathbb{F}_q[X]$ non-associate irreducible polynomials such that $\sum \deg(f_i)\leq B$. Then the number of degree $n$ monic $g(t) = t^n + \cdots \in \mathbb{F}_q[t]$ for which all of the $f_i(g(t))$ are irreducible in $\mathbb{F}_q[t]$ is 
\[
\frac{q^n}{n^r} + O_{n,B}(q^{n-\frac{1}{2}}).
\]
\end{theorem}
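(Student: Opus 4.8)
The plan is to count degree-$n$ monic polynomials $g(t)$ by understanding when the composite $f_i(g(t))$ stays irreducible, and the natural framework is the function-field analog of the Chebotarev density theorem applied to a suitable Galois cover. Let me sketch this.

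Let me think about what's going on. We want to count monic $g$ of degree $n$ such that all $f_i(g(t))$ are irreducible. The key insight should be that $f_i(g(t))$ being irreducible relates to a Galois/monodromy condition on a family parametrized by the coefficients of $g$.

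First I would set up the counting via a parameter space. Writing $g(t) = t^n + a_{n-1}t^{n-1} + \cdots + a_0$ with $\mathbf{a} = (a_0,\ldots,a_{n-1}) \in \mathbb{A}^n$, the goal is to count $\mathbf{a} \in \mathbb{F}_q^n$ for which each specialized polynomial $f_i(g_{\mathbf{a}}(t)) \in \mathbb{F}_q[t]$ is irreducible. The main term $q^n/n^r$ suggests that for each $f_i$ (of degree $d_i$, say) the probability that $f_i(g(t))$ is irreducible is asymptotically $1/n$, independently across the $r$ factors. So I would try to realize each irreducibility event as a Chebotarev condition in the Galois group of a cover, and the factor $1/n$ as the proportion of $n$-cycles in a symmetric group $S_n$.

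The key step is an irreducibility/monodromy computation. For a fixed irreducible $f_i$ over $\mathbb{F}_q$, consider the polynomial $f_i(g(t))$ as $\mathbf{a}$ varies, i.e., view $t$ as the variable and the $a_j$ as parameters, so we study the splitting field of $f_i(g(t))$ over the rational function field $\mathbb{F}_q(\mathbf{a})$, or better over $\overline{\mathbb{F}_q}(\mathbf{a})$ to get the geometric monodromy group. I expect the geometric monodromy group of $f_i(g_{\mathbf{a}}(t))$ (in the variable $t$, over the base $\mathbb{A}^n$ of coefficients) to be as large as possible, namely a wreath-type or symmetric group acting on the $n\cdot d_i$ roots, and crucially that the arithmetic and geometric monodromy groups coincide so no ``constant field extension'' spoils the count. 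Concretely, $f_i(g(t))=0$ means $g(t)$ takes a root of $f_i$ as its value; since $g$ is a general degree-$n$ polynomial in its coefficients, the fiber consists of $n$ points permuted by the full symmetric group $S_n$, and combining over the $d_i$ roots of $f_i$ gives monodromy $S_n \wr (\text{something})$ or at least a product whose $n$-cycle density in each block is $1/n$. Irreducibility of $f_i(g(t))$ over $\mathbb{F}_q$ corresponds to the Frobenius at $\mathbf{a}$ acting as a single $(n d_i)$-cycle, or more precisely so that the Galois orbits match the factorization type; the proportion of such Frobenius elements is what yields $1/n$ per factor. The hypothesis $p \nmid 2n$ is exactly what guarantees tameness and the expected generic monodromy, avoiding the wild-ramification pathologies exemplified by $X^8+t^3$ in characteristic $2$.

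Finally I would assemble the estimate. Once the monodromy groups are identified and shown to be independent across the distinct $f_i$ (using that the $f_i$ are non-associate irreducibles, so the corresponding covers are ``linearly disjoint''), the function-field Chebotarev theorem — equivalently, the Lang--Weil / Riemann Hypothesis for curves applied to the relevant covering and its fiber counts, with explicit error control from the Weil bounds — gives the number of good $\mathbf{a}$ as the product of the individual densities $1/n$ times $q^n$, with an error term of size $O_{n,B}(q^{n-1/2})$ coming from the square-root cancellation in the Weil estimate. The main obstacle I anticipate is precisely the monodromy computation: proving that the geometric monodromy group is large enough and that it equals the arithmetic monodromy group (so that the constant field $\mathbb{F}_q$ contributes no extra splitting), uniformly in $q$, and controlling the ramification locus so that the Weil bound's implied constant depends only on $n$ and $B$. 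This is where the condition $p \nmid 2n$ and the non-associate irreducibility of the $f_i$ must be used decisively.
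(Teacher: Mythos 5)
Your overall strategy---parametrize $g$ by its coefficient vector $\mathbf{a}\in\mathbb{A}^n$, compute the monodromy of $f_i(g(\mathbf{a},t))$ over the coefficient space, and convert irreducibility of the specialization into a Frobenius condition counted with square-root error---is the right skeleton; it is essentially Pollack's original route, and it also underlies this paper's argument (which, however, packages it through PAC and pseudo-finite fields, Ax's transfer theorem, and Lang--Weil applied to an auxiliary variety $\widehat{W}$). The genuine gap is in your central monodromy claim. You assert that ``crucially the arithmetic and geometric monodromy groups coincide so no constant field extension spoils the count.'' This is false whenever some $d_i=\deg f_i>1$, and if it were true the theorem would fail. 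Indeed, over $\overline{\mathbb{F}}_q$ the polynomial $f_i$ splits, so the geometric Galois group of $f_i(g(\mathbf{a},t))$ over $\overline{\mathbb{F}}_q(\mathbf{a})$ is (at most) the direct product $S_n^{\Omega_i}$ of $d_i$ copies of $S_n$, one copy permuting the $n$ roots of $g-\omega$ for each root $\omega$ of $f_i$; every element of this group preserves the $d_i$ blocks, hence never acts transitively on all $nd_i$ roots. Moreover, the splitting field of $f_i\circ g$ over $\mathbb{F}_q(\mathbf{a})$ contains the roots $\omega=g(\eta)$ of $f_i$, so its constant field contains $\mathbb{F}_{q^{d_i}}$ and the two monodromy groups \emph{cannot} coincide. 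The correct picture is the opposite of your assumption: the arithmetic group is the strictly larger wreath product $S_n\wr_{\Omega_i}\gal(\mathbb{F}_{q^{d_i}}/\mathbb{F}_q)$, the Frobenius at $\mathbf{a}$ is constrained to the coset $S_n^{\Omega_i}\sigma$ over the Frobenius generator $\sigma$, and irreducibility of $f_i(g(\mathbf{a},t))$ means that $\eta\sigma$ (with $\eta\in S_n^{\Omega_i}$) acts transitively on $[n]\times\Omega_i$, which happens if and only if the cyclic product $\eta(\omega)\,\eta(\sigma\omega)\cdots\eta(\sigma^{d_i-1}\omega)$ is an $n$-cycle. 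Counting such elements in the coset (the paper's Lemma~\ref{lem:ncyclesinwrproduct}) is exactly what produces density $1/n$ per factor, hence the main term $q^n/n^r$; your fallback guess that ``Frobenius acts as a single $(nd_i)$-cycle'' would instead suggest a density $1/(nd_i)$ and a wrong main term. So the step from monodromy to the constant $1/n^r$ is not merely unproved in your sketch---the claim on which you base it is incompatible with the statement being proved.

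A second, related gap: the hardest technical input is only asserted (``I expect\dots''), namely that the splitting fields of $g-\omega$, as $\omega$ ranges over all roots of $f_1\cdots f_r$, are linearly disjoint over $\overline{\mathbb{F}}_q(\mathbf{a})$, so that the geometric group is the full product $S_n^{\Omega}$ with $|\Omega|=\sum_i d_i$. This is the paper's Proposition~\ref{prop:prod-sym-grps-oddchar}; it requires a real argument (the paper uses discriminant classes and ramification computations), and it genuinely fails for $p=n=2$, so it cannot follow from soft generality. You correctly identify this as the main obstacle, but without it, and without the coset form of the Chebotarev/Lang--Weil count over the $n$-dimensional base (with error uniform in $q$, depending only on $n$ and $B$), neither the main term nor the error term is justified.
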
 

The asserted constant in Pollack's theorem is of order of magnitude $(n!)^B$, hence the asymptotic is useless when $n\to \infty$, but it works when $n$ is fixed and $q\to \infty$. A conjectural formula for $q^n\to \infty$ is given in \cite{Pollack2008-conjecture}, there the main term depends on local data.

In this work we prove an arithmetic type theorem for \emph{PAC} fields satisfying an obvious necessary condition. A field $K$ is called \textbf{PAC} (short for \textbf{pseudo algebraically closed} field) if $V(K)\neq \emptyset$, for every absolutely irreducible non-void $K$-variety $V$. For example, Pop shows that the field $\mathbb{Q}_{{\rm tr}}(\sqrt{-1})$ we get by taking all totally real numbers and $\sqrt{-1}$ is PAC. Let $\sigma_1, \ldots, \sigma_e\in \gal(\mathbb{Q}) $ and let $\tilde{\mathbb{Q}} (\sigma_1, \ldots, \sigma_e)$ be the field fixed by all $\sigma_1, \ldots, \sigma_e$. Then Jarden proves that the probability that $\tilde{\mathbb{Q}} (\sigma_1, \ldots, \sigma_e)$ is PAC is $1$, w.r.t.\ the probability Haar measure of the profinite, hence compact, group $\gal(\mathbb{Q})$.  (Here $\tilde{\mathbb{Q}}$ is the field of all algebraic numbers, and $\gal(\mathbb{Q}) = \gal(\tilde{\mathbb{Q}}/\mathbb{Q})$ is the absolute Galois group of $\mathbb{Q}$.) 
\begin{theorem}
\label{thm:Ans_PAC}
Let $K$ be a PAC field of characteristic $p\geq 0$, let  $f_1, \ldots, f_r\in K[X]$ be non-associate irreducible separable polynomials with respective roots $\omega_1, \ldots, \omega_r$, and let $n$ be a positive integer, odd if $p=2$. Suppose that 
\begin{enumerate}
\renewcommand{\theenumi}{$\star$}
\item \label{cond:exists_sep_ext}
$K(\omega_i)$ has a separable extension of degree $n$, for $i=1,\ldots, r$. 
\end{enumerate}
Then there exists a Zariski dense set of $(a_1, \ldots, a_n)\in K^n$ such that, for $g(t) = t^n + a_1 t^{n-1} + \cdots + a_n$, all of the $f_{i}(g(t))$
are separable and irreducible in $K[t]$.
\end{theorem}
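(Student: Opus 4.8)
The plan is to reduce the simultaneous irreducibility of the $f_i(g(t))$ to a statement about a single generic fibre, to compute the generic Galois group, and then to specialize using the PAC hypothesis.

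First I would establish the elementary reduction: for a fixed monic $g$ of degree $n$ and a root $\omega_i$ of $f_i$, the composite $f_i(g(t))$ is separable and irreducible over $K$ if and only if $g(X)-\omega_i$ is separable and irreducible over $K(\omega_i)$. Indeed, if $\theta$ is a root of $f_i(g(X))$ then $g(\theta)$ is a root of $f_i$, so $K(\omega_i)=K(g(\theta))\subseteq K(\theta)$, and $\theta$ is a root of $g(X)-g(\theta)\in K(\omega_i)[X]$; hence $[K(\theta):K(\omega_i)]\le n$, with equality exactly when $g(X)-\omega_i$ is irreducible over $K(\omega_i)$. Since $\deg f_i(g)=n\deg f_i=n[K(\omega_i):K]$, equality forces $f_i(g)$ to be the minimal polynomial of $\theta$, hence irreducible. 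Separability is checked factor by factor over $\tilde K$, using that the $f_i$ are non-associate, irreducible and separable, so that the roots $\omega_i^{(j)}$ are pairwise distinct. This reduces the theorem to finding a Zariski dense set of $\bfa\in K^n$ with $g(X)-\omega_i$ separable and irreducible over $K(\omega_i)$ for every $i$ at once; note that $K(\omega_i)$ is itself PAC, being a separable algebraic extension of the PAC field $K$.

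Next I would analyze the generic situation. Introduce indeterminates $\bfT=(T_1,\dots,T_n)$ and set $g_{\bfT}(X)=X^n+T_1X^{n-1}+\cdots+T_n$. Substituting $T_n\mapsto T_n-\omega_i$ shows that $g_{\bfT}(X)-\omega_i$ is the generic polynomial of degree $n$ over $K(\omega_i)$; using that $n$ is odd when $p=2$ to guarantee separability and full symmetric monodromy, it is separable and irreducible over $K(\omega_i)(\bfT)$ with Galois group $S_n$, and its splitting field is a regular extension of $K(\omega_i)$. Passing to the compositum over all roots of $f=\prod_i f_i$, the splitting field of $f(g_{\bfT}(X))$ over $K(\bfT)$ has geometric Galois group $S_n^{d}$ with $d=\sum\deg f_i$, while $\gal(K)$ permutes the $d$ blocks through its action on the root set $\Omega=\{\omega_i^{(j)}\}$. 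For the generic (full) group each $f_i(g_{\bfT})$ is then irreducible: $\gal(K)$ is transitive on the $\deg f_i$ conjugates of $\omega_i$, and $S_n$ is transitive on each fibre, so the action on the $n\deg f_i$ roots is transitive. Finally I would specialize $\bfT\mapsto\bfa\in K^n$ by an irreducibility theorem of Hilbert type valid over PAC fields: the goal is to produce $\bfa$, ranging over a Zariski dense set, for which the decomposition at the specialization still acts transitively on each block, equivalently for which $g_{\bfa}(X)-\omega_i$ remains separable and irreducible of degree $n$ over $K(\omega_i)$. I would obtain such $\bfa$ by finding $K$-rational points on the absolutely irreducible $K$-varieties attached to the covers above, refined so that the prescribed transitive (e.g.\ $n$-cycle) behaviour is cut out; since $K$ is PAC every such variety has a $K$-point, and intersecting with an arbitrary nonempty open subset of $\bbA^n$ yields density. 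Condition ($\star$) enters precisely here: it guarantees that a separable degree $n$ extension of $K(\omega_i)$ exists, which is exactly the arithmetic input needed for the target transitive specialization to be realizable over $K(\omega_i)$ rather than only geometrically.

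The main obstacle is this last step. Preserving irreducibility under specialization is automatic over Hilbertian fields but not over PAC fields, and here it is complicated by two constraints that must hold simultaneously: the coefficients $\bfa$ must lie in $K$, not merely in the larger fields $K(\omega_i)$, and a single $\bfa$ must work for all $i$ together. The crux is therefore to package the required transitivity conditions into one absolutely irreducible $K$-variety whose $K$-points — supplied by the PAC property and made available by ($\star$) — deliver the simultaneous irreducibility, and to arrange this over every nonempty Zariski open set so as to obtain a dense family of good $\bfa$.
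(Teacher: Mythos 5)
Your overall strategy coincides with the paper's (generic polynomial, generic Galois group, PAC specialization), and your opening reduction --- $f_i(g(t))$ irreducible over $K$ if and only if $g(X)-\omega_i$ is irreducible over $K(\omega_i)$ --- is correct. But the two steps that carry all of the difficulty are asserted rather than proved, and neither is routine. The first is your claim that the splitting field of $f(g_{\bfT}(X))$ has geometric Galois group $S_n^{d}$, i.e.\ that the splitting fields of the shifted generic polynomials $g_{\bfT}-\omega$, $\omega\in\Omega$, are linearly disjoint over $\tilde K(\bfT)$. This is not a formal consequence of each factor having monodromy $S_n$: indeed a single generic polynomial is separable with Galois group $S_n$ in \emph{every} characteristic (no parity hypothesis needed), so your appeal to ``$n$ odd when $p=2$'' at that point is misplaced. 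The parity hypothesis is needed exactly for the joint statement, which is the paper's Proposition~\ref{prop:prod-sym-grps-oddchar} and its technical heart; it genuinely fails for $p=n=2$ (there linear disjointness holds if and only if no even sum of distinct elements of $\Omega$ vanishes), so any proof must use the hypothesis essentially. The paper proves it by reducing linear disjointness to linear independence of discriminant classes in $H^1(\cdot,\mathbb{Z}/2\mathbb{Z})$ (square classes for $p\neq 2$, Artin--Schreier classes for $p=2$), via a group-theoretic lemma on subgroups of $S_n^r$ surjecting onto $(\mathbb{Z}/2\mathbb{Z})^r$, followed by a case-by-case computation with specialized polynomials and ramification arguments. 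None of this, nor any substitute for it, appears in your proposal.

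The second gap is the specialization step, which you correctly flag as the main obstacle but do not resolve. The cover $W=V_{f\circ g}\to \mathbb{A}^n$ is not absolutely irreducible over $K$ (its function field has constant field $L$, the splitting field of $f$), so the PAC property cannot be applied to it directly, and a $K$-point of $\mathbb{A}^n$ lying under a point of $W$ carries no prescribed decomposition behavior. The paper's resolution has two ingredients you would need to supply. First, condition \eqref{cond:exists_sep_ext} must be converted into a \emph{solution} $\theta\colon\gal(K)\to S_n\wr_\Omega\gal(L/K)$ of the embedding problem whose image acts transitively on each block $[n]\times\Omega_i$; this is Lemma~\ref{lem:gal-tower}, which assembles the degree-$n$ separable extensions $M_i/K(\omega_i)$ into a single homomorphism over $K$ lifting the restriction map --- precisely the device that reconciles your two competing constraints ($\bfa\in K^n$, but transitivity over each $K(\omega_i)$). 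Second, one needs the twisting construction of Proposition~\ref{prop:geo-sol-rat-place}: from $\theta$ one builds an absolutely irreducible smooth $K$-variety $\widehat W$, finite separable over $\mathbb{A}^n$, whose $K$-rational points correspond exactly to specializations $\mathfrak{p}$ with $\mathfrak{p}^*$ in the $\ker\alpha$-class of $\theta$; only then does the PAC property produce a Zariski dense set of good $\bfa$, via Proposition~\ref{prop:geom-sol-PAC} and Lemma~\ref{lem:factorizatiotype}. Your proposal points at both needs (``package the transitivity conditions into one absolutely irreducible $K$-variety'') but constructs neither $\theta$ nor $\widehat W$, so as it stands it is a correct roadmap of the paper's argument with its pivotal ingredients missing.
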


Note that \eqref{cond:exists_sep_ext} is necessary. Indeed,  assume $f_i(g(t))$ to be separable and irreducible. Choose a root $\eta_i$ of $g(t)-\omega_i$. Then $f(g(\eta_i)) = f_i(\omega_i) = 0$, hence $\eta_i$ is a root of $f_i(g(t))$. Moreover $K(\omega_i) \subseteq K(\eta_i)$ and 
\[
\deg(f_i\circ g)
=[K(\eta_i):K]
=[K(\eta_i):K(\omega_i)][K(\omega_i):K]
\le n\cdot\deg(f_i)
=\deg(f_i\circ g),
\]
so $[K(\eta_i):K]=n\cdot \deg f$ and $[K(\eta_i):K(\omega_i)]=\deg(g(t))=n$, as needed.

Theorem~\ref{thm:Ans_PAC} is more than an analog of Theorem~\ref{thm:large-finite-arithmetic-type}, it actually implies it, as explained below. Moreover, Theorem~\ref{thm:Ans_PAC} contains the `wild' case in odd characteristic, i.e., when $2\neq p \mid n$, and the `tame' case is characteristic $2$, i.e., when $p=2$ and $n$ is odd. Therefore it strengthens Theorem~\ref{thm:large-finite-arithmetic-type} to these important cases. 

A \textbf{pseudo finite} field $K$ is defined to be a perfect PAC field with $\gal(K) \cong \widehat{\mathbb{Z}}$. The latter condition implies in particular  \eqref{cond:exists_sep_ext} for any $\omega_i$. So we get the following 

\begin{corollary}\label{cor:pseudo_finite}
Let $n,B>0$ be fixed. Then any pseudo finite field $K$ of characteristic $p\geq 0$ satisfies the following elementary statement. For every irreducible polynomials $f_1(X), \ldots, f_r(X)\in K[X]$ satisfying $\sum \deg(f_i)\leq B$ there exists a Zariski dense set of $(a_1, \ldots, a_n) \in K^n$ such that, for $g(t) = t^n + a_1 t_{n-1} + \cdots + a_n$, all of the $f_i(g)$ are irreducible, \emph{provided} $n$  is odd if $p=2$. 
\end{corollary}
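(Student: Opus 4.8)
The plan is to deduce the corollary directly from Theorem~\ref{thm:Ans_PAC}, the only work being to check that the hypotheses of that theorem hold automatically for a pseudo finite field. First I would reduce to the non-associate case: if two of the given polynomials differ by a nonzero scalar, say $f_j = c f_i$ with $c\in K^\times$, then $f_j(g) = c\,f_i(g)$, so $f_j(g)$ is irreducible exactly when $f_i(g)$ is; discarding such repetitions, I may assume $f_1, \ldots, f_r$ are pairwise non-associate. Since a pseudo finite field is by definition perfect, every irreducible polynomial over $K$ is separable, so the $f_i$ are separable and each root $\omega_i$ generates a finite separable extension $K(\omega_i)/K$. Thus all the algebraic hypotheses of Theorem~\ref{thm:Ans_PAC} except~\eqref{cond:exists_sep_ext} are in place, and the arithmetic restriction that $n$ be odd when $p=2$ is carried over verbatim into the corollary.

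The crux is to verify condition~\eqref{cond:exists_sep_ext}, and here I would invoke the remaining defining property $\gal(K) \cong \widehat{\mathbb{Z}}$. For each $i$ the field $K(\omega_i)$ is a finite extension of $K$, so its absolute Galois group $\gal(K(\omega_i))$ is an open subgroup of $\widehat{\mathbb{Z}}$; every open subgroup of $\widehat{\mathbb{Z}}$ has the form $m\widehat{\mathbb{Z}}$ and is again topologically isomorphic to $\widehat{\mathbb{Z}}$. Since $\widehat{\mathbb{Z}}$ admits $\mathbb{Z}/n\mathbb{Z}$ as a quotient for every $n$, the field $K(\omega_i)$ carries a cyclic separable extension of degree $n$, so~\eqref{cond:exists_sep_ext} holds for every $n$, as already remarked after the definition of pseudo finite. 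Feeding these data into Theorem~\ref{thm:Ans_PAC} produces a Zariski dense set of tuples $(a_1, \ldots, a_n)$ for which all $f_i(g)$ are separable and irreducible, which is precisely the assertion.

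Because the substantive mathematics lives entirely in Theorem~\ref{thm:Ans_PAC}, no genuine obstacle remains; the one point deserving care is the word \emph{elementary}. For fixed $n$ and $B$, the heart of the conclusion—the existence of at least one admissible tuple—is a first-order sentence in the language of rings, with the coefficients of the $f_i$ as parameters: irreducibility of a polynomial of bounded degree is the negation of an existential factorization statement, and ``there exist $a_1,\dots,a_n$'' is an ordinary existential quantifier. Spelling out this first-order formulation is the only mildly technical step, and it is exactly what makes the corollary useful beyond PAC fields: via Ax's characterization of pseudo finite fields as the infinite models of the theory of finite fields, the elementary version transfers to all sufficiently large finite fields, which is how Theorem~\ref{thm:Ans_PAC} comes to imply Theorem~\ref{thm:large-finite-arithmetic-type}.
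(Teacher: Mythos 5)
Your proposal is correct and is essentially the paper's own argument: the paper deduces the corollary from Theorem~\ref{thm:Ans_PAC} by observing that a pseudo finite field is perfect and PAC and that $\gal(K)\cong\widehat{\mathbb{Z}}$ forces condition~\eqref{cond:exists_sep_ext}, exactly as you verify via open subgroups $m\widehat{\mathbb{Z}}\cong\widehat{\mathbb{Z}}$ and their $\mathbb{Z}/n\mathbb{Z}$ quotients. Your added details (discarding associate polynomials, and noting that only the existence part of the conclusion is used as the first-order sentence transferred by Ax's theorem) are sound refinements of points the paper leaves implicit.
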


In \cite{Ax1968} Ax proves that an elementary statement is true for almost all finite fields if and only if it is true for all pseudo finite fields (see also \cite[\S20.10]{FriedJarden2008}). Therefore, an immediate consequence is that Corollary~\ref{cor:pseudo_finite} implies Theorem~\ref{thm:large-finite-arithmetic-type} in a weak sense, namely, it gives existence of $g$'s, but not the mentioned asymptotic. But in fact, while proving the theorem, a more technical statement is proved, from it Theorem~\ref{thm:large-finite-arithmetic-type} follows in its full strength using the Lang-Weil estimates, including the case $p=2$ and $n$ is odd, and the case $2\neq p\mid n$:

\begin{theorem}\label{thm:large-finite-arithmetic-type-2}
Let $n,B$ be positive integers, $p$ a prime, $q$ a power of $p$, $f_1(X), \ldots, f_r(X) \in \mathbb{F}_q[X]$ non-associate irreducible polynomials such that $\sum \deg(f_i)\leq B$. Assume $n$ is odd if $p=2$. Then the number of degree $n$ monic $g(t) \in \mathbb{F}_q[t]$ for which all of the $f_i(g(t))$ are irreducible in $\mathbb{F}_q[t]$ is 
\[
\frac{q^n}{n^r} + O_{n,B}(q^{n-\frac{1}{2}}).
\]
\end{theorem}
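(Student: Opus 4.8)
The plan is to deduce Theorem~\ref{thm:large-finite-arithmetic-type-2} from the PAC machinery underlying Theorem~\ref{thm:Ans_PAC}, combined with the Lang-Weil estimates, rather than from the soft transfer via Ax's theorem (which, as the text notes, only gives existence, not the asymptotic). Let me think carefully about what this involves.

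We want to count degree-$n$ monic $g(t)\in\mathbb{F}_q[t]$ such that all $f_i(g(t))$ are irreducible. The parameter space is $(a_1,\ldots,a_n)\in\mathbb{A}^n$ parametrizing $g(t)=t^n+a_1t^{n-1}+\cdots+a_n$. The main term $q^n/n^r$ strongly suggests that the "probability" that a single $f_i(g(t))$ (of degree $n\cdot\deg f_i$) is irreducible is $1/n$, and these events are asymptotically independent across $i=1,\ldots,r$.

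So the key is a Galois-theoretic / geometric interpretation. For a random monic $g$, the polynomial $f_i(g(t))$ is irreducible iff the Galois group of $f_i(g(t))$ acts transitively on its roots. The roots of $f_i(g(t))$ are the $\eta$ with $g(\eta)=\omega_i^{(j)}$ for the various roots $\omega_i^{(j)}$ of $f_i$. The relevant Galois group is that of the splitting field over $\mathbb{F}_q(a_1,\ldots,a_n)$ — i.e., the generic Galois group of the family. The irreducibility theorem à la Hilbert mentioned in the abstract presumably computes this generic monodromy group and identifies the subvariety of $\mathbb{A}^n$ (a union of cosets / a constructible set) corresponding to each possible factorization type.

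The structure of the argument: First, I'd invoke the algebraic-geometric statement (established in proving the PAC theorem) that identifies, over $\overline{\mathbb{F}_q}$, the generic Galois group $G$ of the compositum and shows that the locus where $f_i(g(t))$ is irreducible corresponds to a geometrically irreducible subvariety (or a union thereof with controlled Frobenius action) of $\mathbb{A}^n_{\mathbb{F}_q}$. By the Chebotarev/Lang-Weil philosophy, the proportion of $\mathbb{F}_q$-points $g$ for which the Frobenius conjugacy class lands in the subset of $G$ producing a single orbit on the roots of $f_i\circ g$ is the density of that subset, which works out to $1/n$ per factor. Then I would express the count as a sum over conjugacy classes (or over factorization types), with the main contribution $q^n/n^r$ coming from the "all irreducible" classes and the error controlled geometrically.

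Concretely, I expect the main body to already contain a precise counting statement in terms of a Galois cover $\phi:Y\to\mathbb{A}^n$ with group $G$, and the theorem follows by applying an explicit form of the Chebotarev density theorem for function fields (equivalently, Lang-Weil applied to the relevant components of $Y$ and inclusion-exclusion) so that
\[
\#\{g : \text{all } f_i(g)\text{ irreducible}\}
=\frac{|C|}{|G|}\,q^n+O_{n,B}(q^{n-\frac12}),
\]
where $C\subseteq G$ is the union of conjugacy classes whose cycle structure on each block of roots is a single $n$-cycle, and a short group-theoretic computation gives $|C|/|G|=n^{-r}$. The Lang-Weil error term $O(q^{n-1/2})$ comes from the standard estimate $\#V(\mathbb{F}_q)=q^{\dim V}+O(q^{\dim V-1/2})$ for each geometrically irreducible component, with the implied constant depending only on the degree and number of components, hence only on $n$ and $B=\sum\deg f_i$.

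The main obstacle will be twofold. First, establishing that the relevant Galois group $G$ is genuinely the full expected generic monodromy group and, crucially, that the cover is \emph{geometrically} connected over $\overline{\mathbb{F}_q}$ so that Lang-Weil applies with the correct leading constant rather than a smaller one — this is exactly where the hypothesis that $n$ is odd when $p=2$ (tame case in characteristic $2$) and the wild case $2\neq p\mid n$ must be handled, since the geometric monodromy can shrink under inseparability or wild ramification, and this is presumably the content of the Hilbert-irreducibility-type theorem. Second, converting the generic irreducibility into the density $1/n$ per factor requires a careful group-theoretic count of the conjugacy classes acting as a single $n$-cycle on each block together with verification that the blocks corresponding to distinct non-associate $f_i$ behave independently (i.e.\ the Galois group splits as a suitable product or at least the relevant class densities multiply); controlling the independence and the uniformity of the Lang-Weil constant in $n$ and $B$ is the delicate quantitative heart of the proof.
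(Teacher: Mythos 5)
Your skeleton --- compute the generic monodromy of $f\circ g$, then count specializations by an effective Chebotarev/Lang--Weil argument over $\mathbb{F}_q$ --- is a legitimate route, and it differs from the paper's, which instead proves a technical statement over pseudo finite fields (Theorem~\ref{thm:pseudo-finite}), transfers it to large finite fields by Ax's theorem, and applies Lang--Weil to an absolutely irreducible \emph{twisted} cover $\widehat{W}\to\mathbb{A}^n$ whose fibers over the good specializations contain exactly $n^r$ rational points. However, your proposal breaks down at its quantitative heart. Since the $f_i$ are irreducible of degree possibly greater than $1$, the splitting field $F$ of $f\circ g$ over $\mathbb{F}_q(\mathbf{A})$ has field of constants $L$, the splitting field of $f=f_1\cdots f_r$; hence the cover $Y\to\mathbb{A}^n$ is \emph{not} geometrically connected whenever some $\deg f_i>1$, and the geometric connectedness you propose to establish is simply false. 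What is true (Proposition~\ref{prop:gene-comp-wreath}) is that $F$ is regular over $L$, i.e., the geometric monodromy is $S_n^\Omega=\ker\alpha$ inside the arithmetic monodromy $G=S_n\wr_\Omega\gal(L/\mathbb{F}_q)$. In this situation the Frobenius classes of $\mathbb{F}_q$-points land only in the coset $S_n^\Omega\,\sigma$, where $\sigma$ is the Frobenius generator of $\gal(L/\mathbb{F}_q)$, and the correct main term is $\frac{|T|}{|S_n^\Omega|}\,q^n$, with $T$ the set of elements of that coset acting transitively on each block $[n]\times\Omega_i$ --- not $\frac{|C|}{|G|}\,q^n$ with $C$ the union of all transitive classes of $G$. (Under the alternative reading where $G$ means the geometric group $S_n^\Omega$ itself, $C$ would be empty, since elements of $S_n^\Omega$ preserve each $[n]\times\{\omega\}$; either way the coset structure cannot be avoided.)

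This is not a normalization quibble: your constant comes out wrong. Take $r=1$, $f_1=X^2+1$, $q\equiv 3\pmod 4$, so $L=\mathbb{F}_{q^2}$ and $G=S_n\wr_\Omega(\mathbb{Z}/2\mathbb{Z})$ with $|\Omega|=2$. Elements of the identity coset $S_n\times S_n$ are never transitive, while an element $(\eta_1,\eta_2)\sigma$ of the nontrivial coset is transitive if and only if $\eta_1\eta_2$ is an $n$-cycle, giving $|C|=|T|=(n!)^2/n$. Your formula then yields $\frac{|C|}{|G|}q^n=\frac{q^n}{2n}$, off by a factor of $2$ from the correct $\frac{q^n}{n}$ (in general the discrepancy involves Euler-$\varphi$-type factors depending on the $\deg f_i$). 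The missing ingredient is exactly the paper's Lemma~\ref{lem:ncyclesinwrproduct}: counting the transitive elements inside the single coset $S_n^\Omega\sigma$ gives $|T|=(n!)^\nu/n^r$, hence density $1/n^r$ relative to $|S_n^\Omega|=(n!)^\nu$. Geometrically, restricting to this coset is what the paper's twist $\widehat{W}$ (Proposition~\ref{prop:geo-sol-rat-place}) accomplishes: it produces an absolutely irreducible variety to which Lang--Weil legitimately applies with leading constant $1$, after which one divides by the fiber size $n^r$. With the coset version of Chebotarev (or the twisting construction) in place, your approach goes through; as written, it does not.
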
 

An interesting special case is when $f_i = X + i -1$:
\begin{corollary}
Assume $q \gg r,n$ and that $n$ is odd if $q$ is even. Then there exists a monic polynomial $g(t) \in \mathbb{F}_q[t]$ of degree $n$ such that $g,g+1, \cdots, g+r-1$ are irreducible. 
\end{corollary}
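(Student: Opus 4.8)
The plan is to obtain the corollary as a direct specialization of Theorem~\ref{thm:large-finite-arithmetic-type-2}, applied to the linear shift polynomials. Set $f_i(X) = X + (i-1)$ for $i = 1, \ldots, r$, so that $f_i(g(t)) = g(t) + (i-1)$ and the condition ``$g, g+1, \ldots, g+r-1$ are irreducible'' becomes exactly ``$f_i(g)$ is irreducible for all $i$''. Each $f_i$ is linear, hence irreducible in $\mathbb{F}_q[t]$. To feed these into the theorem I need them pairwise non-associate; since they are monic of degree one, this holds precisely when the residues of $0, 1, \ldots, r-1$ are distinct in $\mathbb{F}_q$. When the characteristic exceeds $r-1$ this is automatic; in general I would simply keep one $f_i$ per distinct residue, obtaining $s \le r$ non-associate linear polynomials $X + c_1, \ldots, X + c_s$ whose shift set $\{c_1, \ldots, c_s\}$ equals the image of $\{0, \ldots, r-1\}$, and note that $g + c$ is irreducible for every $c$ in this image iff all of $g, g+1, \ldots, g+r-1$ are. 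Either way, $\sum_j \deg(f_j) \le r$, so I may take $B = r$, and the parity hypothesis ($n$ odd when $q$ is even) is exactly the one Theorem~\ref{thm:large-finite-arithmetic-type-2} requires, since $q$ is even iff $p = 2$.

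With this data in place, Theorem~\ref{thm:large-finite-arithmetic-type-2} asserts that the number $N$ of monic degree-$n$ polynomials $g \in \mathbb{F}_q[t]$ with all $f_j(g)$ irreducible satisfies
\[
N = \frac{q^n}{n^s} + O_{n,r}\!\left(q^{\,n-\frac12}\right).
\]
The remaining step is to verify $N > 0$ for $q$ large. Writing the implied constant as $C = C(n,r)$, I have $N \ge q^n\big(n^{-s} - C q^{-1/2}\big)$, and the factor in parentheses is positive once $q^{1/2} > C\,n^{s}$. Because $C$, $n$, and $s \le r$ depend only on $n$ and $r$, this inequality holds whenever $q$ is large relative to $n$ and $r$ --- precisely the standing assumption $q \gg r, n$. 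A positive value of $N$ produces at least one polynomial $g$ with the required property, proving the corollary.

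I do not expect a genuinely hard step: the arithmetic substance is carried entirely by Theorem~\ref{thm:large-finite-arithmetic-type-2}, and what remains is to check that the linear $f_i$ meet its hypotheses and that the main term $q^n/n^s$ eventually dominates the error term $O_{n,r}(q^{n-1/2})$. The only point meriting care is the requirement that the $f_i$ be pairwise non-associate in small characteristic, handled above by passing to distinct residues; after that, irreducibility of the linear $f_i$, the degree bound $B = r$, and the parity condition are all immediate.
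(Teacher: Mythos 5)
Your proof is correct and is essentially the paper's intended argument: the corollary is presented there as the special case $f_i = X + i - 1$ of Theorem~\ref{thm:large-finite-arithmetic-type-2}, with existence following because the main term $q^n/n^r$ dominates the error term once $q$ is large relative to $n$ and $r$. Your additional care about the $f_i$ being pairwise non-associate in small characteristic (passing to the $s \le r$ distinct residues of $0,\ldots,r-1$ in $\mathbb{F}_q$) addresses a point the paper glosses over, and you handle it correctly.
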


When $r=1$, Pollack proves a result over small finite fields: Over a finite field $\mathbb{F}_q$ there are infinitely many $g$ such that $g, g+1$ are irreducible \cite[Theorem~4]{Pollack2008}. 

Another interesting case is $f_1 = X^2 +1$, which can be considered as an analog of Landau's problem. Recall that $X^2+1$ is irreducible in $\mathbb{F}_q$ if and only if $q\equiv 1 \mod 4$.

\begin{corollary}
Assume $q \gg n$ and that $n$ is odd if $q$ is even. Then there exists a monic polynomial $g(t)\in \mathbb{F}_q[t]$ of degree $n$ such that $g(t)^2+1$ is irreducible.
\end{corollary}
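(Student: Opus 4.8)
The plan is to recognize this corollary as the single-polynomial instance $r=1$, $f_1(X)=X^2+1$ of Theorem~\ref{thm:large-finite-arithmetic-type-2}, and then simply to read off the existence of $g$ from the positivity of the leading term of the asymptotic count. Accordingly I would take $B=2=\deg f_1$, so that $\sum\deg(f_i)=2\le B$; non-associativity is vacuous, there being only one polynomial.

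The one hypothesis that is not automatic is that $f_1=X^2+1$ be irreducible over $\mathbb{F}_q$. By the criterion recalled just before the statement this holds precisely when $q$ lies in the relevant congruence class modulo $4$; in particular $q$ is then odd, the characteristic is not $2$, and the clause ``$n$ odd if $q$ even'' is vacuously satisfied. This step is not mere bookkeeping: in characteristic $2$ one has $X^2+1=(X+1)^2$, so that $g^2+1=(g+1)^2$ is a square for every $g$ and the assertion genuinely fails; thus the irreducibility of $X^2+1$ is essential and forces $q$ odd. Since $\mathbb{F}_q$ is perfect there is nothing further to verify---irreducibility of $X^2+1$ already entails separability.

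With the hypotheses checked, Theorem~\ref{thm:large-finite-arithmetic-type-2} applied with $r=1$ yields that the number of monic $g\in\mathbb{F}_q[t]$ of degree $n$ for which $g^2+1$ is irreducible equals $q^n/n+O_n(q^{n-1/2})$. Finally I would pass from this count to existence: the main term $q^n/n$ dominates the error $O_n(q^{n-1/2})$ as soon as $q^{1/2}$ exceeds the ($n$-dependent) implied constant times $n$, which is exactly what the hypothesis $q\gg n$ abbreviates; the count is then strictly positive, so at least one such $g$ exists.

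I do not expect any substantial obstacle: all of the analytic content has already been absorbed into Theorem~\ref{thm:large-finite-arithmetic-type-2}, and the corollary is a pure specialization. The only point demanding care is the irreducibility of $X^2+1$, i.e.\ pinning down the correct congruence and characteristic condition on $q$; once that is granted, the proof is a one-line substitution followed by the elementary remark that a positive main term forces a nonempty solution set.
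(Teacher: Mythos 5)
Correct, and this is precisely the paper's own (implicit) argument: the corollary is stated as an immediate specialization of Theorem~\ref{thm:large-finite-arithmetic-type-2} to $r=1$, $f_1=X^2+1$, with the irreducibility criterion recalled just before the statement supplying the one non-automatic hypothesis, and with existence read off from the positivity of the main term $q^n/n$ once $q\gg n$. Your refusal to commit to a specific congruence class is wise: the paper's recalled criterion is in fact stated backwards ($X^2+1$ is irreducible over $\mathbb{F}_q$ exactly when $q\equiv 3\pmod 4$, not $q\equiv 1\pmod 4$), though this does not affect the argument.
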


A natural way to try to prove Theorem~\ref{thm:Ans_PAC} is the following. (For simplicity assume $r=1$ and $f=f_1$.) Let $\mathcal{G}(\bfA,t) = t^n + A_1 t^{n-1} + A_{2} t^{n-2} + \cdots + A_n$ be a generic polynomial, i.e., $\bfA=(A_1, \ldots, A_n)$ is an $n$-tuple of  algebraically independent variables. Consider the polynomial $\mathcal{F}(\bfA,t) = f(\mathcal{G}(\bfA,t))$. It is easy to show that this polynomial is irreducible. So the proof reduces to the following question: Can we specialize $\bfA \mapsto \bfa \in K^n$ so that irreducibility is preserved? 

If $K$ is a number field (more generally, Hilbertian field), then Hilbert's irreducibility theorem gives the desired irreducible specialization. On the other contrast, if $K$ is algebraically closed, it is obvious that  no  irreducible specialization exists, if the $t$-degree of $\mathcal{F}$ is greater than $1$. 

We prove here a \emph{weak Hilbert's irreducibility} theorem for PAC fields, that gives a necessary and sufficient condition for a polynomial over a PAC field to have a Zariski dense set of irreducible specializations (in fact we prove a more general result, see Theorem~\ref{thm:HITPAC}).  The condition is given in terms of embedding problems. 

In order to apply the weak Hilbert's irreducibility theorem in our situation we have  to calculate the Galois group of $\mathcal{F}(\bfA, t)$. We show that for a positive integer $n$, odd if $p=2$, we have 
\[
\gal(\mathcal{F}, K(\bfA)) \cong S_n \wr_\Omega \gal(f,K).
\]
Here $\Omega$ is the set of roots of $f$, $\gal(f,K)$ acts on $\Omega$ in a natural way, and $S_n\wr_\Omega \gal(f,K)$ is the permutational wreath product. It is a little bit technical to show that this calculation is equivalent to the following  result, which may be of interest by itself:

\begin{proposition}
\label{prop:prod-sym-grps-oddchar}
Let $\tilde{K}$ be an algebraically closed field of characteristic $p\geq 0$, let $n$ be a positive integer, odd if $p=2$, let $\Omega\subseteq \tilde{K}$ be a finite set, let $\mathcal{G}(\bfA,t) = t^n + A_1 t^{n-1} + A_{2} t^{n-2} + \cdots + A_n$ be a generic polynomial. Then the splitting fields $F_\omega$ of $\mathcal{G}-\omega$, $\omega\in \Omega$ are linearly disjoint over $\tilde{K}(\bfA)$ and 
\[\gal\Big( \prod_{\omega\in \Omega}(\mathcal{G} -\omega)\Big) \cong S_n^\Omega.\]  
\end{proposition}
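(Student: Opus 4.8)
The plan is to freeze all coefficients but the constant term. Put $K_0=\tilde K(A_1,\dots,A_{n-1})$ and $u=A_n$, so that $L:=\tilde K(\bfA)=K_0(u)$ and $\mathcal{G}-\omega=h(t)+u-\omega$ with $h(t)=t^n+A_1t^{n-1}+\dots+A_{n-1}t\in K_0[t]$. Then $F_\omega$ is the Galois closure over $K_0(u)$ of the degree-$n$ cover $h(t)=\omega-u$ of the affine $u$-line; equivalently it is the pullback of the Galois closure of the fixed cover $h\colon\mathbb{A}^1_t\to\mathbb{A}^1_v$ along the isomorphism $u\mapsto v=\omega-u$. Before anything else I would record, with no hypothesis on the characteristic, that $\gal(F_\omega/L)\cong S_n$: writing $x_1,\dots,x_n$ for the roots of $\mathcal{G}-\omega$, their elementary symmetric functions are $\pm A_1,\dots,\pm A_{n-1},\pm(A_n-\omega)$, hence algebraically independent over $\tilde K$; so the $x_i$ are algebraically independent, in particular distinct (whence $\mathcal{G}-\omega$ is separable), and $S_n$ acts faithfully on $\tilde K(x_1,\dots,x_n)$ with fixed field exactly $L$, so Artin's theorem applies.

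The geometric heart is a branch-locus computation carried out over $\overline{K_0}$. Let $\rho_1,\dots,\rho_m$ be the critical values $h(\beta)$ as $\beta$ runs over the roots of $h'$. The first step is to check that for generic $A_1,\dots,A_{n-1}$ the cover is as nondegenerate as possible: $h'$ is separable (so every critical point is simple and each finite branch point is a genuine two-sheeted, i.e.\ transposition, branch point), the $\rho_i$ are distinct, and moreover no two of them differ by an element of $\tilde K$. Granting this, the finite branch locus of $F_\omega$ on the $u$-line is $\{\omega-\rho_i\}$, and $\omega-\rho_i=\omega'-\rho_j$ would force $\rho_i-\rho_j=\omega'-\omega\in\tilde K$; hence the branch loci of distinct $F_\omega$ are disjoint and non-empty.

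When $p\nmid n$ this already finishes the proof cleanly. The inertia at a finite branch point $\omega-\rho_i$ is a transposition acting only on the sheets of the $\omega$-th factor, the other factors being unramified there; so in $\gal(E/L)\subseteq\prod_{\omega}S_n$, where $E=\prod_\omega F_\omega$ is the compositum, this transposition $\tau_i^{(\omega)}$ lies in the $\omega$-th copy of $S_n$. From the relation $\gamma_1\cdots\gamma_{n-1}\cdot\gamma_\infty=1$ in the tame fundamental group of the punctured $v$-line (here $m=n-1$ since $p\nmid n$), the product of the $\tau_i^{(\omega)}$ equals the inverse of the inertia at infinity, which is an $n$-cycle; thus the $\tau_i^{(\omega)}$ generate a transitive subgroup of $S_n$ that is generated by transpositions, namely all of $S_n$. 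As these single-factor elements lie in $\gal(E/L)$ for every $\omega$, we get $\prod_\omega S_n\subseteq\gal(E/L)$, forcing equality, which is precisely linear disjointness together with $\gal\big(\prod_\omega(\mathcal{G}-\omega)\big)\cong S_n^\Omega$.

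The main obstacle is positive characteristic, and it is exactly here that the hypothesis ``$n$ odd if $p=2$'' is forced. In characteristic $2$ with $n$ even one computes $h'(t)=A_1t^{n-2}+A_3t^{n-4}+\cdots$, a polynomial in $t^2$ and hence a perfect square over the algebraically closed $\tilde K$; its roots are then all multiple, the critical points collide, the branching is no longer simple, and in fact the statement becomes \emph{false} (already for $n=2$ the two factors become Artin--Schreier extensions whose invariants differ by a constant, so they coincide geometrically and linear disjointness fails). For $p\nmid n$ the leading term $nt^{n-1}$ of $h'$ survives and the nondegeneracy of the first step is verified by exhibiting one good specialization of $(A_1,\dots,A_{n-1})$. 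The remaining wild case $p\mid n$ with $p$ odd must be handled separately, since then there are too few finite critical points for the inertia argument above; there I would split $\gal(E/L)$ along the sign map $\prod_\omega S_n\to(\mathbb{Z}/2)^{\Omega}$, controlling the kernel part by the pairwise disjointness $F_\omega\cap F_{\omega'}=L$ together with the (near-)simplicity of the groups $A_n$, and the abelian quotient by the independence of the quadratic subextensions $L(\sqrt{d_\omega})$, which holds because $\prod_{\omega\in S}d_\omega$ is squarefree for every non-empty $S$ (its roots being the distinct points $\omega-\rho_i$). The genuinely delicate point throughout is this independence of the quadratic subextensions, whose failure in characteristic $2$ for even $n$ is the reason for the hypothesis.
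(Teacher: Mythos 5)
There is a genuine gap, and it sits exactly at the case $p=2$, $n$ odd, which is one of the two cases the paper is specifically designed to capture. You claim that for every $p\nmid n$ the nondegeneracy of your branch-locus picture (separable $h'$, simple critical points, transposition inertia, tame product relation) can be arranged generically because ``the leading term $nt^{n-1}$ of $h'$ survives''. This fails identically in characteristic $2$ with $n$ odd: every coefficient $(n-j)A_j$ with $j$ odd vanishes (as $n-j$ is then even), so
\[
h'(t)\;=\;t^{n-1}+A_2t^{n-3}+A_4t^{n-5}+\cdots+A_{n-1}
\]
has only even exponents, i.e.\ it is a polynomial in $t^2$ and hence the perfect square $\bigl(t^{(n-1)/2}+\sqrt{A_2}\,t^{(n-3)/2}+\cdots+\sqrt{A_{n-1}}\bigr)^{2}$ over $\overline{K_0}$ --- for \emph{all} values of the $A_i$, so no ``good specialization'' exists. (This is the same computation you carried out for $n$ even; you did not run it for $n$ odd.) Consequently no finite critical point is simple; and even at a point of ramification index $2$, in characteristic $2$ that ramification is \emph{wild}, so inertia is not generated by a tame local generator mapping to a transposition, and the relation $\gamma_1\cdots\gamma_m\gamma_\infty=1$ in a tame fundamental group is unavailable (the fundamental group of the affine line in characteristic $p$ admits no such presentation). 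Your fallback for wild cases also collapses here: in characteristic $2$ the quadratic subextensions are Artin--Schreier, not Kummer, so square classes of discriminants and the squarefreeness of $\prod_{\omega\in S}d_\omega$ carry no information. The paper treats precisely this case by a separate argument built on the fact you missed: since $\partial g/\partial X$ is a square with $(n-1)/2$ distinct double roots $u_j$, one shows the finite branch loci $\{g_i(u_j)\}_j$ of the distinct shifts are pairwise disjoint, and then tameness at infinity (here is where $n$ odd enters) forces linear disjointness.

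For the other cases your outline is in varying states of health. When $p\nmid 2n$ your tame $\pi_1$/transposition argument is correct in outline and genuinely different from the paper's route, which never looks at inertia over finite branch points: the paper reduces linear disjointness of the $F_\omega$ to linear independence of discriminant (or Artin--Schreier) classes via a group-theoretic lemma on subgroups of $S_n^r$ surjecting onto $(\mathbb{Z}/2\mathbb{Z})^r$, and then verifies that independence on explicit specializations such as $X^n-T$, $X^n-SX^{n-1}-T$ and $X^n+\frac12X^2+T$. For the wild case $p\mid n$, $p$ odd, your sketch is in the same spirit as the paper's actual proof, but the decisive facts (separability of $h'$, distinctness of critical values, no two differing by a constant, hence squarefreeness and disjointness of the roots of the $d_\omega$) are asserted rather than proved; note also that quadratic independence together with surjectivity of each factor already suffices by the paper's Lemma~\ref{lem:alt-sym}, so the pairwise disjointness $F_\omega\cap F_{\omega'}=L$ you propose to invoke is neither available a priori nor needed. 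Those defects are repairable; the characteristic $2$, $n$ odd case is the real hole and needs a new idea.
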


It surprised the author to find out that these results fails when $p=n=2$. In this case the assertion holds true if and only if $\sum_{i=1}^{2\ell} \omega_i \neq 0$ for every \emph{even} number of distinct elements $\omega_1,\ldots, \omega_{2\ell}\in \Omega$. If $n = 2k > 2$, we suspect this result to fail, but we do not know to prove it, or  to give an exact condition for it to hold, as we had for $n=2$. From this it follows that Theorem~\ref{thm:Ans_PAC}, and hence Theorem~\ref{thm:large-finite-arithmetic-type-2} holds true when $p=n=2$.

We conclude the introduction with a remark. It is interesting to consider Schinzel's Hypothesis H for polynomial rings over other fields. This can be done for fields having a \emph{PAC extension} that satisfies \eqref{cond:exists_sep_ext}. Examples of such interesting families of fields can be found in \cite{Bary-Soroker2009PAMS, Bary-SorokerKelmer}. This will be dealt somewhere else. 

\subsection*{Acknowledgments}
I thanks Moshe Jarden for pointing out several mistakes in an earlier version, for many remarks and suggestions that contributed to the presentation of paper, Peter M\"uller for letting me know about Pollack's work, and Wulf-Dieter Geyer, Dan Haran, Ehud Hrushovski, Christian Kappen, Zeev Rudnick, and Tomer Schlank for helpful discussions.

Part of this work was done while the author was a Lady Davis postdoc fellow in the Hebrew University of Jerusalem. The author is an Alexander von Humboldt postdoc fellow in The Instituts f\"ur Experimentelle Mathematik in Duisburg-Essen University. This research is partially supported by a grant from the ERC.

\section{Weak Hilbert's irreducibility theorem}
\subsection{Geometric embedding problems}
Let $K$ be a field, $V,W$ irreducible smooth affine $K$-varieties , and $\rho\colon W\to V$ a finite separable morphism. Let $R,S$ be the respective rings of regular functions of $V,W$, consider $R$ as a subring of $S$, and let $F/E$ be the corresponding function field extension. 

Assume that $V$ is absolutely irreducible and that $F/E$ is Galois. Then the field of constants of $V$ is $K$ and the field of constants $L=F\cap \tilde K$ of $W$  is Galois over $K$. (Here $\tilde K$ is a fixed algebraic closure of $K$.)  We have the restriction of automorphisms map $\alpha\colon \gal(F/E)\to \gal(L/K)$. This data defines an embedding problem for $K$ that we call a \textbf{geometric embedding problem}: 

\vskip5pt
\begin{minipage}{3.5cm}
$\qquad\mathcal{E}(W/V) :=$ 
\end{minipage}
\begin{minipage}{5cm}

$\xymatrix{
					&\gal(K) \ar[d]^{\pi}\ar@{.>}[dl]_{\theta=\mathfrak{P}^*}\\
\gal(F/E)\ar[r]^-{\alpha}		&\gal(L/K).
}
$
\end{minipage}
\vskip5pt

We note that a geometric embedding problem is a birational object, hence it depends only on the function field extension $F/E$. Therefore it coincides with the geometric embedding problem as defined in \cite{Bary-Soroker2009PACEXT} in terms of function fields. In this work it is more natural to work with varieties since we are interested in irreducible specializations of polynomials.

A \textbf{weak solution} of $\mathcal{E}(W/V)$ is a homomorphism $\theta \colon \gal(K)\to \gal(F/E)$ in the category of profinite groups, i.e.\ continuous. A weak solution is \textbf{proper} if it is  surjective. In this paper we will mainly have weak solutions, hence we decided to abbreviate the term `weak solution' and write in short `solution'. 

Rational points on $V$ induce  solutions in the following way. 
Let $\mathfrak{p}\in V(K)$ be a $K$-rational point of $V$ that is \'etale in $W$ and let $\mathfrak{P}\in \rho^{-1}(\mathfrak{p})$. By abuse of notation we also denote by $\mathfrak{p}$ the corresponding maximal ideal of the local ring $R_{\mathfrak{p}}$ at $\mathfrak{p}$, and we use similar notation for $\mathfrak{P}$. Then we have a homomorphism $\mathfrak{P}^*\colon \gal(K) \to \gal(F/E)$ defined by
\begin{equation}
\label{eq:geo-sol-act}
\Pfrak^*(\sigma)(x) \mod \mathfrak{P} = \sigma (x\mod \mathfrak{P}) , \qquad \mbox{for all } x\in S_{\mathfrak{P}}.
\end{equation}
The map $x\mapsto x\mod \mathfrak{P}$ bijectively maps $L\subseteq S$ to $L\subseteq S_{\mathfrak{P}}/\mathfrak{P}$, hence for $x\in L$ we have $\mathfrak{P}^*(\sigma)(x)  =  \sigma(x)$, so $\pi = \alpha\circ\mathfrak{P}^*$. In other words, $\mathfrak{P}^*$ is a solution of the embedding problem $\mathcal{E}(W/V)$. This solution is continuous since it factors through $\gal(K(\mathfrak{P})/K)$, where $K(\mathfrak{P}) = S_{\mathfrak{P}}/\mathfrak{P}$ is the residue field at $\mathfrak{P}$. A solution that equals $\mathfrak{P}^*$ for some $\mathfrak{P}$ as above is said to be \textbf{geometric}. The question whether a given solution is geometric is difficult, and can be considered as a finite version of Grothendieck's section conjecture. 

We note that if $\Phi$ is a place of $F$ that extends the map $S\to S/\mathfrak{P}$ and such that the residue field of $E$ is $K$, then the geometric solution $\Phi^*$ that is defined in \cite{Bary-Soroker2009PACEXT} coincides with $\mathfrak{P}^*$, since both are defined by the same formula. Therefore, here we present a different formulation of the same notion.  

If $\mathfrak{Q}\in \rho^{-1}(\mathfrak{p})$, then there exists $\tau \in \gal(F/EL) = \ker\alpha$ such that $\mathfrak{Q} = \tau\mathfrak{P}$. We thus get by  \eqref{eq:geo-sol-act}  that $\mathfrak{Q}^* = \tau \mathfrak{P}^* \tau^{-1}$. Vice-versa, every $\tau \mathfrak{P}^* \tau^{-1}$ comes from $\mathfrak{Q}\in \rho^{-1}(\mathfrak{p})$. Therefore it makes sense to define $\mathfrak{p}^*$ to be the $\ker\alpha$-inner-automorphism class  $\{\mathfrak{P}^*\mid \mathfrak{P}\in \rho^{-1}(\mathfrak{p})\}$. We call this class the \textbf{Artin class of geometric solutions}, this name is derived from the special case where $K$ is finite:

Assume that $K$ is a finite field consisting of $q$ elements. Then the absolute Galois group of $K$ is generated by a distinguish element, namely the Frobenius automorphism ${\rm Frob}\colon x\mapsto x^q$. 
For each \'etale $\mathfrak{P}\in \rho^{-1}(\mathfrak{p})$ we set $[W/V,\mathfrak{P}/\mathfrak{p}] = \mathfrak{P}^*({\rm Frob})$ and call $[W/V,\mathfrak{P}/\mathfrak{p}]$ the Frobenius element at $\mathfrak{P}$. Then we define the \textbf{Artin symbol} $(W/V,\mathfrak{p})$ as the conjugacy class of all Frobenius elements of $\mathfrak{P} \in \rho^{-1}(\mathfrak{p})$. 
Then the map $\mathfrak{p}^* \mapsto (W/V,\mathfrak{p})$ is a bijection, when the base field is finite. 

Next we describe special kind of geometric embedding problems that are associated to polynomials.
Let $V$ be be as above and let $f\in R[X]$ be a separable monic polynomial. We let $F$ be the splitting field of $f$  in a fixed algebraic closure of $E$, $\Omega$ the set of all the roots of $f$. Then we let $V_f = {\rm Spec}(S)$, where $S$ is a the integral closure of $R$ in $F$. Let $\rho\colon V_f \to V$ be the corresponding map. Then $\rho$ is a finite separable morphism and $F/E$ is Galois. We call $\mathcal{E}(f,V) := \mathcal{E}(V_f/V)$ the geometric embedding problem \textbf{associated to  $f$}. Note that the Galois group $\gal(F/K(V))$ naturally acts on $\Omega$, so it is a degree $n:=\deg f$ group. 

A rational point, $\mathfrak{p}\in V(K)$, is \'etale in $V_f$ if and only if  the discriminant of $f$ is invertible at $\mathfrak{p}$ \cite[Corollary 3.16]{Milne1980}.  Hence if and only if $f\mod \mathfrak{p}$ is a separable polynomial. 

\subsection{Factorizations of polynomials under a specialization map}

The \textbf{orbit type} of a solution $\theta \colon \gal(K) \to \gal(F/E)$ of $\mathcal{E}(f,V)$ is the partition of $n$ defined by the lengths of the $\theta(\gal(K))$-orbits of the roots of $f$. If there is only one orbit, we say that the solution is \textbf{transitive}. Since the orbit type is invariant under inner-automorphisms, it makes sense to define the orbit type of a class of $\ker\alpha$-inner-automorphisms of solutions to be the orbit type of one solution in the class.

The \textbf{factorization type} of a separable polynomial of degree $n$ is  the partition of $n$ defined by the degrees of  its irreducible factors. A basic fact in Galois theory is that the factorization type of a separable polynomial equals the orbit type of its Galois group.

The next lemma connects the factorization type of a specialized polynomial with the orbit type of the image of a geometric solution of the associated embedding problem. 

\begin{lemma}\label{lem:factorizatiotype}
Let $V = {\rm Spec} (R)$ be an absolutely irreducible smooth affine $K$-variety, let $f(X) \in R[X]$ be a monic separable polynomial, and let $\pfrak\in V(K)$ be \'etale in $V_f$. Then the factorization type of $f \mod \pfrak$ equals the orbit type of $\pfrak^*$. 

In particular, $f\mod \pfrak$ is irreducible if and only if $\pfrak^*$ is transitive.
\end{lemma}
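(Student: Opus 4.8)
The plan is to reduce the statement to the standard Galois-theoretic fact already recalled just above the lemma, namely that the factorization type of a separable polynomial over a field equals the orbit type of its Galois group acting on the roots. Concretely, I would consider the residue field $k := K(\pfrak) = S_\pfrak/\pfrak$ and show that the reduction map identifies $\gal(k/K)$ with the image $\theta(\gal(K))$ of the geometric solution $\theta = \pfrak^*$, in a way that is compatible with the action on roots. Once this identification is in place, applying the standard fact to $\fgag := f \bmod \pfrak \in K[X]$ immediately yields that its factorization type equals the orbit type of $\pfrak^*$.

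First I would set up the reduction map carefully. Fix $\Pfrak \in \rho^{-1}(\pfrak)$, so that $\Pfrak^*$ is one representative of the Artin class $\pfrak^*$. Since $\pfrak$ is \'etale in $V_f$, the discriminant of $f$ is invertible at $\pfrak$, so $\fgag = f \bmod \pfrak$ is separable and its roots are exactly the reductions $\bar\omega := \omega \bmod \Pfrak$ for $\omega \in \Omega$; I would argue that $\omega \mapsto \bar\omega$ is a bijection from $\Omega$ onto the roots of $\fgag$, using that distinct roots of a separable polynomial stay distinct modulo $\Pfrak$ precisely because the discriminant (a product of differences of roots) does not vanish there. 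These roots all lie in the residue field $k$, and since $f$ splits in $F$ the splitting field of $\fgag$ over $K$ is contained in $k$; in fact $k$ is generated over $K$ by these roots, so $k/K$ is Galois and $k$ is the splitting field of $\fgag$.

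Next I would compare the two actions. Defining formula \eqref{eq:geo-sol-act} says $\Pfrak^*(\sigma)(x) \bmod \Pfrak = \sigma(x \bmod \Pfrak)$ for $x \in S_\Pfrak$. Applying this with $x = \omega$ gives that for each root $\omega \in \Omega$, the reduction of $\Pfrak^*(\sigma)(\omega)$ equals $\sigma(\bar\omega)$. In other words, under the bijection $\omega \leftrightarrow \bar\omega$, the permutation of $\Omega$ induced by $\Pfrak^*(\sigma) \in \gal(F/E)$ corresponds exactly to the permutation of the roots of $\fgag$ induced by $\sigma \in \gal(k/K)$ (noting that $\Pfrak^*$ factors through $\gal(k/K)$, so only the image of $\sigma$ in $\gal(k/K)$ matters). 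Consequently the permutation group $\theta(\gal(K)) = \Pfrak^*(\gal(K))$ acting on $\Omega$ is identified, as a permutation group, with $\gal(k/K)$ acting on the roots of $\fgag$. Since the orbit type is a conjugacy invariant, it is the same for every representative of the Artin class, so the orbit type of $\pfrak^*$ is well defined and equals the orbit type of $\gal(\fgag, K)$.

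Finally I would invoke the basic fact recalled above: the orbit type of $\gal(\fgag, K)$ equals the factorization type of $\fgag$. Combining this with the identification of the previous paragraph gives the first claim, that the factorization type of $f \bmod \pfrak$ equals the orbit type of $\pfrak^*$. The ``in particular'' statement is then the special case of a single orbit: $\fgag$ is irreducible iff $\gal(\fgag,K)$ is transitive iff $\pfrak^*$ is transitive. The main obstacle I anticipate is the bookkeeping around the reduction map, specifically verifying cleanly that reduction modulo $\Pfrak$ gives a genuine bijection between $\Omega$ and the roots of $\fgag$ that intertwines the two group actions; the \'etale hypothesis (equivalently, invertibility of the discriminant) is exactly what makes this work, so the care lies in deploying that hypothesis correctly rather than in any deep new idea.
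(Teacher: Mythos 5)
Your proposal is correct and follows essentially the same route as the paper's own proof: choose $\Pfrak\in\rho^{-1}(\pfrak)$, use the \'etale hypothesis (invertibility of the discriminant) to get a bijection between the roots of $f$ and those of $f\bmod\pfrak$, invoke formula \eqref{eq:geo-sol-act} to identify the action of $\Pfrak^*(\gal(K))$ on $\Omega$ with the Galois action on the reduced roots, and conclude by the standard fact that factorization type equals orbit type of the Galois group. The extra bookkeeping you supply (that $k$ is the splitting field of $f\bmod\pfrak$, and that orbit type is invariant under $\ker\alpha$-conjugation, hence well defined on the Artin class) is exactly the detail the paper leaves implicit.
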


\begin{proof}
Choose some $\Pfrak\in \rho^{-1} (\pfrak)$. Since $\mathfrak{p}$ is \'etale in $V_f$, the discriminant of $f$ is invertible at $\pfrak$, so the map $S_{\mathfrak{P}}\to S_{\mathfrak{P}}/\mathfrak{P}$ induces a bijection between the roots of $f$ and the roots of $f\mod \pfrak$. By \eqref{eq:geo-sol-act}, the action of $\gal(K)$ on the roots of $f\mod \pfrak$ coincides with the action of $\Pfrak^*(\gal(K))$ on the roots of $f$. Hence the assertion. 
\end{proof}

In the following result given a geometric embedding problem $\mathcal{E}(W/V)$ and a solution $\theta$ we give a form $\widehat{W}$ of $W$ with a correspondence between rational points on $\widehat{W}$ and points on $W$ inducing $\theta$. 

\begin{proposition}
\label{prop:geo-sol-rat-place}
Consider a geometric embedding problem $\mathcal{E}(W/V)$ as defined at the beginning of this section. 
Let $\theta \colon \gal(K) \to \gal(F/E)$ be a solution of $\mathcal{E}(W/V)$ and let $M = \tilde{K}^{\ker\theta}$ be the solution field. Then 
\begin{enumerate}
\item \label{part_1}
$W\times_K M$ factors to a disjoint union of absolutely irreducible components, let $W_M$ denote one of them.
\item \label{part_2}
 There exists an absolutely irreducible smooth variety $\widehat{W}$ with a diagram of finite separable morphisms
\[
\xymatrix{
W\ar[d]^{\rho}				&W\times_K M\ar[l]_-{{\rm p}}		&W_M \ar@{_(->}[l]_-{\iota}\ar[d]^\nu\\
V							&{\widehat{W}}\ar[l]_{\pi}						&{\widehat{W}}\times_K M \ar[l]_{\hat{\rm p}}
}
\] 	
such that $\deg \pi = |\ker\alpha|$, $\nu$ is an isomorphism, and for any $\mathfrak{Q}\in W_M(\tilde K)$ the following holds: 

\begin{quote}
$\mathfrak{P}^* = \theta$, for $\mathfrak{P}={\rm p} (\iota(\mathfrak{Q}))$ if and only if  $\hat{\mathfrak{P}} \in \widehat{W}(K)$, for $\hat{\mathfrak{P}} = \hat{{\rm p}}(\nu(\mathfrak{Q}))$,
\end{quote} 
provided $\mathfrak{P}$ is \'etale over $V$ (equivalently, $\mathfrak{Q}$ \'etale over $V$). 

\item \label{part_3}
Let $\Theta$ be the $\ker\alpha$-inner-autuomorphism class of $\theta$, and let $U\subseteq V(K)$ be the set of all $\mathfrak{p}\in V(K)$ that are \'etale in $W$ and $\mathfrak{p}^*= \Theta$. Then $\pi(\widehat{W}(K)) = U$ and, for every $\mathfrak{p}\in U$, $|\pi^{-1}(\mathfrak{p}) \cap \widehat{W}(K)| = \frac{|\ker \alpha|}{|\Theta|}$. 
\end{enumerate}
\end{proposition}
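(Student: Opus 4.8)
The plan is to build $\widehat W$ as the $K$-descent of $W_M$ determined, on the level of function fields, by the graph of $\theta$, and then to convert the condition $\Pfrak^*=\theta$ into $K$-rationality of $\hat\Pfrak$ by means of the Frobenius homomorphism of a place. First, since $\theta$ is continuous, $\ker\theta\trianglelefteq\gal(K)$, so $M=\tilde K^{\ker\theta}$ is Galois over $K$ with $\gal(M/K)\cong\theta(\gal(K))$, and the surjectivity of $\alpha\circ\theta=\pi$ forces $L\subseteq M$. The field of constants of $W$ is $L$ and $F/L$ is regular, so passing to $M\supseteq L$ splits off all constants: each geometric component of $W$ is already defined over $M$, and $W\times_K M$ is the disjoint union of $[L:K]$ absolutely irreducible smooth $M$-varieties. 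This proves (a); I fix the component $W_M$ corresponding to the inclusion $L\subseteq\tilde K$, whose function field is $FM=F\cdot M=F\otimes_L M$.

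For (b) I work in a fixed $\tilde E\supseteq F,\tilde K$. The extension $FM/E$ is Galois (the compositum of $F/E$ and the constant extension $EM/E$), has constant field $M$ with $\gal(FM/EM)\cong\ker\alpha$, and restriction to $F$ and to $M$ identifies $\gal(FM/E)$ with the fiber product $G\times_{\gal(L/K)}\gal(M/K)$, where $G=\gal(F/E)$. Writing $\bar\theta\colon\gal(M/K)\xrightarrow{\ \sim\ }\theta(\gal(K))\subseteq G$ for the isomorphism induced by $\theta$, I set $\hat\Delta=\{(\bar\theta(\gamma),\gamma):\gamma\in\gal(M/K)\}$, the graph of $\bar\theta$; the compatibility $\alpha(\bar\theta(\gamma))=\gamma|_L$ is exactly $\alpha\circ\theta=\pi$, so $\hat\Delta\subseteq\gal(FM/E)$. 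Let $\widehat F=(FM)^{\hat\Delta}$. A direct count gives $[\widehat F:E]=|\ker\alpha|$; moreover $\widehat F\cap M=K$ (because $\hat\Delta$ surjects onto $\gal(M/K)$) and $\widehat F\cdot M=FM$ (because $\hat\Delta\cap\gal(FM/M)=1$), so $\widehat F/K$ is regular and $\widehat F\otimes_K M\cong FM$. Taking $\widehat W$ to be the smooth locus of the normalization of $V$ in $\widehat F$ (shrinking $V$ if needed) makes $\widehat W$ absolutely irreducible with $\pi$ finite separable of degree $|\ker\alpha|$, and $\widehat F\otimes_K M\cong FM$ supplies $\nu$; every arrow of the diagram is induced by a function-field inclusion, so it commutes.

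The heart is the equivalence in (b). I may assume $\pfrak=\rho(\Pfrak)\in V(K)$, as otherwise neither side can hold. Let $\Phi\colon FM\to\tilde K$ be the place attached to $\mathfrak Q$: it lies over $\pfrak$, is the inclusion on $M$, restricts to $\Pfrak$ on $F$ and to $\hat\Pfrak$ on $\widehat F$, and is unramified because $\pfrak$ is étale. Transitivity of $\gal(FM/E)$ on the inertia-free places over $\pfrak$ lets me define $\Phi^\sharp\colon\gal(K)\to\gal(FM/E)$ by $\Phi(\Phi^\sharp(\sigma)x)=\sigma(\Phi x)$. Comparing with \eqref{eq:geo-sol-act} shows its $F$-component is $\Pfrak^*$, while evaluating on $M$ shows its $M$-component is $\sigma\mapsto\sigma|_M$, so $\Phi^\sharp(\sigma)=(\Pfrak^*(\sigma),\sigma|_M)$. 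Now $\hat\Pfrak\in\widehat W(K)$ iff its residue field $\Phi(\widehat F)$ equals $K$, and by the unramified residue-degree formula $[\Phi(\widehat F):K]=[D:D\cap\hat\Delta]$, with $D=\Phi^\sharp(\gal(K))$ the decomposition group, this holds iff $D\subseteq\hat\Delta$. Since $\hat\Delta$ is the graph of $\bar\theta$, the membership $\Phi^\sharp(\sigma)\in\hat\Delta$ says precisely $\Pfrak^*(\sigma)=\bar\theta(\sigma|_M)=\theta(\sigma)$; hence $D\subseteq\hat\Delta\iff\Pfrak^*=\theta$, which is (b).

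Part (c) then follows by counting, and is where I will assemble everything. Via $p\circ\iota$ the fiber $W_M(\tilde K)$ over $\pfrak$ is identified with those $\Pfrak\in\rho^{-1}(\pfrak)$ inducing the inclusion on $L$, on which $\ker\alpha$ acts simply transitively with $\mathfrak Q^*=\tau\Pfrak^*\tau^{-1}$; so exactly $|\ker\alpha|/|\Theta|$ of them satisfy $\Pfrak^*=\theta$ when $\theta\in\pfrak^*$, i.e.\ $\pfrak\in U$, and none otherwise. Transporting along the bijection $\hat p\circ\nu\colon W_M(\tilde K)\xrightarrow{\ \sim\ }\widehat W(\tilde K)$ and applying (b) matches these with the $K$-points of $\widehat W$ over $\pfrak$, yielding $\pi(\widehat W(K))=U$ and $|\pi^{-1}(\pfrak)\cap\widehat W(K)|=|\ker\alpha|/|\Theta|$. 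The hard part throughout is the dictionary of the third paragraph — identifying the two projections of $\Phi^\sharp$ with $\Pfrak^*$ and with restriction to $M$, and justifying $\Phi(\widehat F)=K\iff D\subseteq\hat\Delta$ over a non-finite residue field — together with the routine but delicate passage from function fields to smooth models that keeps $\pi$, $\nu$ and the étale locus compatible.
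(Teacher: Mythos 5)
Your proof is correct, and its overall skeleton matches the paper's: part (a) by the same splitting-off-of-constants argument (from $\alpha\circ\theta=\pi$ one gets $\ker\theta\subseteq\ker\pi=\gal(L)$, hence $L\subseteq M$), and part (c) by the same $\ker\alpha$-equivariant count, transported through the bijection $\hat{\rm p}\circ\nu$ and part (b), giving the fiber size $|\ker\alpha|/|\Theta|$. The genuine difference is in part (b). The paper outsources the crucial step to \cite[Proposition~3.2]{Bary-Soroker2009PACEXT}: it imports, as a black box, the existence of $\hat E\subseteq FM$ with $\gal(FM/\hat E)$ equal to the graph $\Delta$ of $\bar\theta$ and the equivalence ``$\Phi^*=\theta$ iff $\Psi|_{\hat E}$ is $K$-rational'', and only then translates to integral closures and spectra. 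You instead reconstruct that result inline: you define $\widehat F=(FM)^{\hat\Delta}$ directly inside $\gal(FM/E)\cong \gal(F/E)\times_{\gal(L/K)}\gal(M/K)$, verify regularity and $\widehat F M=FM$ by the same fiber-product computations the paper records as items (6)--(7), and then prove the key equivalence yourself via the decomposition homomorphism $\Phi^\sharp(\sigma)=(\Pfrak^*(\sigma),\sigma|_M)$ together with the unramified residue-degree formula $[\Phi(\widehat F):K]=[D:D\cap\hat\Delta]$, so that $K$-rationality of $\hat\Pfrak$ is equivalent to $D\subseteq\hat\Delta$, which, $\hat\Delta$ being a graph, is exactly $\Pfrak^*=\theta$. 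This buys self-containedness (the paper's proof is unreadable without the cited reference) at the cost of length; the mathematics is the same, since your $\widehat F$ is precisely the field the reference constructs. Two small remarks: your hedge ``shrinking $V$ if needed'' is both unnecessary and mildly dangerous --- unnecessary because smoothness of $\widehat W={\rm Spec}(\hat S)$ is automatic by faithfully flat descent from $\widehat W\times_K M\cong W_M$, which is smooth; dangerous because part (c) quantifies over the original $V(K)$, so an actual shrinking would lose points of $U$. Also, $L\subseteq M$ comes from the factorization $\alpha\circ\theta=\pi$ (kernel containment), not from surjectivity of $\pi$; your conclusion is right but the stated reason is slightly off.
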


\begin{proof}
The kernel $\gal(M)$ of $\theta$ contains the kernel $\gal(L)$ of $\alpha$, so $L\subseteq M$. Therefore $W\times_K M$ factors to absolutely irreducible components. These components are disjoint because $W$ is smooth and isomorphic because the function field extension $F/E$ of $W\to V$ is Galois. Since $W\times_K M = {\rm Spec} (S\otimes_K M)$ and $SM$ is regular over $M$, the canonical map $S\otimes_K M \to SM$ defined by $s\otimes m \mapsto sm$ defines an embedding of the absolutely irreducible smooth variety $W_M = {\rm Spec}(SM)$ into $W\otimes_K M$. So $\iota(W_M)$ is an absolutely irreducible factor of $W\otimes_K M$.

We prove \eqref{part_2} using \cite[Proposition~3.2]{Bary-Soroker2009PACEXT}, where the assertion is proved in the language of function fields: Let $\hat F = FM$. In the proof of  \cite[Proposition~3.2]{Bary-Soroker2009PACEXT} a separable extension $\hat E\subseteq \hat F$ of $E$ is constructed with the following properties: 
\begin{enumerate}\renewcommand{\theenumi}{\arabic{enumi}}
\item \label{en1}
$\hat{E}$ is regular over $K$.
\item \label{en3}
$\gal(\hat{F}/E) \cong G\times_{\bar{G}} H$, where $G=\gal(F/E)$, $\bar{G} = \gal(L/K)$, and $H = \gal(EM/E)\cong  \gal(M/K) \cong \theta(\gal(K))$. So $\theta$ induces an embedding $\bar\theta\colon H\to G$. 
\item \label{en4}
$\gal(\hat{F}/\hat{E}) \cong \Delta = \{(\bar\theta(h),h)\mid h\in H\}$.
\item \label{en5}
A $K$-rational place $\phi$ of $E$ extends to a place of $\Phi$ of $F$  with $\Phi^* = \theta$ if and only if $\phi$ extends to a $K$-rational place $\hat{\Phi}$ of $\Ehat$. In fact, something slightly stronger appears in the proof:
\item\label{en6}
 For a place $\Psi$ of $FM$ that is trivial on $M$ we have $\Phi^* = \theta$, for $\Phi=\Psi|_F$, if and only if, $\hat\Phi = \Psi|_{\hat{E}}$ is $K$-rational. 
\end{enumerate}

From \eqref{en3} and \eqref{en4} we have
\begin{enumerate}
\renewcommand{\theenumi}{\arabic{enumi}}
\addtocounter{enumi}{5}
\item \label{en2'}
$\displaystyle [\hat E : E] =  \frac{|G\times_{\bar G} H|}{|\Delta|} = \frac{|G\times_{\bar G} H|}{|H|}=|\ker \alpha|.$
\end{enumerate}
Since $\Delta \cap (\ker(G\times_{\bar{G}} H \to H)) =1$ and $\ker(G\times_{\bar{G}} H \to H) \cong \gal(\Fhat/EM)$, Galois correspondence implies that 
\begin{enumerate}\renewcommand{\theenumi}{\arabic{enumi}}\addtocounter{enumi}{6}
\item\label{en7}
 $\hat{E} (EM) = \hat{E} M =\hat{F}$.
\end{enumerate}

Recall that $R$ (resp.\ $S$) is the ring of regular functions on $V$ (resp.\ $W$), and since $V,W$ are smooth, $S$ is the integral closure of $R$ in $F$. Let $\hat{S}$ be the integral closure of $R$ in $\hat{E}$. By \eqref{en1} we have that $\hat{S} \otimes_K M \cong \hat{S} M$, and by \eqref{en7} we get that $\hat{S} M = SM$. So we have the following  
ring extension diagram.
\[
\xymatrix{
S\ar[r]^-{{\rm p}'}				&S\otimes_{K} M\ar@{->>}[r]^{\iota'}	 		&SM\\
R\ar[r]^{\pi'}\ar[u]				&\hat{S}\ar[r]^{\hat{{\rm p}}'}		&\hat{S} M\ar@{=}[u] 
}
\]
Taking spectra we get the diagram of varieties given in \eqref{part_2} (where $\widehat{W} = {\rm Spec} (\hat{S})$). Then $\widehat{W}$ is absolutely irreducible by \eqref{en1} and $\deg \pi = |\ker\alpha|$ by \eqref{en2'}.

Let $\mathfrak{Q} \in W_M(\tilde K)$ be \'etale over $V$ and denote by $\mathfrak{P}$ and $\hat{\mathfrak{P}}$ the respective images  of $\mathfrak{Q}$ in $W$ and $\widehat{W}$. 
First assume that $\hat{\mathfrak{P}} \in \widehat{W}(K)$. Then $\hat{S}/\hat{\mathfrak{P}} = K$ (by abuse of notation $\hat{\mathfrak{P}}$ also denotes the ideal of $\hat{S}$ associated to the point). Extend the map $\hat{S}\to\hat{S}/\hat{\mathfrak{P}}$ to a $K$-rational place $\hat{\Phi}$ of $\hat{S}$. Extend $\hat{\Phi}$ linearly to a place $\Psi$ of $\hat{F}$. Then $\Phi = \Psi|_{F}$ extends the map $S\to S/\mathfrak{P}$. As explained in the definition of geometric solutions, $\Phi^*$ and $\mathfrak{P}^*$ coincides, so $\mathfrak{P}^* = \Phi^* = \theta$ by \eqref{en6}. 

Conversely, assume that $\mathfrak{P}^* = \theta$. Extend $SM \to SM/\mathfrak{Q}$ to a place $\Psi$ of $\hat{F}$, and let $\hat{\Phi}$ and $\Phi$ the restrictions to $\hat{E}$ and $F$, respectively. This can be done such that the residue field of $\Phi$ is $S/\mathfrak{P}$. Then $\Phi^* = \mathfrak{P}^* = \theta$, so by \eqref{en6}, $\hat{\Phi}$ is $K$-rational, and in particular, $\hat{S}/\hat{\mathfrak{P}}  = K$, so $\hat{\mathfrak{P}}\in \widehat{W}(K)$. This finishes the proof of \eqref{part_2}.

Finally we prove \eqref{part_3}. Let $\mathfrak{p}\in V(K)$ be \'etale in $W$, and assume $\mathfrak{p} = \pi(\hat{\mathfrak{P}})$, for $\hat{\mathfrak{P}}\in \widehat{W}(K)$. Choose $\mathfrak{Q}\in W_M$ lying above $\hat{\mathfrak{P}}$ and let $\mathfrak{P} = {\rm p}(\iota(\mathfrak{Q}))$. Then $\mathfrak{P}^* =\theta$ by \eqref{part_2}, so $\mathfrak{p}^* = \Theta$, and thus $\mathfrak{p}\in U$. Thus $\pi(\widehat{W}(K)) \subseteq U$.

Note that since $W_M$ is an irreducible factor of $W\times_K M$, every point of $W$ can be uniquely lifted to $W_M$. Similarly for $\widehat{W}$, since $W_M \cong \widehat{W} \times_K M$.  Let $\mathfrak{p}\in U$, so $\mathfrak{p}^* = \Theta$. Note that $\ker\alpha$ acts on $\rho^{-1}(\mathfrak{p})$, and $(\sigma\mathfrak{P})^* = \sigma \mathfrak{P}^* \sigma^{-1}$. So, since there are $|\Theta|$ such solutions, the set $P$ of $\mathfrak{P}$ with $\mathfrak{P}^* = \theta$ is of size $d=|\ker\alpha|/|\Theta|$. Then $P_M=({\rm p} \circ \iota)^{-1}(P)$ is also of size $d$, and hence $\hat{P} = \hat{\rm p}(\nu(P_M))$ is of size $d$. 

By \eqref{part_2}, 
$\hat{P}\subseteq \pi^{-1}(\mathfrak{p})\cap \widehat{W}(K)$. Assume $\hat{\mathfrak{P}}\not\in \hat{P}$. Take $\mathfrak{Q}\in W_M$ lying above it and take $\mathfrak{P}={\rm p}(\mathfrak{Q})$. Then $\mathfrak{P}^* \neq \theta$. So $\hat{\mathfrak{P}}\not\in \widehat{W}(K)$. 
\end{proof}

\subsection{Irreducibility theorem for PAC fields}
In this section we study conditions for a polynomial $f(X)\in R[X]$ that is irreducible and separable to admit irreducible  specializations. First we need the following consequence of Proposition~\ref{prop:geo-sol-rat-place}.
\begin{proposition}
\label{prop:geom-sol-PAC}
A field $K$ is PAC if and only if every solution $\theta$ of every geometric embedding problem $\mathcal{E}(W/V)$ is geometric. Moreover, for each $\theta$ there exists a Zariski dense set of $\mathfrak{p}\in V(K)$, \'etale in $W$, such that $\mathfrak{p}^* = \Theta$, where $\Theta$ is the $\ker\alpha$-inner-automorphism class of $\theta$. 
\end{proposition}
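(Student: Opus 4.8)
The plan is to deduce both implications from Proposition~\ref{prop:geo-sol-rat-place}, combined with the very definition of PAC recalled in the introduction: $K$ is PAC if and only if every non-void absolutely irreducible $K$-variety has a $K$-rational point. Since this condition is inherited by every nonempty Zariski open subvariety, it is in fact equivalent to every absolutely irreducible $K$-variety having a Zariski dense set of $K$-points.

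For the direction ``$K$ PAC $\Rightarrow$ every solution is geometric'', I would fix a solution $\theta$ of a geometric embedding problem $\mathcal{E}(W/V)$ and apply Proposition~\ref{prop:geo-sol-rat-place} to produce the absolutely irreducible smooth variety $\widehat{W}$ together with the finite morphism $\pi\colon \widehat{W}\to V$. Because $\widehat{W}$ is absolutely irreducible and $K$ is PAC, every nonempty Zariski open subset of $\widehat{W}$ (being itself absolutely irreducible and non-void) carries a $K$-point, so $\widehat{W}(K)$ is Zariski dense in $\widehat{W}$. The morphism $\pi$ arises from the integral ring extension $R\hookrightarrow \hat{S}$ of domains, hence is finite and surjective, so it sends the dense set $\widehat{W}(K)$ onto a Zariski dense subset of $V$. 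By part~\eqref{part_3} of the proposition this image is exactly the set $U$ of $\mathfrak{p}\in V(K)$ that are \'etale in $W$ with $\mathfrak{p}^*=\Theta$; thus $U$ is Zariski dense, which is the ``moreover'' claim. In particular $U\neq\emptyset$, so some $\mathfrak{p}$ satisfies $\mathfrak{p}^*=\Theta$, and choosing the representative $\mathfrak{P}$ with $\mathfrak{P}^*=\theta$ inside the class $\Theta$ shows that $\theta$ is geometric.

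For the converse I would invoke the trivial embedding problem. Given a non-void absolutely irreducible $K$-variety $V_0$, I first replace it by a nonempty smooth affine absolutely irreducible open subset $V$, which exists since a geometrically integral variety is generically smooth and its smooth locus is a nonempty open. Taking $W=V$ and $\rho=\id$ gives the geometric embedding problem $\mathcal{E}(V/V)$, in which $F=E$, $L=K$, and $\gal(F/E)$ is trivial; its only solution is the trivial homomorphism $\theta$. By hypothesis $\theta$ is geometric, so there is an \'etale point $\mathfrak{P}$ lying over some $\mathfrak{p}\in V(K)$ with $\mathfrak{P}^*=\theta$ (all points are \'etale here as $\rho=\id$). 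Hence $V(K)\neq\emptyset$, so $V_0(K)\neq\emptyset$, and since $V_0$ was arbitrary, $K$ is PAC.

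The ring-theoretic facts (finiteness and surjectivity of $\pi$, and the reduction to a smooth affine absolutely irreducible model) are routine. The step requiring the most care is the bookkeeping in the forward direction: one must check that the density of $\widehat{W}(K)$ transfers through $\pi$ precisely to the prescribed Artin class $\Theta$, rather than merely to some larger collection of \'etale points. This is exactly the content that part~\eqref{part_3} of Proposition~\ref{prop:geo-sol-rat-place} was engineered to supply, so in effect the whole conceptual difficulty has already been absorbed into that proposition and the present proof is a clean application of it.
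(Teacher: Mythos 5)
Your proof is correct and takes essentially the same route as the paper: the forward direction applies Proposition~\ref{prop:geo-sol-rat-place} together with the PAC property to get Zariski density of $\widehat{W}(K)$ and pushes it through $\pi$ using part~\eqref{part_3}, while the converse uses the trivial embedding problem $\mathcal{E}(V/V)$ on a smooth model. The details you make explicit (finiteness and surjectivity of $\pi$ carrying dense sets to dense sets, and the reduction to a smooth affine absolutely irreducible open subvariety) are precisely what the paper's terser proof leaves implicit.
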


\begin{proof}
Let $\mathcal{E}(W/V)$ be a geometric embedding problem, and let $\theta$ be a solution. Every open subvariety of $\widehat{W}$ given in Proposition~\ref{prop:geo-sol-rat-place} has $K$-rational points, hence $\widehat{W}(K)$ is Zariski dense. The assertion follows, since for every $\mathfrak{p}\in \pi(\widehat{W}(K))\subseteq V(K)$ that is \'etale in $W$ we have $\mathfrak{p}^* = \Theta$. 

Vice-versa, let $V$ be an absolutely irreducible $K$-variety, we can assume that $V$ is smooth, otherwise we replace $V$ by an open subvariety. Consider the geometric embedding problem $\mathcal{E}(V/V)$. It has a solution, the trivial one, say $\theta\colon \gal(K)\to 1$. By assumption $\theta =\mathfrak{p}^*$, for some $\mathfrak{p}\in V(K)$. In particular, $V(K)$ is not empty, and $K$ is PAC. 
\end{proof}

Let $V$ be an absolutely irreducible smooth $K$-variety, $R$ the ring of regular functions, and $f(X)\in R[X]$ a monic separable polynomial. By Lemma~\ref{lem:factorizatiotype} to have $\mathfrak{p}\in V(K)$ such that $f \mod \mathfrak{p}$ is of a given factorization type, say $P$, it is necessary that $\mathcal{E}(f,V)$ has a solution with orbit type $P$. Over PAC fields this condition also suffices. 

 \begin{theorem}
\label{thm:HITPAC}
Let $K$ be a PAC field, $V$  an absolutely irreducible smooth $K$-variety with ring of regular function $R$, $f(X)\in R[X]$ a separable monic polynomial, and $P$ a partition of $\deg f$. Assume that the induced embedding problem has a solution whose orbit type is $P$. Then there exists a Zariski dense set of $\pfrak\in V(K)$ such that $f\mod \pfrak$ is a separable polynomial of factorization type $P$.
\end{theorem}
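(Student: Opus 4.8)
The plan is to read the conclusion off directly from the machinery already assembled, recognizing that all the substantial content lives in Proposition~\ref{prop:geom-sol-PAC} and Lemma~\ref{lem:factorizatiotype}; the present theorem is essentially their synthesis. The object to feed into these results is the geometric embedding problem $\mathcal{E}(f,V) = \mathcal{E}(V_f/V)$ associated to $f$, together with the hypothesized solution $\theta\colon \gal(K)\to \gal(F/E)$ of orbit type $P$. That $\mathcal{E}(f,V)$ is a legitimate geometric embedding problem is exactly the content of the construction preceding Lemma~\ref{lem:factorizatiotype}: since $V$ is absolutely irreducible and smooth, $\rho\colon V_f\to V$ is a finite separable morphism and $F/E$ is Galois.

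First I would invoke the ``moreover'' clause of Proposition~\ref{prop:geom-sol-PAC}. Applying it to $\mathcal{E}(f,V)$ and $\theta$, and using that $K$ is PAC, produces a Zariski dense set $U\subseteq V(K)$ of points $\pfrak$ that are \'etale in $V_f$ and satisfy $\pfrak^* = \Theta$, where $\Theta$ denotes the $\ker\alpha$-inner-automorphism class of $\theta$. This is precisely where the PAC hypothesis is consumed: density of $U$ comes from the abundance of $K$-rational points on the form $\widehat{W}$ produced in Proposition~\ref{prop:geo-sol-rat-place}, whose $\pi$-image is $U$.

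Next I would translate the Artin-class condition into a factorization statement pointwise on $U$. For each $\pfrak\in U$, \'etaleness in $V_f$ means the discriminant of $f$ is invertible at $\pfrak$, so $f\mod \pfrak$ is separable. By Lemma~\ref{lem:factorizatiotype} the factorization type of $f\mod \pfrak$ equals the orbit type of $\pfrak^*$. Since the orbit type is an inner-automorphism invariant, the orbit type of the class $\Theta = \pfrak^*$ equals the orbit type of $\theta$, which is $P$ by assumption. Hence $f\mod \pfrak$ is a separable polynomial of factorization type $P$ for every $\pfrak$ in the dense set $U$, which is the claim.

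I do not anticipate a genuine obstacle, as the hard work has been front-loaded into the earlier results; the only care required is bookkeeping. One must confirm that the \'etale locus supplied by the ``moreover'' clause coincides with the locus on which the reduction is separable and on which Lemma~\ref{lem:factorizatiotype} applies---but both are governed by the same condition, \'etaleness in $V_f$, so no shrinking of $U$ is needed and density is preserved. The remaining point, that the orbit type descends correctly from the individual solution $\theta$ to its class $\Theta$, is exactly the invariance already noted in the definition of the orbit type of a class of $\ker\alpha$-inner-automorphisms.
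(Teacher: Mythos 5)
Your proposal is correct and follows essentially the same route as the paper's own proof: apply Proposition~\ref{prop:geom-sol-PAC} to $\mathcal{E}(f,V)$ and $\theta$ to get a Zariski dense set of \'etale $\pfrak\in V(K)$ with $\pfrak^*=\Theta$, then conclude via Lemma~\ref{lem:factorizatiotype} and the inner-automorphism invariance of the orbit type. The paper merely inlines the content of Lemma~\ref{lem:factorizatiotype} (the identification of the Galois action on roots of $f\bmod\pfrak$ with the action of the image of $\theta$) rather than citing it, so there is no substantive difference.
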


\begin{proof}
Let $\theta$ be a solution of factorization type $P$. By Proposition~ \ref{prop:geom-sol-PAC}  we have $\Theta = \mathfrak{p}^*$, for a Zariski dense set of $\mathfrak{p}\in V(K)$ that are \'etale in $V_f$. For each such $\mathfrak{p}$, the action of $\gal(K)$ on the roots of $f\mod \mathfrak{p}$ coincides (up to labeling of the roots) with the action on the image of $\theta$ on the roots of $f$. The latter has orbit type $P$, so the factorization type of $f\mod \mathfrak{p}$ is $P$.  
\end{proof}

\begin{remark}
From the proof follows a stronger statement. Namely, under the notation of the theorem, there exists a Zariski dense set of $\mathfrak{p}\in V(K)$ such that the splitting field of $f \mod \mathfrak{p}$ is the solution field $\tilde K^{\ker \theta}$ of $\theta$. 
\end{remark}

\begin{remark}
The theorem holds true even if $f$ is not monic, and $V$ is not smooth. Indeed, in that case we replace $V$ with a smooth open subvariety, such that, the leading coefficient of $f$ is invertible in the ring of regular functions. 
\end{remark}

To connect Theorem~\ref{thm:HITPAC} to Hilbertian fields, one needs to take $V=\mathbb{A}^n$, and take $P$ the partition to a single part. 

\begin{corollary}
Let $K$ be a PAC field, $f(A_1,\ldots, A_n,X)\in K[A_1,\ldots, A_n,X]$ a monic polynomial in $n+1$ variables, $n\geq 1$, that is separable in $X$, and let $g(A_1,\ldots, A_n)$ nonzero polynomial. Assume that the associated embedding problem $\mathcal{E}(f,\mathbb{A}^n)$ has a transitive solution. Then there exists $a_1,\ldots, a_n\in K^n$ such that 
\[
f(a_1, \ldots, a_n, X) \mbox{ is irreducible and } g(a_1,\ldots, a_n)\neq 0.
\]
\end{corollary}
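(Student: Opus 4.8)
The plan is to apply Theorem~\ref{thm:HITPAC} directly, taking $V = \mathbb{A}^n$, whose ring of regular functions is the polynomial ring $R = K[A_1,\ldots,A_n]$. Here $\mathbb{A}^n$ is absolutely irreducible and smooth, and $f$ is monic and separable in $X$, so the hypotheses on the pair $(V,f)$ are exactly those of Theorem~\ref{thm:HITPAC}. I would first record the elementary dictionary for this $V$: a $K$-rational point $\mathfrak{p}\in \mathbb{A}^n(K)$ is a tuple $(a_1,\ldots,a_n)\in K^n$, and reduction modulo $\mathfrak{p}$ of a coefficient of $f$ (which is a polynomial in the $A_i$) is nothing but its evaluation at $(a_1,\ldots,a_n)$. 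Since $f$ is monic in $X$, its leading coefficient $1$ reduces to $1$, so the $X$-degree is preserved and $f\bmod\mathfrak{p} = f(a_1,\ldots,a_n,X)$.

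Next I would choose $P$ to be the partition of $\deg_X f$ consisting of a single part. By the definition of transitivity in the preceding subsection, a solution of $\mathcal{E}(f,\mathbb{A}^n)$ is transitive precisely when its orbit type is this single-part partition $P$; hence the assumed transitive solution is a solution of orbit type $P$. Theorem~\ref{thm:HITPAC} then produces a Zariski dense set $U\subseteq \mathbb{A}^n(K)$ of points $\mathfrak{p} = (a_1,\ldots,a_n)$ for which $f\bmod\mathfrak{p} = f(a_1,\ldots,a_n,X)$ is separable of factorization type $P$, that is, irreducible.

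It then remains only to impose the side condition $g(a_1,\ldots,a_n)\neq 0$. Since $g$ is a nonzero polynomial and $\mathbb{A}^n$ is irreducible, its zero locus $Z(g)$ is a proper Zariski-closed subset, so the open set $\{g\neq 0\}$ is nonempty. Were $U$ contained in $Z(g)$, its Zariski closure would lie in the proper closed set $Z(g)$, contradicting the density of $U$. Therefore some $\mathfrak{p} = (a_1,\ldots,a_n)\in U$ satisfies $g(a_1,\ldots,a_n)\neq 0$, and for this point $f(a_1,\ldots,a_n,X)$ is irreducible, as required.

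There is essentially no hard step here: the corollary is a formal specialization of Theorem~\ref{thm:HITPAC} to the affine space with the single-part partition. The only points deserving care are the bookkeeping identification of $\mathbb{A}^n(K)$ with $K^n$ together with the fact that reduction modulo $\mathfrak{p}$ coincides with evaluation of the coefficients, and the elementary density argument used to meet the constraint $g\neq 0$. The latter is exactly where one uses the full strength of the conclusion of Theorem~\ref{thm:HITPAC}, namely that the good specializations form a \emph{Zariski dense} set rather than merely a nonempty one.
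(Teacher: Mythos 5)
Your proposal is correct and matches the paper's intended argument: the paper itself gives only the one-line indication ``take $V=\mathbb{A}^n$ and $P$ the partition with a single part,'' which is precisely your specialization of Theorem~\ref{thm:HITPAC}, with the identification $\mathbb{A}^n(K)=K^n$ and reduction-equals-evaluation handled as you describe. Your use of Zariski density to meet the side condition $g\neq 0$ is the standard (and correct) way to finish, and it is exactly why the theorem's dense-set conclusion, rather than mere nonemptiness, is what the corollary needs.
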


An interesting special case, that appears in \cite{Bary-Soroker2009PAMS}, is when   $f(A_1,\ldots, A_n,X)$ is the `most' irreducible, i.e., when  $\gal(f,\tilde K(A_1,\ldots, A_n))$ is the full symmetric group. Then $\mathcal{E}(f,\mathbb{A}^n)$ is 
\[
\xymatrix{
		&\gal(K)\ar[d]\\
S_n\ar[r]&1,
}
\]
so a transitive solution exists if and only if $K$ has a separable extension of degree $n$.

\section{Calculating a Galois group of compositum of polynomials}
\subsection{Galois groups of shifted generic polynomials} 
Let us start by fixing the notation that will be used through out this section:
\begin{quote}\begin{tabular}{ll}
$K$&  an algebraically closed field of characteristic $p\geq 0$,\\[2pt]
$S_n$& the symmetric group on $n$ letters, \\[2pt]
$A_1, \ldots, A_n$& $n$ algebraically independent variables,\\[2pt]
$\Omega=\{\omega_1, \ldots, \omega_m \}\subseteq K$ & subset of $K$ of $m$ elements,\\[2pt]
$g(\bfA,X) = X^n + A_1 X^{n-1} + \cdots A_n$&  a general polynomial,\\[2pt]
$g_i(\bfA,X) = g (\bfA,X) -\omega_i$ &  shifted general polynomials, $i=1, \ldots, m$,\\[2pt]
$F_i$ & the splitting field of $g_i$ over $K(\bfA)$, $i=1,\ldots, m$.
\end{tabular}
\end{quote}

It is a simple exercise in algebra that $\gal(g,K(\bfA))  \cong S_n$ (e.g., \cite[Example 4, VI, \S2]{Lang2002}). Hence, $\gal(g_i(\bfA,X), K(\bfA)) = \gal(F_i/K(\bfA))  \cong S_n$. The objective of the section is to prove Proposition~\ref{prop:prod-sym-grps-oddchar}, i.e., that $F_1, \ldots, F_r$ are linearly disjoint.  Equivalently,  
\[
\gal\Big(\prod_{i=1}^m g_i , K(\bfA)\Big) \cong S_n^m
\] 
(since $\gal(F_i/K(\bfA)) = S_n$.)

This will be proved in a series of lemmas.

Assume for a short while that $p=n=2$. As mentioned in the introduction, the above assertion fails to hold. Nevertheless, one can characterize $\Omega$'s for which the assertion holds: $F_1, \ldots, F_m$ are linearly disjoint if and only if the sum of any \emph{even} number of $\omega_i$'s does not vanish. We leave this as an exercise for the reader. We do not know what happen when $p=2$ and $n>2$ is even. A na\"ive possibility is suggested in the following question.

\begin{question}
Assume $p=2$, $n$ even. Does the following assertion hold true? $F_1, \ldots, F_m$ are linearly disjoint if and only if every \emph{even} sum of distinct elements of $\Omega$ does not vanish. 

In the answer is no, characterize the finite subsets $\Omega\subseteq K$ for which $F_1, \ldots, F_m$ are linearly disjoint. 
\end{question}

We note that the assertion of Theorems~\ref{thm:large-finite-arithmetic-type} and \ref{thm:Ans_PAC} holds for polynomials $f_1, \ldots, f_r$ such that the set $\Omega$ of all their roots satisfies the assertion of the proposition, as will be  shown in the sequel. 

Now we start proving Proposition~\ref{prop:prod-sym-grps-oddchar}. We denote by $A_n$ the alternating group.

\begin{lemma}\label{lem:alt-sym}
Let $r,n\geq 1$, for each $i=1, \ldots,r$ let $\alpha_i \colon G\to S_n$ be an epimorphism, and let $\alpha = \prod \alpha_i \colon G \to n^r$. Assume $\beta \colon G\to (\mathbb{Z}/2\mathbb{Z})^r$ induced by the natural map $S_n^r\to S_n^r/A_n^r$ is surjective. Then $\alpha$ is surjective. 
\end{lemma}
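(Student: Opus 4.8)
The plan is to argue purely group-theoretically, by induction on $r$, combining Goursat's lemma with the structure of the normal subgroups of $S_n$. Write $H=\alpha(G)\le S_n^r$; since each $\alpha_i$ is onto, $H$ projects onto every coordinate $S_n$, and the goal is exactly $H=S_n^r$. The key reformulation of the hypothesis is that $\beta$ being surjective means the $r$ sign characters $\epsilon_i=\mathrm{sgn}\circ\pi_i\colon H\to\mathbb{Z}/2\mathbb{Z}$ (with $\pi_i$ the $i$-th projection) are $\mathbb{F}_2$-linearly independent as homomorphisms $H\to\mathbb{Z}/2\mathbb{Z}$. I would use this linear-independence form throughout. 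One may assume $n\ge2$, since $S_1=1$ makes everything trivial.

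For the base case $r=1$ the claim is the hypothesis itself. For the inductive step I would first discard the last coordinate: the map $\beta'=(\epsilon_1,\dots,\epsilon_{r-1})$ is the composite of the surjective $\beta$ with the coordinate projection $(\mathbb{Z}/2\mathbb{Z})^r\to(\mathbb{Z}/2\mathbb{Z})^{r-1}$, hence surjective, and $\alpha_1,\dots,\alpha_{r-1}$ are onto; so the inductive hypothesis gives $(\alpha_1,\dots,\alpha_{r-1})(G)=S_n^{r-1}$. Thus $H\le S_n^{r-1}\times S_n$ surjects onto both factors, and Goursat's lemma presents $H$ as the fibre product attached to normal subgroups $M_1\trianglelefteq S_n^{r-1}$, $M_2\trianglelefteq S_n$ and an isomorphism $\phi\colon S_n^{r-1}/M_1\xrightarrow{\sim}S_n/M_2=:Q$, namely $H=\{(x,y):\phi(xM_1)=yM_2\}$. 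Proving $H=S_n^r$ is then the same as proving $Q=1$ (equivalently $M_2=S_n$), whence $M_1=S_n^{r-1}$ and the defining condition on $H$ becomes vacuous.

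The crux is to force $Q=1$, and here the sign hypothesis enters. Suppose $M_2\ne S_n$. The group-theoretic fact I would invoke is that every proper normal subgroup of $S_n$ is contained in $A_n$ (for $n\ge5$ this is the list $1,A_n$; for $n\le4$ one checks it directly, the only borderline case being the Klein four-group $V_4\trianglelefteq S_4$, which still lies in $A_4$). Hence $M_2\le A_n$, so $\mathrm{sgn}$ factors through $Q$ as some $\bar\epsilon\colon Q\to\mathbb{Z}/2\mathbb{Z}$, and $\bar\epsilon\ne1$ because $\mathrm{sgn}\ne1$. Pulling the nontrivial homomorphism $\bar\epsilon\circ\phi\colon S_n^{r-1}/M_1\to\mathbb{Z}/2\mathbb{Z}$ back to $S_n^{r-1}$ produces a nonzero element of $\mathrm{Hom}(S_n^{r-1},\mathbb{Z}/2\mathbb{Z})$; since $S_n^{\mathrm{ab}}\cong\mathbb{Z}/2\mathbb{Z}$, every such homomorphism equals $\sum_{j\in J}\mathrm{sgn}_j$ for some nonempty $J\subseteq\{1,\dots,r-1\}$. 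Evaluating on $(x,y)\in H$, where $\mathrm{sgn}(y)=\bar\epsilon(yM_2)=\bar\epsilon(\phi(xM_1))$, yields the identity $\epsilon_r=\sum_{j\in J}\epsilon_j$ of homomorphisms $H\to\mathbb{Z}/2\mathbb{Z}$. This is a nontrivial $\mathbb{F}_2$-linear dependence among $\epsilon_1,\dots,\epsilon_r$, contradicting the surjectivity of $\beta$. Therefore $M_2=S_n$, so $Q=1$ and $H=S_n^r$.

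The step I expect to be the real obstacle is exactly this last one: converting a hypothetical nontrivial Goursat linking into a linear relation among the sign characters, and recognizing that surjectivity of $\beta$ is precisely what forbids it. The two supporting facts that make the argument go through — that every proper normal subgroup of $S_n$ lies in $A_n$, and that $\mathrm{Hom}(S_n,\mathbb{Z}/2\mathbb{Z})$ is spanned by the sign character — are standard but must be verified, with the small cases $n=2$ and $n=4$ checked by hand.
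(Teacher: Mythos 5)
Your proof is correct and is essentially the paper's own argument: induction on $r$, Goursat's lemma presenting the image as a fibre product of $S_n^{r-1}$ and $S_n$ over a common quotient, the fact that every proper normal subgroup of $S_n$ lies in $A_n$ (with the $n=4$ Klein-group case checked), and the identification of index-two data in $S_n^{r-1}$ with sign characters. The only difference is cosmetic: you derive the contradiction as an $\mathbb{F}_2$-linear dependence among the sign characters $\epsilon_1,\dots,\epsilon_r$, whereas the paper runs the dual subgroup-lattice argument (its subgroups $E$, $E'$ are the kernels of your characters, and its condition $E'E=G$ is exactly your reformulation of the surjectivity of $\beta$ as linear independence).
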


\begin{proof}
The case $r=1$ is trivial; we proceed by induction on $r$. Let $S = S_n$ and $S' = S_n^{r-1}$. By induction we have that $\alpha' = \prod_{i=1}^{r-1} \alpha_i \colon G\to S'$ is surjective. Let $M'=\ker\alpha'$, $M=\ker\alpha_r$, and $A = G/M'M$. Let $S'  \to A$ and $S\to A$ be the projections induced by $\alpha', \alpha$, respectively. Then the image of $\alpha = \alpha'\times \alpha_r$ in $S'\times S$ is $S'\times_A S$. 

Since $G/M = S\cong S_n$, there exists $M\leq E \leq G$ such that $E/M \cong A_n$. Similarly, there exists $M'\leq E'\leq G$ such that $E'/M'\cong A_n^{r-1}$. 

$$
\xymatrix@=12pt{
						&G\ar@{-}[dr]\ar@{-}[dl]\\
			E'\ar@{-}[dd]\ar@{-}[dr]					&&E\ar@{-}[dl]\ar@{-}[dd]\\
						&E'\cap E\ar@{-}[dd]\\
			M'\ar@{-}[dr]							&&M\ar@{-}[dl]\\
				&M'\cap M =1			
}
$$

We have 
\begin{enumerate}
\item \label{S_n1}
$E/M$ contains all proper normal subgroups of $G/M$,
\item \label{S_n2} 
$E'/M$ is contained in every subgroup of $G/M'$ of index $2$, and 
\item  \label{S_n3}
$E'E=G$.
\end{enumerate}
Indeed, the proper normal subgroups of $S_n$ are $A_n$, $1$, and if $n=4$, the the Klein group $V_4$. Thus $A_n$ contains all proper normal subgroups, and    \eqref{S_n1} follows. 

For a group $H$ we let $H^{(2)}$ be the subgroup generated by squares. Then  every index $2$ subgroup contains $H^{(2)}$. Since $(h_1,h_2)^2 = (h_1^2,h_2^2)$ we have $(H_1\times H_2)^{(2)} = H_1^{(2)}\times H_2^{(2)}$. Note that $S_n^{(2)} = A_n$, since $S_n^{(2)}\subseteq A_n$, $(i\ j\ k)^2 = (i\ k\ j)$, and $A_n$ is generated by $3$-cycles. Thus $(S_n^{r-1})^{(2)} = A_n^{r-1}$, so $(G/M')^{(2)} = E'/M'$, and \eqref{S_n2} follows. 

We have $\ker(\beta) = E'\cap E$, so by assumption $G/E'\cap E \cong (\mathbb{Z}/2\mathbb{Z})^r$. Since $G/E' \cong( \mathbb{Z}/2\mathbb{Z})^{r-1}$ we have $E'E/E \cong E'/E'\cap E = \mathbb{Z}/2\mathbb{Z}$, so $(G:E'E) = (G:E)/(E'E:E) = 2/2 =1$, and \eqref{S_n3} follows. 

We continue with the proof. If $A=1$, then $G=S'\times S \cong S_n^r$, and we are done. Assume $A\neq 1$. Then $M'M/M$ is a proper normal subgroup of $G/M$, hence $M'M\leq E$ by \eqref{S_n1}.

In particular $M'\subseteq E$. Then $G/E \cong \mathbb{Z}/2\mathbb{Z}$ is a quotient of $G/M'$, so \eqref{S_n2}  implies $E'\leq E$. But then $E'E = E \neq G$, which contradicts \eqref{S_n3}. 
\end{proof}
 
For each $i$, let $E_i$ be the fixed field of $A_n$ in $F_i$. Then $E_i/K(\bfA)$ is a quadratic extension. Clearly if $F_1, \ldots, F_m$ are linearly disjoint, then  $E_1,\ldots, E_m$ are linearly disjoint. The next lemma shows that the converse holds. 

\begin{lemma}\label{lem:red-to-quad-ext}
If $E_1, \ldots, E_m$ are linearly disjoint over $K(\bfA)$, then $F_1, \ldots, F_m$ are linearly disjoint over $K(\bfA)$.
\end{lemma}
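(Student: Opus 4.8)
The plan is to apply Lemma~\ref{lem:alt-sym} to the Galois-theoretic setting at hand. Set $G = \gal\big(\prod_{i=1}^m g_i, K(\bfA)\big)$, the Galois group of the compositum $F_1\cdots F_m$ over $K(\bfA)$. For each $i$ the restriction map $\alpha_i\colon G \to \gal(F_i/K(\bfA)) \cong S_n$ is an epimorphism, since $G$ is a Galois group of a field containing $F_i$ and restriction to any single splitting field is surjective. Thus we are exactly in the situation of the lemma, with $\alpha = \prod_i \alpha_i\colon G \to S_n^m$, and proving that $F_1, \ldots, F_m$ are linearly disjoint is the same as proving $\alpha$ is an isomorphism, i.e.\ surjective (injectivity is automatic, as $G$ embeds into $\prod_i \gal(F_i/K(\bfA))$ via restriction).

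First I would identify, for each $i$, the quadratic subextension $E_i/K(\bfA)$ fixed by $A_n \leq \gal(F_i/K(\bfA)) \cong S_n$. Under $\alpha_i$, the composite $G \to S_n \to S_n/A_n \cong \mathbb{Z}/2\mathbb{Z}$ is precisely the map recording how $\gal(K(\bfA))$ acts on $E_i$, and its kernel is $\gal(\cdot/E_i)$. Assembling these gives the map $\beta\colon G \to (\mathbb{Z}/2\mathbb{Z})^m$ induced by $S_n^m \to S_n^m/A_n^m$, whose image is governed by the quadratic extensions $E_1, \ldots, E_m$: the image of $\beta$ is all of $(\mathbb{Z}/2\mathbb{Z})^m$ exactly when $E_1, \ldots, E_m$ are linearly disjoint over $K(\bfA)$. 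Indeed, $\ker\beta$ fixes the compositum $E_1\cdots E_m$, so $\beta$ is surjective if and only if $[E_1\cdots E_m : K(\bfA)] = 2^m$, which is the linear disjointness of the $E_i$ (each being quadratic).

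With this translation in place, the proof becomes immediate: assume $E_1, \ldots, E_m$ are linearly disjoint over $K(\bfA)$. Then $\beta$ is surjective by the preceding paragraph, and Lemma~\ref{lem:alt-sym} (applied with $r = m$) yields that $\alpha$ is surjective. Hence $\alpha$ is an isomorphism and $F_1, \ldots, F_m$ are linearly disjoint over $K(\bfA)$, as claimed.

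The step I expect to require the most care is the bookkeeping identifying $\beta$ and its image with the linear disjointness of the quadratic fields $E_i$. The main point is to verify that the natural projection $S_n \to S_n/A_n$ corresponds, under Galois correspondence, to restriction of $\gal(K(\bfA))$-automorphisms to $E_i$, so that surjectivity of the combined map $\beta$ is genuinely equivalent to $[E_1\cdots E_m:K(\bfA)] = 2^m$. Once this dictionary is set up correctly, the algebraic input is entirely encapsulated by Lemma~\ref{lem:alt-sym}, and nothing further is needed. Note in particular that this argument uses only that each $\gal(F_i/K(\bfA)) \cong S_n$ and that the $A_n$-fixed fields are the relevant quadratic pieces; it does not require the characteristic hypotheses directly, those having been absorbed into the earlier identification of the Galois groups as $S_n$.
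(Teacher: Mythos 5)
Your proof is correct and takes essentially the same route as the paper: both reduce the lemma to Lemma~\ref{lem:alt-sym} applied to the product of the restriction maps $\alpha_i$ onto $\gal(F_i/K(\bfA))\cong S_n$ together with the induced map $\beta$ to $(\mathbb{Z}/2\mathbb{Z})^m$, translating linear disjointness into surjectivity of these maps. The only cosmetic differences are that you work with the finite group $\gal(F_1\cdots F_m/K(\bfA))$ where the paper uses the absolute Galois group $\gal(K(\bfA))$, and that you spell out the dictionary (surjectivity of $\beta$ $\Leftrightarrow$ $[E_1\cdots E_m:K(\bfA)]=2^m$ $\Leftrightarrow$ linear disjointness of the $E_i$), which the paper leaves implicit.
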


\begin{proof}
Let $\alpha_i \colon \gal(K(\bfA)) \to \gal(F_i/K(\bfA))$, let $\alpha = \prod_i \alpha_i\colon \gal(K) \to \prod_{i} \gal(F_i/K)\cong S_n^m$, and let $\beta \colon \gal(K) \to \prod_{i} \gal(E_i/K)\cong (\mathbb{Z}/2\mathbb{Z})^m$. By the assumption of the lemma, $\beta$ is surjective, so by Lemma~\ref{lem:alt-sym},  $\alpha$ is surjective. Thus $F_1, \ldots, F_r$ are linearly disjoint. 
\end{proof}

Let $h(X)$ be a separable polynomial of degree $n$ defined over a field $L$ and let $x_1, \ldots, x_n$ be its roots. We consider the Galois group of $h$ over $L$ as a permutation group on the roots of $h$. We let $E_h\subseteq L(x_1, \ldots, x_n)$ be the extension of degree at most $2$ which is the fixed field of all even elements in the Galois group of $h$.

If $p\neq 2$, then $E_h$ is generated by a root of $X^2 -\Delta(h)$, where $\Delta(h) = \prod_{i\neq j} (x_i - x_i)$ is the discriminant of the polynomial. Let $\delta(h)$ be the square class of $\Delta(h)$, i.e. $\delta(h) = [\Delta(h)]\in L^*/(L^*)^2 = H^1(L,\mathbb{Z}/2\mathbb{Z})$. The last equality follows from Kummer theory. This group is abelian of exponent $2$, hence we regard it as a vector space over $\bbF_2$. 

If $p=2$, then $E_h$ is generated by a root of the Artin-Schreier extension $X^2 + X + A(h)$, where $A(h) = \sum_{i\neq j} \frac{x_ix_j}{x_i^2 + x_j^2}$ (see, e.g., \cite{Berlekamp1976}). Then we let $\delta(h)$ be the Artin-Schreier coset of $A(h)$, i.e., $\delta(h) = A(h) + \wp(L)\in L /\wp(L) = H^1(L,\mathbb{Z}/2\mathbb{Z})$, where $\wp(x) = x^2 + x$. The last equality follows from Artin-Schreier theory. This group, as for $p\neq 2$, is abelian of exponent $2$, hence a vector space over $\bbF_2$. 

We call $\delta(h)$ the \textbf{discriminant class} of $h$. 

For each $i=1,\ldots, m$, $E_i = E_{g_i}$. Therefore $E_1, \ldots, E_m$  are linearly disjoint if and only if $\delta(g_i)$ are linearly independent. So Lemma~\ref{lem:red-to-quad-ext} can be reformulated in terms of the discriminant classes. 

\begin{lemma}\label{lem:red-to-disc}
If $\delta(g_1), \ldots, \delta(g_m)$ are linearly independent, then $F_1, \ldots, F_m$ are linearly disjoint over $K(\bfA)$.
\end{lemma}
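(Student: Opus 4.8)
The plan is to chain together two ingredients that are already in place: the reduction of linear disjointness of the splitting fields $F_i$ to that of the quadratic subextensions $E_i$ (Lemma~\ref{lem:red-to-quad-ext}), and a Kummer/Artin--Schreier dictionary translating linear disjointness of the $E_i$ into linear independence of the classes $\delta(g_i)$. Concretely, I would first observe that it suffices, by Lemma~\ref{lem:red-to-quad-ext}, to show that the hypothesis forces $E_1,\ldots,E_m$ to be linearly disjoint over $K(\bfA)$; the passage from the $E_i$ back to the $F_i$ is then automatic.

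For the disjointness of the $E_i$, I would set $V = H^1(K(\bfA),\mathbb{Z}/2\mathbb{Z})$, regarded as an $\mathbb{F}_2$-vector space. By the definition of the discriminant class, $E_i = E_{g_i}$ is exactly the degree-$\le 2$ extension of $K(\bfA)$ cut out by the class $\delta(g_i)\in V$, via the Kummer identification $V = K(\bfA)^*/(K(\bfA)^*)^2$ when $p\neq 2$ and the Artin--Schreier identification $V = K(\bfA)/\wp(K(\bfA))$ when $p=2$. Since $\gal(F_i/K(\bfA))\cong S_n$, each $E_i$ is a genuine quadratic extension, so each $\delta(g_i)$ is a nonzero vector of $V$.

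The substantive point is the standard correspondence, which is essentially the equivalence already recorded just before the lemma. Identifying $V$ with the group of $\gal(K(\bfA))$-characters of order dividing $2$, the $E_i$ correspond to characters $\chi_i$ whose classes are the $\delta(g_i)$, and the joint character $(\chi_1,\ldots,\chi_m)\colon \gal(K(\bfA))\to (\mathbb{Z}/2\mathbb{Z})^m$ has image $\gal(E_1\cdots E_m/K(\bfA))$. Thus $[E_1\cdots E_m : K(\bfA)] = 2^{\dim\langle \delta(g_1),\ldots,\delta(g_m)\rangle}$, and this equals $2^m$ (i.e.\ the $E_i$ are linearly disjoint) if and only if $\delta(g_1),\ldots,\delta(g_m)$ are $\mathbb{F}_2$-linearly independent, which is precisely the hypothesis. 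Combining with Lemma~\ref{lem:red-to-quad-ext} yields the conclusion.

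I do not expect a serious obstacle here, since the genuinely delicate step---that disjointness of the abelian slices $E_i$ already forces disjointness of the full $S_n$-extensions $F_i$---has been carried out upstream in Lemma~\ref{lem:alt-sym} and Lemma~\ref{lem:red-to-quad-ext}, so this lemma is a clean corollary. The only points demanding care are bookkeeping ones: confirming that each $\delta(g_i)$ is nonzero (so that \emph{linearly independent} is the correct condition rather than merely \emph{distinct}), and keeping the Kummer and Artin--Schreier cases uniform by working throughout in $H^1(K(\bfA),\mathbb{Z}/2\mathbb{Z})$ rather than in its two concrete incarnations.
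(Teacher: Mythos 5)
Your proposal is correct and follows exactly the paper's route: the paper likewise treats this lemma as a reformulation of Lemma~\ref{lem:red-to-quad-ext}, noting that $E_i = E_{g_i}$ and that linear disjointness of $E_1,\ldots,E_m$ over $K(\bfA)$ is equivalent to linear independence of the classes $\delta(g_i)$ in $H^1(K(\bfA),\mathbb{Z}/2\mathbb{Z})$. The only difference is that you spell out the Kummer/Artin--Schreier character dictionary behind that equivalence, which the paper leaves implicit.
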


General polynomials are complicated for calculations. Hence in the last lemma before the proof of the proposition, we shall reduced to specialized polynomials. 

\begin{lemma}
Let $\mathbf{a} = (a_1, \ldots, a_n)$ be an $n$-tuples in a field containing $K$, let $h (\bfa, X)= g(\bfa,X)\in K(\bfa)[X]$. For each $i=1,\ldots, m$ let $h_i = h-\omega_i$. Assume $h_1,\ldots, h_m$ are separable and $\delta(h_1), \ldots, \delta(h_m)$ are linearly disjoint. Then $\delta(g_1), \ldots, \delta(g_m)$ are linearly disjoint. 
\end{lemma}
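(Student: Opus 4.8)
The plan is to argue by contraposition: assuming that $\delta(g_1), \ldots, \delta(g_m)$ are linearly \emph{dependent} over $\bbF_2$, I will produce a linear dependence among $\delta(h_1), \ldots, \delta(h_m)$, contradicting the hypothesis. Recall that $H^1(L,\mathbb{Z}/2\mathbb{Z})$ is $L^*/(L^*)^2$ when $p\neq 2$ and $L/\wp(L)$ when $p=2$, and that a family of classes is linearly dependent precisely when some nonempty subset $S$ sums to zero. Thus linear dependence of the $\delta(g_i)$ means: there is a nonempty $S\subseteq\{1,\ldots,m\}$ with $\prod_{i\in S}\Delta(g_i)=u^2$ for some $u\in K(\bfA)^*$ (case $p\neq 2$), respectively $\sum_{i\in S}A(g_i)=v^2+v=\wp(v)$ for some $v\in K(\bfA)$ (case $p=2$).

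First I would set up the specialization. The assignment $A_j\mapsto a_j$ defines a $K$-algebra homomorphism $K[\bfA]\to K(\bfa)$ whose kernel is a prime ideal $\pfrak$; let $\calO=K[\bfA]_{\pfrak}$ be the corresponding local ring. Since $K[\bfA]$ is a regular (hence normal) domain, $\calO$ is an integrally closed local domain with residue field $K(\bfa)$, and reduction modulo its maximal ideal implements the specialization $\bfA\mapsto\bfa$. The key observation is that, because $\Delta(g_i)$ is a polynomial in $\bfA$ specializing to $\Delta(h_i)\neq 0$ (separability of $h_i$), one has $\Delta(g_i)\in\calO^*$; and, since $A(g_i)$ is a symmetric rational function of the roots whose common denominator is $\prod_{j<k}(x_j+x_k)^2=\Delta(g_i)$ in characteristic $2$, one likewise gets $A(g_i)\in\calO$, with reduction $A(h_i)$.

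The crux of the argument is that the witness of the relation also specializes. In the case $p\neq 2$, $u$ satisfies the monic equation $T^2-\prod_{i\in S}\Delta(g_i)=0$ with constant term in $\calO$, so $u$ is integral over $\calO$, whence $u\in\calO$ by integral closedness. In the case $p=2$, $v$ satisfies $T^2+T-\sum_{i\in S}A(g_i)=0$ with constant term in $\calO$, so again $v\in\calO$. Reducing the defining relation modulo $\pfrak$ then yields $\prod_{i\in S}\Delta(h_i)=\bar u^2$, a square in $K(\bfa)^*$ and nonzero since the left side is, respectively $\sum_{i\in S}A(h_i)=\bar v^2+\bar v\in\wp(K(\bfa))$. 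Either way $\sum_{i\in S}\delta(h_i)=0$, contradicting the linear independence of the $\delta(h_i)$.

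The only real obstacle is exactly this last specialization of the witness $u$ (resp.\ $v$): a priori the square root / Artin--Schreier root is merely a rational function on $\bbA^n$ and could have a pole along $\pfrak$, in which case it would not specialize at all. What saves the argument is that normality of the regular local ring $\calO$ forces any element integral over $\calO$ to lie in $\calO$; this is precisely why the separability hypothesis, which guarantees that $\Delta(g_i)$ is a unit in $\calO$ and hence that the relation has coefficients in $\calO$, is what is needed. I would treat the two characteristics in parallel, noting that both $L^*/(L^*)^2$ and $L/\wp(L)$ realize the same functor $H^1(L,\mathbb{Z}/2\mathbb{Z})$ and that the representatives $\Delta$ and $A$ play identical roles.
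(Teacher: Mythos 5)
Your proof is correct, and it takes a somewhat different route from the paper's. The paper proves the statement directly by constructing an explicit $\bbF_2$-linear ``specialization map'' on 2-torsion cohomology classes: for $p\neq 2$ it uses the UFD structure of $K[\bfA]$ to pick square-free representatives $f_v$, shows that $U=\{v : f_v(\bfa)\neq 0\}$ is a subspace of $K(\bfA)^*/(K(\bfA)^*)^2$, and that $v\mapsto [f_v(\bfa)]$ is linear with $\delta(g_i)\mapsto \delta(h_i)$; for $p=2$ it works in the ring $R=K[\bfA,\Delta(g_1)^{-1},\ldots,\Delta(g_m)^{-1}]$ and checks via a denominator argument that $\wp^{-1}(R)\cap K(\bfA)\subseteq R$, so that $\bfA\mapsto\bfa$ induces a map on Artin--Schreier classes. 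You instead argue contrapositively and specialize the \emph{witness} of a dependence: localizing $K[\bfA]$ at the kernel of the evaluation map gives a normal local ring $\calO$ in which each $\Delta(g_i)$ is a unit, so the square root $u$ (resp.\ Artin--Schreier root $v$) is integral over $\calO$ and hence lies in $\calO$, and reduction modulo the maximal ideal transports the relation to the $\delta(h_i)$. Your integral-closedness trick is exactly what replaces both of the paper's well-definedness arguments (square-free parts for $p\neq 2$, the denominator computation for $p=2$), and it has the advantage of treating the two characteristics uniformly and of avoiding any explicit choice of representatives; what the paper's formulation buys in exchange is an actual linear map $T$ defined on a whole subspace of $H^1(K(\bfA),\mathbb{Z}/2\mathbb{Z})$, which packages the same specialization argument in a reusable form. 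The only points to be careful about in your write-up --- that the reductions of $\Delta(g_i)$ and $A(g_i)$ are literally $\Delta(h_i)$ and $A(h_i)$ --- rest on the universality of these formulas in the coefficients, an assertion the paper makes in identical form, so nothing is missing.
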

We shall use this lemma only when $p\neq 2$, but for the sake of completeness, we proof it for arbitrary characteristic. 

\begin{proof}
We start with $p\neq 2$. 
Let $V = K(\bfA)^*/(K(\bfA)^*)^2$. For each $v\in V$, there exists a unique square-free polynomial $f_v(\bfA)$ such that $v = [f_v]$. For $u,v\in V$ we have $f_{u+v} = f_uf_v/d^2$, where $d=\gcd(f_u,f_v)$. 
Let 
\[
U = \{ v\in V \mid f_v(\mathbf{a}) \neq 0\}. 
\] 
Then $U$ is a subspace. Indeed, $f_{v+u}(\bfa) =  f_v(\bfa)f_u(\bfa)/d^2(\bfa)\neq 0$.

Let $W= K(\bfa)^*/(K(\bfa)^*)^2$. Let $T\colon U \to W$ be the map induced by $\mathbf{A} \mapsto \bfa$, i.e., $T(u) = [f_u(\bfa)]$. It is a linear map, because, by the above 
\[
T(v+u) = [f_v(\bfa) f_u(\bfa)/d^2(\bfa)] = [f_v(\bfa) f_u(\bfa)] = [f_v(\bfa)] + [f_u(\bfa)] = T(v) + T(u).
\]

Since there is a polynomial formula for the discriminant in term of the coefficients, it follows that $\Delta(h_i) = \Delta(g_i)(\bfa)$ (recall that $\Delta(g_i)\in K[\bfA]$). Let $f_i$ be the square-free part of $\Delta(g_i)$, so $\Delta(g_i)= f_i d^2$, and so $\Delta(h_i) = f_i(\bfa) d^2(\bfa)$. The assumption that $h_i$ is separable implies that $\Delta(h_i)\neq 0$. So we get $T(\delta(g_i))=[f_i(\bfa)] = \delta(h_i)$. This finishes the proof for $p\neq 2$.

Assume $p=2$. Let $R = K[\bfA, \Delta(g_1)^{-1}, \ldots, \Delta(g_m)^{-1}]$. Let $u=\frac{f(\bfA)}{k(\bfA)}\in K(\bfA)$ with $\gcd(f,k)=1$. Then the denominator  of $u^2 + u$ is $k^2(\bfA)$. In particular, if $u^2+u\in R$, then $u\in R$. Thus $R$ maps to $R/\wp(R)$ under the map $K(\bfA) \mapsto \wp(K(\bfA))$. In particular $\bfA\to \bfa$ induces a map $T\colon R/\wp(R)\to K(\bfa)/\wp(K(\bfa))$. 

As mentioned above, $\delta(g_i)$ is the class generated by $\sum_{l\neq l'} \frac{x_lx_{l'}}{x_l^2 + x_{l'}^2}$, where $x_1,\ldots, x_n$ are the roots of $g_i$. The common denominator of this expression is 
\[
\prod_{l\neq {l'}} (x_{l} +x_{l'}) =\prod_{l\neq {l'}} (x_{l} -x_{l'}) =\Delta(g_i).
\]
Therefore, $\delta(g_i)\in R$ and $\delta(h_i)\in T(R/\wp(R))$. This finishes the proof for $p=2$. 
\end{proof}

\begin{proof}[Proof of Proposition~\ref{prop:prod-sym-grps-oddchar}]
From the series of lemmas, it suffices to find $\bfA\mapsto \bfa$ such that $\delta(h_i)\in H^1(K(\bfa),\mathbb{Z}/2\mathbb{Z})$ are linearly independent, where $h_i  (X)= g_i(\bfa,X)$. It will be more convenient to specify the coefficients of $h(X) = X^n + a_1 X^{n-1} + \cdots + a_n$ instead of the sequence $(a_1, \ldots, a_n)$.  
We divide the proof into several cases. 

\noindent\textbf{The case when $p\nmid n$ and $n$ is even.} Consider $h(T,X)=X^n - T$, where $T$ is a variable over $K$. Then $\gal(h,K(T)) = C_n$ and the splitting field is $K(\sqrt[n]{T})$. Thus $K(\sqrt{T})$ is the unique quadratic subextension, and $\gal(K(\sqrt[n]{T})/K(\sqrt{T})) = C_{n/2} = C_n\cap A_n$. So  $\delta(h) = [T]$, hence $\delta(h_i) = [T - \alpha_i]$, which are obviously linearly independent in $K(T)^*/(K(T)^*)^2$. 

\noindent\textbf{The case when $p\nmid n(n-1)$.}
We shall use basic facts on ramification theory, one reference  that includes all we use is  \cite{Serre1979}.
Let $S,T$ be two algebraically independent variables, let $h_0(X) = X^n - S X^{n-1}$ and $h(T,X) = h_0(X) - T$. Let $K' = K(S)$,  $R=K'[T]$, let $U = K'(T)[X]/(h)\cong K'(x)$, where $h(T,x) = 0$. Since $T = h(x)$ we have $R[x] = K'[x]$, so $R[x]$ is the integral closure of $R$ in $U$. We then have that the different of the extension $U/K'(T)$ is the ideal generated by the derivative $h_0'(x) = nx^{n-2}(x-\frac{n-1}{n}S) $. So all the ramified primes are $(x)/(T)$, $(x-s_1)/(T-t_1)$, where $s_1 = \frac{n-1}{n}S$ and $t_1 = h_0(s_1)=S^n \beta$, $\beta\in K^*$. The ramification indices are  $n-1$ and $2$, respectively. Thus the corresponding inertia groups $I_0, I_1$ are generated by an $(n-1)$-cycle and by a transposition, respectively. 

This implies that (1) $I_1$ is not contained in $A_n$, and (2) $I_0$ is contained in $A_n$ if and only if $n$ is even. Thus the extension corresponding to the fixed field of the odd permutations is ramified at $(T-t_1)$ and if $n$ is odd also at $(T)$. Therefore, $\delta(h) = [(T-t_1)]$ if $n$ is even, and $=[(T)(T-t_1)]$ if $n$ is odd. In both cases, $\delta(h_i)$ are linearly independent in $K(S,T)^*/(K(S,T)^*)^2$. 

\noindent\textbf{The case when $n$ is odd and $p\mid n-1$.}
Similar argument as in the previous case shows that the discriminant classes $\delta(h_i)$ are linearly independent, where $h = X^n - S X^{n-2} - T$. 

\noindent\textbf{The case $p=2$ and $n$ is odd.} Here we use a different argument: 
Let $L = K(A_1, \ldots, A_{n-1})$, and let $U_i = L(A_n)[x_i]$, where $x_i\in F_i$ is a root of $g_i(\bfA,X)$, $i=1,\ldots, m$. As the ramification at infinity is tame, it suffices to show that the $A_n$-finite ramification loci of the $U_i/L(A_n)$ are distinct. (Indeed, then the finite ramification loci of the $F_i$ will be distinct, and hence, the $F_i$ will be linearly disjoint.)  

The different of $U_i/L(A_n)$ is contained in the ideal generated by $\frac{\partial g}{\partial X}(x_i)$. 
Thus the ramification points of  $U_i/L(A_n)$ are $A_n = g_i(u_1), \ldots, g_i(u_r)$, where $u_1, \ldots, u_r$ are the distinct roots of $\frac{\partial g_i}{\partial X}=\frac{\partial g}{\partial X}$ in a fixed algebraic closure of $L$. We have 
\[
\frac{\partial g}{\partial X} (X) = X^{n-1} + \sum_{j=1}^{\frac{n-1}{2}} A_{2i}X^{n-2j-1} = \left(X^{\frac{n-1}{2}} + \sum_{j=1}^{\frac{n-1}{2}} \sqrt{A_{2j}}X^{\frac{n-2j-1}{2}}\right)^{2} = \prod_{j=1}^r(X-u_j)^2,
\]
so $r=\frac{n-1}{2}$. Note that $u_1, \ldots, u_r$ are distinct, and furthermore, are algebraically independent variables because they are roots of a polynomial with variable coefficients. Since the coefficients of $\frac{\partial g}{\partial X}$ depend only on even indexed $A_i$, $u_1, \ldots, u_r$ are algebraically independent of $A_1, A_3, \ldots, A_{n}$. 

Substituting $u_j$ in the above equation, and multiplying by $u_j$ gives that
\[
{u_j}^n + A_2 u_j^{n-2} + \cdots + A_{n-1} u_j =0.
\]
Thus, for $j\neq j'$, 
\begin{eqnarray*}
g_i(\bfA,u_j) + g_{i'}(\bfA, u_{j'}) &=&  A_1 (u_j^{n-1} + u_{j'}^{n-1})  + A_3 (u_j^{n-3} + u_{j'}^{n-3})+ \cdots + (A_n + \alpha_{i}) + (A_n + \alpha_{i'}) \\
&=& A_1 (u_j^{n-1} + u_{j'}^{n-1})  + A_3 (u_j^{n-3} + u_{j'}^{n-3})+ \cdots + \alpha_{i}+ \alpha_{i'}
\not\in K.
\end{eqnarray*}
But $A_n = g_i(\bfA, u_1), \ldots, g_i(\bfA, u_r)$ are the ramification points of $U_i/L(A_n)$, so these are disjoint when $i$ varies. (Note that we used several times that $-1=1$ in $K$).

\noindent\textbf{The case when $p\neq 2$ and $p\mid n$.}
Take  $h(T,X) = X^n + \frac{1}{2}X^2 + T$.  We shall use that $\Delta(h)$ equals the resultant of $h$ and $h'$ and then we apply the formula for the resultant given by the determinant of the corresponding Sylvester matrix.  We note that $h' = X$. Then we have

 \[
\Delta(h) = 
\begin{vmatrix}
1		&\cdot		&\cdot		&\frac{1}{2}	&0		&T		&0		&\cdot		&0\\		&1			&			&		&\frac{1}{2}	&0		&T		&			&0\\
		&			&\ddots		&		&		&\ddots	& 		&\ddots		& \\
		&			&			&1		&\cdot	&\cdot	&\frac{1}{2}	&0			&T\\
0		&\cdot		&\cdot		&1		&0		&\cdot	&\cdot	&\cdot		&0\\
		&0			&\cdot		&\cdot	&1		&0		&\cdot	&\cdot		&0\\
		&			&0			&\cdot	&\cdot	&1		&0		&\cdot		&0\\
		&			&			&\ddots	&		&		&\ddots	&			&	\\
		&			&			&		&0		&		&		&1			&0
\end{vmatrix}
= \pm T.
\]
(We developed the determinant by the last column, then we got an upper triangular matrix with $1$'s on the diagonal.) 
Hence $\delta(h_i) = [T-\alpha_i]$, so the $\delta(h_i)$ are linearly disjoint.  
\end{proof}

\subsection{Galois group of composition of polynomials}
The symmetric group $\Sym(\Psi)$ is maximal in the family of all permutation groups on $\Psi$, in the sense the every permutation group on $\Psi$ embeds (by definition) into $\Sym(\Psi)$. We will show that the permutational wreath product plays a similar role, when considering permutation groups on $\Psi\times \Omega$ that respect the projection map $\Psi\times \Omega \to \Omega$. 

The Galois group of the composition of polynomials $\gal(f\circ g)$ maps onto the Galois group of the outer polynomial $\gal(f)$. We shall prove that if the inner polynomial is generic, then $\gal(f\circ g)$ is the maximal possible, namely the wreath product, namely the wreath product, provided $\deg g$ is odd of the characteristic is $2$. 

\begin{proposition}\label{prop:gene-comp-wreath}
Let $K$ be a field of characteristic $p\geq 0$, $n$ a positive integer, odd if $p=2$, $f(X)\in K[X]$ a separable polynomial, $g(\bfA,X)=X^n + A_1 X^{n-1} + \cdots + A_n$ a polynomial whose coefficients are variables, $\Omega, \Phi$ the sets of roots of $f, f\circ g$. Then, the splitting field of $f\circ g$ over $K(\bfA)$ is regular over the splitting field of $f$ and 
\[
\gal(f\circ g, K(\bfA)) \cong S_n \wr_\Omega \gal(f,K).
\]
This isomorphism respects the actions of the LHS on $\Phi$ and RHS on $\{1, \ldots, n\}\times \Omega$. 
\end{proposition}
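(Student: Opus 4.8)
The plan is to realize $\gal(f\circ g, K(\bfA))$ as a permutation group on $\Phi$ that preserves the block system given by the projection $\Phi\to\Omega$, $\eta\mapsto g(\bfA,\eta)$, embed it into the wreath product via that block system, and then force the embedding to be onto by a degree count. Write $K_\Omega$ for the splitting field of $f$ over $K$, so that $\gal(K_\Omega/K)=\gal(f,K)$ acts on $\Omega$, and let $N$ be the splitting field of $f\circ g$ over $K(\bfA)$. First I would observe that $N=K_\Omega(\bfA)(\Phi)$ is simultaneously the splitting field of $f\circ g$ over $K_\Omega(\bfA)$: each $\eta\in\Phi$ satisfies $g(\bfA,\eta)\in\Omega$, so $\Omega\subseteq N$ and $K_\Omega(\bfA)\subseteq N$. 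This produces a tower $K(\bfA)\subseteq K_\Omega(\bfA)\subseteq N$ of Galois extensions in which $\gal(N/K_\Omega(\bfA))$ is the prospective base group $S_n^\Omega$ and $\gal(K_\Omega(\bfA)/K(\bfA))$ the prospective top group $\gal(f,K)$.

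For the base group I would invoke Proposition~\ref{prop:prod-sym-grps-oddchar} and descend it from $\tilde K$ to $K_\Omega$. Over $K_\Omega$ the polynomial $f$ splits, so $f\circ g=\prod_{\omega\in\Omega}(g-\omega)$ and $N$ is the compositum of the splitting fields $F_\omega$ of $g-\omega$ over $K_\Omega(\bfA)$; the $K_\Omega$-automorphism $A_n\mapsto A_n-\omega$ of $K_\Omega(\bfA)$ carries $g$ to $g-\omega$, whence $\gal(F_\omega/K_\Omega(\bfA))\cong\gal(g,K_\Omega(\bfA))\cong S_n$. Restriction to the $F_\omega$ therefore embeds $\gal(N/K_\Omega(\bfA))$ into $\prod_\omega\gal(F_\omega/K_\Omega(\bfA))=S_n^\Omega$. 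On the other hand, with $\tilde N=N\cdot\tilde K(\bfA)$ the compositum of the corresponding splitting fields over $\tilde K(\bfA)$, the restriction $\gal(\tilde N/\tilde K(\bfA))\to\gal(N/K_\Omega(\bfA))$ is injective because $N\cdot\tilde K(\bfA)=\tilde N$, and its source is $S_n^\Omega$ by Proposition~\ref{prop:prod-sym-grps-oddchar}. Two injections between groups of the same order $(n!)^{|\Omega|}$ force $\gal(N/K_\Omega(\bfA))\cong S_n^\Omega$; the same reasoning shows the restriction from $\tilde N$ is an isomorphism, i.e.\ $N$ and $\tilde K(\bfA)$ are linearly disjoint over $K_\Omega(\bfA)$, which gives $N\cap\tilde K=K_\Omega$ and the asserted regularity of $N$ over $K_\Omega$.

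To finish I would compute $[K_\Omega(\bfA):K(\bfA)]=[K_\Omega:K]=|\gal(f,K)|$, using that the algebraic extension $K_\Omega/K$ is linearly disjoint from the purely transcendental $K(\bfA)/K$; hence $[N:K(\bfA)]=(n!)^{|\Omega|}\,|\gal(f,K)|=|S_n\wr_\Omega\gal(f,K)|$. For the embedding, note that $\gal(N/K(\bfA))$ acts on $\Phi$ respecting the projection $\Phi\to\Omega$, since $\sigma(g(\bfA,\eta))=g(\bfA,\sigma\eta)$ carries the fibre over $\omega$ to the fibre over $\sigma\omega$, and that its induced action on $\Omega$ is exactly that of $\gal(f,K)$. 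Choosing a labelling of each fibre by $\{1,\dots,n\}$ identifies $\Phi$ with $\{1,\dots,n\}\times\Omega$ and embeds $\gal(N/K(\bfA))$ into $S_n\wr_\Omega\gal(f,K)$ as permutation groups; the equality of orders upgrades this embedding to the desired isomorphism, which by construction respects the two actions.

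The main obstacle is the base-group computation, namely transferring Proposition~\ref{prop:prod-sym-grps-oddchar} from the algebraically closed $\tilde K$ down to $K_\Omega$ (this is also exactly where the hypothesis that $n$ is odd when $p=2$ is consumed); everything afterward is routine imprimitive-embedding and order bookkeeping. Care is needed to confirm that the image of $\gal(N/K(\bfA))$ in $\Sym(\Omega)$ is genuinely all of $\gal(f,K)$, so that the top group is correct, and that the generic-polynomial fact $\gal(g,K_\Omega(\bfA))\cong S_n$ together with the separability of each $g-\omega$ persist over $K_\Omega$ in the relevant characteristics.
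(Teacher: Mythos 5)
Your proof is correct and follows essentially the same route as the paper: embed $\gal(f\circ g, K(\bfA))$ into the wreath product via the block structure of $\Phi\to\Omega$ (the paper does this by citing Corollary~\ref{cor:comp-of-polyn}, which rests on Lemma~\ref{lem:gal-tower}; you re-derive the standard imprimitivity embedding by hand), then identify the base group with $S_n^\Omega$ by combining Proposition~\ref{prop:prod-sym-grps-oddchar} over $\tilde K$ with injectivity of restriction maps, and finish by counting. The paper packages your two-injection order argument as the single chain $S_n^{\Omega} \geq H\cap S_n^{\Omega} = \gal(F/L(\bfA)) \geq \gal(F\tilde{K}/\tilde{K}(\bfA)) = S_n^\Omega$, which is mathematically the same step.
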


Before proving this result we bring the formal definition of the permutational wreath product and two auxiliary results in Galois theory.

Let $H,G$ be finite groups acting on finite sets $\Psi, \Omega$, respectively.
Then $H^\Omega = \{ \zeta \colon \Omega \to H\}$ acts ``independently on each row'' of $\Phi := \Psi \times \Omega$, that is, $\zeta. (\psi,\omega) = (\zeta(\omega).\psi, \omega)$. We let $G$ act on the second coordinate of $\Phi$, namely, $g.(\psi,\omega)=(\psi, g.\omega)$, so $G$ permutes the ``columns". The group generated by these permutations is the \textbf{permutational wreath product}, the group structure is 
\[
H\wr_\Omega G = H^\Omega \rtimes G,
\]  
where $\zeta^{g^{-1}} (\omega) := (g.\zeta)(\omega) = \zeta (g.\omega)$, for $\zeta\in H^\Omega$, $\omega\in \Omega$, and $g\in G$. 
 
Each element of $H\wr_{\Omega} G$ has a unique representation as a product $\zeta g$, where $\zeta\in H^{\Omega}$ and $g\in G$. The multiplication is then given by $\zeta g \xi k = \zeta \xi^{g^{-1}} gk$, $\zeta,\xi\in H^{\Omega}$ and $g,k\in G$. Hence $\zeta^g = g^{-1} \zeta g$. The action of
$H\wr_{\Omega} G$ on $\Phi$ is given by:
\[
(\zeta g). (\psi,\omega) := (\zeta(g.\omega).\psi, g.\omega).
\]
The action is well defined because 
\[
(g\zeta^{g^{-1}}).(\psi,\omega) = g. (\zeta^{g^{-1}}(\omega).\psi, \omega) = (\zeta (g.\omega).\psi, g.\omega) = (\zeta g).(\omega,\psi).
\] 
The morphism $\zeta g \mapsto g\colon H\wr_\Omega G \to G$ respects the corresponding actions because the projection on the second coordinate of $\zeta g.(\psi,\omega)$ equals  $g.\omega$.

The following lemma is an exercise in basic Galois theory which we prove for the sake of completeness.  
\begin{lemma}\label{lem:gal-tower}
Let $r$ and $n$ be positive integers, $[n] =\{1,\ldots, n\}$, and $K\subseteq L_i\subseteq M_i$ a tower of finite separable extensions with $[M_i:L_i]\leq n$, for $i=1, \ldots, r$. Let 
\begin{itemize}
\item[] $N$ be the Galois closure of the compositum of $M_1, \ldots, M_r$ over $K$, 
\item[] $H= \gal(N/K)$,
\item[] $\Lhat_i$
the Galois closure of $L_i/K$,
\item[] $\Lhat$ the compositum of $\Lhat_1, \ldots, \Lhat_r$, 
\item[] $G = \gal(\Lhat/K)$, and
\item[] $\Omega = \coprod_{i=1}^r \Omega_i$, where $\Omega_i$ is the set of all embeddings  of $L_i$ to $N$ that fix $K$.
\end{itemize}
\[
\xymatrix{
					&M_i\ar@{-}	[rr]				&						&N\ar@{.}[dlll]|{~H~}
\\
K\ar@{-}[r]			&L_i\ar@{-}[r]\ar@{-}[u]		&\Lhat_i\ar@{-}[r]		&\Lhat\ar@{-}[u] \ar@{.}@/^10pt/[lll]|{~G~}
}
\] 
Then:
\begin{enumerate}
\item $G$ is a permutation group on $\Omega$ and there exists an embedding $\rho \colon H\to S_n \wr_{\Omega}G$ such that $\rho(\sigma) =\zeta_\sigma\sigma|_{\Lhat}$, for some $\zeta_\sigma\colon \Omega\to S_n$ and every $\sigma\in H$. 
\item
$[M_i:L_i] = n$ (for some $i\in [r]$) if and only if $H'= \rho(H)$ acts transitively on $[n]\times \Omega_i$.
\end{enumerate}
\end{lemma}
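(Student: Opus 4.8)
The plan is to realize $H$ as an imprimitive permutation group on the embeddings of the $M_i$ into $N$, fibered over the action of $G$ on the embeddings of the $L_i$, and then to read off both the wreath-product embedding in (a) and the transitivity criterion in (b). First I would record the ambient Galois theory. Since $N$ is the Galois closure of $M_1\cdots M_r$ and each $\Lhat_i$ is the Galois closure of $L_i\subseteq M_i$, we have $\Lhat=\Lhat_1\cdots\Lhat_r\subseteq N$, with both $N$ and $\Lhat$ Galois over $K$; restriction therefore gives a surjection $H\to G$, $\sigma\mapsto\sigma|_{\Lhat}$. Every $\phi\in\Omega_i$ has image inside $\Lhat_i$, so $G$ acts on $\Omega=\coprod_i\Omega_i$ by post-composition, $g\cdot\phi=g\circ\phi$. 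This action is faithful: if $g$ fixes every $\phi\in\Omega_i$ it fixes $\phi(L_i)$ pointwise for all $\phi$, hence fixes $\Lhat_i$ (the compositum of these images) pointwise; imposing this for all $i$ forces $g=1$. Thus $G\hookrightarrow\Sym(\Omega)$, the first assertion of (a).

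For the embedding $\rho$, let $\Psi_i$ be the set of $K$-embeddings $M_i\to N$ and $d_i=[M_i:L_i]\le n$. The group $H$ acts on $\Psi_i$ by post-composition, and restriction to $L_i$ is an $H$-equivariant map $\Psi_i\to\Omega_i$ lying over $\sigma\mapsto\sigma|_{\Lhat}$, whose fibers (the extensions of a fixed $\phi$) have size $d_i$ since $M_i/L_i$ is separable. Choosing for each $\phi\in\Omega_i$ an enumeration $m_{\phi,1},\dots,m_{\phi,d_i}$ of its fiber embeds $\Psi_i$ into $[n]\times\Omega_i$, and I pad the remaining labels $d_i<j\le n$ as formal points. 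I then define an $H$-action on $\Phi:=[n]\times\Omega$ by sending a real point $(j,\phi)$ (with $j\le d_i$, $\phi\in\Omega_i$) to $(k,g_\sigma\phi)$, where $g_\sigma=\sigma|_{\Lhat}$ and $\sigma\circ m_{\phi,j}=m_{g_\sigma\phi,k}$, and a padding point $(j,\phi)$ to $(j,g_\sigma\phi)$. By construction this respects the projection $\Phi\to\Omega$ with column action $g_\sigma\in G$, so it lands in $S_n\wr_\Omega G$; writing it as $\zeta_\sigma g_\sigma$ gives $\rho(\sigma)=\zeta_\sigma\,\sigma|_{\Lhat}$. A direct check, splitting into the genuine action $m\mapsto\sigma\circ m$ on real points and the $g_\sigma$-action on padding, shows $\rho(\sigma\tau)$ and $\rho(\sigma)\rho(\tau)$ agree on $\Phi$, i.e.\ $\rho$ is a homomorphism; equivalently $\zeta_\sigma$ satisfies the cocycle identity $\zeta_{\sigma\tau}(\phi)=\zeta_\sigma(\phi)\,\zeta_\tau(g_\sigma\phi)$ forced by the wreath multiplication. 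Injectivity is clean: if $\rho(\sigma)=1$ then $\sigma$ fixes every embedding $M_i\to N$, hence fixes each $m(M_i)$ pointwise, hence fixes their compositum, which is $N$; so $\sigma=1$. This proves (a).

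For (b) I would use that $N$ is the Galois closure to obtain a transitive $H$-action on each $\Psi_i$: fixing the inclusion $\iota\colon M_i\hookrightarrow N$, every $K$-embedding is $\sigma\circ\iota$, so $\Hom_K(M_i,N)\cong H/\gal(N/M_i)$ is a single orbit, of size $d_i|\Omega_i|$. Under the labeling, the image of $\Psi_i$ (the real points, $d_i$ per column) is thus a single $H'$-orbit inside $[n]\times\Omega_i$. If $d_i=n$ there is no padding, so $[n]\times\Omega_i=\Psi_i$ is a single $H'$-orbit and $H'$ is transitive. Conversely, if $d_i<n$ then, as $\Omega_i\neq\emptyset$, the padding points form a nonempty $H'$-invariant set disjoint from the nonempty invariant set $\Psi_i$, a nontrivial invariant partition of $[n]\times\Omega_i$, so $H'$ is not transitive. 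Hence $[M_i:L_i]=n$ if and only if $H'$ is transitive on $[n]\times\Omega_i$.

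I expect the only delicate point to be the bookkeeping that turns the fibered $H$-action into an honest homomorphism into the wreath product, namely verifying that the chosen labelings produce the cocycle identity and that padding is handled consistently. Everything else is standard Galois theory: faithfulness of the $G$-action on $\Omega$, transitivity of $H$ on $\Psi_i$, and the kernel computation for injectivity.
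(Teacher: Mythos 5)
Your proof is correct and follows essentially the same route as the paper: both realize $H$ on the set of $K$-embeddings of the $M_i$ into $N$, fibered over $\Omega$ via restriction, fix a labeling of each fiber (the paper uses a section $T\colon \Omega\to H$ of the restriction map together with the distinguished fiber of embeddings fixing $L_i$, while you use arbitrary per-fiber enumerations padded by the identity on $\{d_i+1,\dots,n\}$), and transport the $H$-action through this labeling to land in $S_n \wr_\Omega G$. The only notable difference is one of completeness, not of method: you explicitly prove the converse direction of (b) via the nonempty invariant set of padding points, and you prove transitivity of $H$ on the embeddings of $M_i$, both of which the paper's proof leaves implicit.
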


\begin{proof}
The Galois group $\gal(\Lhat_i/K)$ acts naturally on $\Omega_i$, namely $\sigma_i.\omega_i = \sigma_i\omega_i$. This action is faithful. 
By Galois correspondence, the assumption that $\Lhat$ is the compositum  of $\Lhat_1,\ldots, \Lhat_r$ is equivalent to  $\bigcap_{i=1}^r \gal(\Lhat/\Lhat_i)=1$.
The restriction maps $G\to \gal(\Lhat_i/K)$ induce an action of $G$ on each $\Omega_i$, and thus on $\Omega$.
An element $\sigma\in G$ fixes all elements $\omega\in \Omega_i$ if and only if $\sigma \in \gal(\Lhat/\Lhat_i)$. Therefore if $\sigma$ fixes all $\omega\in \Omega$, then $\sigma\in \bigcap_{i=1}^r \gal(\Lhat/\Lhat_i) = 1$. This implies that $G$ is  a permutation group on $\Omega$. 

For each $i$, let $\Phi_i$ be the set of all embeddings of $M_i$ to $N$ that  fix $K$. Then the same argument as above gives that $H$ is a permutation group on $\Phi=\coprod_{i=1}^r \Phi_i$. 

Let $\Psi_i\subseteq \Phi_i$ be the set of all embeddings of $M_i$ to $N$ that fix $L_i$. Then  $n\geq [M_i:L_i]= |\Psi_i|$. 
Let us enumerate the elements of $\Psi_i = \{\psi_{i1}, \ldots, \psi_{ik_i}\}$ ($k_i\leq n$). 
Let $T_i\colon \Omega_i\to H$ be a section of the restriction map, i.e., $T(\omega)|_{L_i} = \omega$, for all $\omega\in \Omega_i$, $i=1, \ldots, r$, and let $T = \coprod_i T_i\colon \Omega\to H$. (Note that, if $L_i = L_j$ for some $i\neq j$, we understand $\sigma|_{L_i}\in \Omega_i$ and $\sigma|_{L_j}\in \Omega_j$ as distinct elements of $\Omega$, although they induce the same function on $L_i$.) 

Then
{\renewcommand{\theenumi}{\roman{enumi}}
\begin{enumerate}
\item 
	for every $\psi_{ij}\in \Psi_i$, we have $\psi_{ij}|_{L_i} = \id_{L_i}$ (by definition), thus 
\item 
	$(\sigma T(\omega)\psi_{ij})|_{L_i} = \sigma|_{\Lhat} \omega$, for every $\sigma\in H$. 
\item
	If $\phi\in \Phi_i$, then $T({\phi}|_{L_i})^{-1} \phi \in \Psi_i$, hence it equals $\psi_{i J(\phi)}$ for some $J(\phi) \in [k_i]$.
\item 
	If $\omega\in \Omega_i$, $\psi_{ij}\in \Psi_i$, and $\sigma\in H$, 
	then $u:=\sigma T(\omega) \psi_{i j}\in \Phi_{i}$. Hence, by (ii) $u|_{L_i} = \sigma|_{\hat{L}} \omega$ and by (iii) $(T(u|_{L_i}))^{-1} u = \psi_{i J(u)}$. So $J(u)$ depends only on $j$, $\sigma$, and $\omega$ and for fixed $\sigma,\omega$ this dependency is injective. Therefore $J(u) = \zeta_\sigma(\omega).j$, for some $\zeta_{\sigma}(\omega)\in \Sym(k_i)\leq S_n$. 
\end{enumerate}
}

Consider the map $\rho^{*} \colon \Phi \to [n] \times \Omega$ defined by $\phi \in \Phi_i \mapsto (J(\phi),\phi|_{L_i})\in [n]\times \Omega_i$. By (iii), the map $(j, \omega)\mapsto T(\omega) \psi_{i j}$, for $\omega\in \Omega_i$ and $j\in [n]$ is the inverse of $\rho^*$, so $\rho^*$ is a bijection, and thus induces an embedding
\[
\rho \colon H\to \Sym([n]\times \Omega)
\]
that respects the actions of the former on $\Phi$ and the latter on $[n]\times \Omega$.  For (a) it suffices to show that the image $H'$ of $\rho$ is contained in the wreath product $S_n \wr_{\Omega} G$ (considered as a subgroup of $\Sym([n]\times \Omega)$).  And indeed, for $\sigma\in H$, if we write $\sigmagag = \sigma|_{\Lhat}$, then we have (using (i)-(iv))
\begin{eqnarray*}
\rho(\sigma).(j,\omega) 	&=& \rho^*(\sigma T(\omega)\psi_{ij}) = (J(\sigma T(\omega)\psi_{ij}),(\sigma T(\omega)\psi_{ij})|_{L_i}) \\
						&=& (\zeta_\sigma(\omega).j,\sigmagag\omega) = ((\zeta_\sigma)^{\sigmagag}, \sigmagag).(j,\omega)
\end{eqnarray*}
for $j\in [k_i]$ and $\omega\in \Omega_i$. So $\rho(\sigma) = ((\zeta_\sigma)^{\sigmagag}, \sigmagag)\in S_n \wr_{\Omega} G$, as needed for (a).

If $[M_i:L_i]=n$, then $|\Psi_i|=n$, so $|\Phi_i| = n \cdot |\Omega_i|$. Then, since $\rho^*$ is injective and $\rho^*(\Phi_i) \subseteq [n]\times \Omega_i$, we get that $\rho^*(\Phi_i) = [n]\times \Omega_i$. Now since $H$ acts transitively on $\Phi_i$, its image $H'$ acts transitively on $[n]\times \Omega_i$. 

%
%
%
%
%

\end{proof}

\begin{corollary}\label{cor:comp-of-polyn}
Let $f,g\in K[X]$ be such that $f\circ g$ is separable, let $\Omega_f, \Omega_{f\circ g}$ be the sets of roots of $f$ and $f\circ g$, respectively, and let $n=\deg g$. Then there exists an embedding 
\[
\rho\colon \gal(f\circ g, K)\to S_n \wr_{\Omega_f} \gal(f,K)
\] 
that respects the action of the LHS on $\Omega_{f\circ g}$ and of the RHS on $[n]\times \Omega_f$. Moreover, if $\tilde{f}(X)$ is an irreducible factor of $f$ with set of roots $\tilde{\Omega}\subseteq \Omega$, then $\tilde{f}(g(X))$ is irreducible over $K$ if and only if the image of $\rho$ acts transitively on $[n]\times \tilde{\Omega}$. 
\end{corollary}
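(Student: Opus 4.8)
The plan is to derive Corollary~\ref{cor:comp-of-polyn} directly from Lemma~\ref{lem:gal-tower} by specializing the tower of fields to the one coming from the composition $f \circ g$. I would take $r = 1$ in the lemma, so that $\Omega = \Omega_1$ is a single copy, and choose the fields as follows. Fix a root $\omega$ of $f$ and a root $\eta$ of $f \circ g$ lying above it (i.e.\ $g(\eta) = \omega$). Set $L_1 = K(\omega)$ and $M_1 = K(\eta)$, so that $K \subseteq L_1 \subseteq M_1$ is a tower of finite separable extensions (separability is guaranteed by the hypothesis that $f \circ g$ is separable, which forces both $f$ and the relevant factors of $g(X) - \omega$ to be separable). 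Here $[M_1 : L_1] \le \deg g = n$ since $\eta$ is a root of the degree-$n$ polynomial $g(X) - \omega \in L_1[X]$.

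With this choice, the objects of Lemma~\ref{lem:gal-tower} become exactly the Galois-theoretic data of the composition: the Galois closure $N$ of $M_1/K$ is the splitting field of $f \circ g$, so $H = \gal(f \circ g, K)$; the Galois closure $\hat L_1 = \hat L$ of $L_1 = K(\omega)$ is the splitting field of $f$, so $G = \gal(f, K)$; and $\Omega_1$, the set of $K$-embeddings of $K(\omega)$ into $N$, is canonically identified with the set $\Omega_f$ of roots of $f$ via $\omega \mapsto$ its conjugate. Part~(a) of the lemma then furnishes an embedding $\rho\colon H \to S_n \wr_{\Omega_f} G$ respecting the actions on $\Phi = \Phi_1$ and on $[n] \times \Omega_f$; I would just verify that $\Phi_1$, the set of $K$-embeddings of $K(\eta)$ into $N$, is in natural bijection with $\Omega_{f \circ g}$, which is immediate since such embeddings correspond to the conjugates of $\eta$, i.e.\ to the roots of $f \circ g$.

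For the ``moreover'' clause I would switch the decomposition to match the irreducible factor $\tilde f$. Since $\tilde f \mid f$, its root set $\tilde\Omega \subseteq \Omega_f$ is precisely one orbit of $G = \gal(f,K)$ on $\Omega_f$ (a basic fact: the irreducible factors of $f$ correspond to the $G$-orbits on its roots). The polynomial $\tilde f(g(X))$ is irreducible over $K$ if and only if $H$ acts transitively on its root set, which is exactly the preimage of $\tilde\Omega$ under the projection $\Omega_{f \circ g} \to \Omega_f$, $\eta' \mapsto g(\eta')$; under the identification $\Omega_{f \circ g} \cong [n] \times \Omega_f$ furnished by $\rho^*$, this root set corresponds to $[n] \times \tilde\Omega$. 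Hence $\tilde f(g(X))$ is irreducible if and only if $\rho(H)$ acts transitively on $[n] \times \tilde\Omega$. The implication $[M_1 : L_1] = n$ from part~(b) is not strictly needed here, but the same transitivity mechanism is what powers this equivalence.

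**The main obstacle** I anticipate is purely bookkeeping rather than conceptual: one must carefully check that the set $\Phi_1$ of embeddings of $K(\eta)$ and the restriction-to-$L_1$ map behave well when $\tilde f$ is a proper factor, since then $K(\eta)$ need not be the full field generated by a root of $f \circ g$ for every root, and I should make sure the bijection $\Omega_{f\circ g} \cong [n] \times \Omega_f$ is genuinely $H$-equivariant and restricts correctly to the sub-root-set of $\tilde f \circ g$. Once the identification of embedding sets with root sets is pinned down and shown compatible with the projection $\eta' \mapsto g(\eta')$, both the embedding and the transitivity criterion follow mechanically from Lemma~\ref{lem:gal-tower}.
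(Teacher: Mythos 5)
Your proposal has a genuine gap, and it is exactly the point you dismiss as ``purely bookkeeping'': the reduction to $r=1$ in Lemma~\ref{lem:gal-tower} cannot produce the statement for reducible $f$. With $L_1=K(\omega)$ for a single root $\omega$, the Galois closure $\hat L_1$ is the splitting field of the \emph{minimal polynomial} of $\omega$, i.e.\ of the one irreducible factor of $f$ containing $\omega$ --- not of $f$; hence $G=\gal(\hat L_1/K)$ is a proper quotient of $\gal(f,K)$ and $\Omega_1$ is the root set of that one factor, not $\Omega_f$. Similarly $\Phi_1$, the set of $K$-embeddings of $K(\eta)$ into $N$, is the set of \emph{conjugates} of $\eta$, which equals $\Omega_{f\circ g}$ only when $f\circ g$ is irreducible. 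So for reducible $f$ (the case the ``moreover'' clause is about, and the case needed later, where $f=f_1\cdots f_r$) the lemma with $r=1$ hands you an embedding of the wrong group into the wrong wreath product, and in particular the sets $[n]\times\tilde\Omega$ for the factors $\tilde f$ not containing $\omega$ do not even lie in the domain of the action you constructed. The paper's proof takes one tower $K\subseteq K(\omega_i)\subseteq K(\nu_i)$ for \emph{each} irreducible factor $f_i$ of $f$, i.e.\ $r=$ number of factors; it is precisely this choice that makes $\hat L=\hat L_1\cdots\hat L_r$ the splitting field of $f$, $G=\gal(f,K)$, and $\Omega=\coprod_i\Omega_i=\Omega_f$. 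There is no repair of your argument that keeps $r=1$; the multi-tower structure of the lemma is the key point.

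A second false step is your claim that the Galois closure $N$ of $M_1=K(\eta)$ is the splitting field of $f\circ g$; this fails even for irreducible $f$. Take $K=\mathbb{Q}$, $f=X$, $g=X^4-5X^2+6=(X^2-2)(X^2-3)$: for \emph{any} root $\eta$ of $f\circ g=g$, the Galois closure of $\mathbb{Q}(\eta)$ is $\mathbb{Q}(\sqrt{2})$ or $\mathbb{Q}(\sqrt{3})$, never the splitting field $\mathbb{Q}(\sqrt{2},\sqrt{3})$. This matters because injectivity of $\rho$ in Lemma~\ref{lem:gal-tower} genuinely uses that $N$ is the Galois closure of the compositum of the $M_i$'s; if that field is a proper subfield of the splitting field of $f\circ g$, the lemma embeds only the proper quotient $\gal(N/K)$ of $\gal(f\circ g,K)$, which is not the assertion of Corollary~\ref{cor:comp-of-polyn}. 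In fairness, the paper's own proof (which also picks a single root $\nu_i$ per factor) is loose on exactly this point; both arguments are complete whenever each $g-\omega_i$ is irreducible over $K(\omega_i)$ --- as holds in the paper's application to the generic $g$ --- but in general the clean route to the corollary is the direct one: $\eta\mapsto g(\eta)$ exhibits $\Omega_{f\circ g}$ as a $\gal(K)$-equivariant $n$-to-$1$ fibration over $\Omega_f$, and choosing labellings of the fibers embeds $\gal(f\circ g,K)$ into $S_n\wr_{\Omega_f}\gal(f,K)$, with the ``moreover'' clause following since the root set of $\tilde f\circ g$ is the preimage $[n]\times\tilde\Omega$ of $\tilde\Omega$.
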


\begin{proof}
Let $f = \prod_{i=1}^r f_i$ be the factorization of $f$ into a product of irreducible polynomials. Let $\omega_i$ be a root of $f_i$, for $i=1, \ldots, r$. Let $L_i = K(\omega_i)$ and let $M_i = K(\nu_i)$, where $\nu_i$ is a root of $g(X) - \omega_i$. Since $\omega_i = g(\nu_i)$ we have $K\subseteq L_i \subseteq M_i$ and $[M_i:L_i]\leq n$. These extension  are separable because $f(g(X)) = \prod_{\omega\in \Omega_f} (g(X) - \omega)$ is separable. 
Now Proposition~\ref{lem:gal-tower} implies the assertion. 
\end{proof}

\begin{proof}[Proof of Proposition~\ref{prop:gene-comp-wreath}]
Let $G = \gal(f(X), K) \cong \gal(f(X) , K(\bfA))$. Let $g_i (\bfA,X) = g(\bfA,X) - \omega_i$, where $\Omega = \{\omega_1, \ldots, \omega_m\}$ are the roots of $f$. Then, since $f$ is separable, and since $f(g(\bfA,X)) = \prod_{i=1}^m g_i$, we get that $f\circ g$ is separable. Let $F$ be the splitting field of $f\circ g$ over $K(\bfA)$ and $L$ the splitting field of $f$ over $K$. By Corollary~\ref{cor:comp-of-polyn}, we can consider $H = \gal(F/K(\bfA))$ as a subgroup of $S_n \wr_{\Omega} G$. The kernel $\gal(F/L(\bfA))$ of the restriction map $H \to G$ coincides with $H\cap S_n^\Omega$.  

Proposition~\ref{prop:prod-sym-grps-oddchar} implies that $\gal(F \tilde{K}/\tilde{K} (\bfA)) = S_n^\Omega$, because it is the Galois group of the product of the $g_i$'s. (Here $\tilde{K}$ is an algebraic closure of $K$.) We have
\[
S_n^{\Omega} \geq H\cap S_n^{\Omega} = \gal(F/L(\bfA)) \geq \gal(F \tilde{K}/\tilde{K}(\bfA)) = S_n^\Omega,
\]
hence $\gal(F/L(\bfA)) = S_n^\Omega$, $F$ is regular over $L$, and $\gal(F/K(\bfA)) = S_n\wr_{\Omega} G$. 
\end{proof}

\section{Proof of Theorem \ref{thm:Ans_PAC}}
Let $K$ be a PAC field of characteristic $p\geq 0$, $f_1, \ldots, f_r$ irreducible non-associate polynomials, and $f = f_1\cdots f_r$. Let $g(\bfA,t) = t^n + A_1 t^{n-1} + \cdots + A_n$ be a polynomial with variable coefficients. Let $F$ (resp.\ $L$) be the splitting field of $f\circ g$ (resp.\ $f$) over $K(\bfA)$ (resp.\ $K$). Then, by Proposition~\ref{prop:gene-comp-wreath}, $F$ is regular over $L$ and $\gal(F/K(\bfA))= S_n \wr_{\Omega} \gal(L/K)$, where $\Omega = \coprod_{i=1}^r \Omega_i$ is the set of the roots of $f$, and $\Omega_i$ is the set of the roots of $f_i$, $i=1,\ldots, r$. Moreover, the restriction map $\alpha\colon \gal(F/K(\bfA)) \to \gal(L/K)$ coincides with the wreath product projection $S_n\wr_\Omega \gal(L/K)\to \gal(L/K)$. So 
\[
\mathcal{E}(f\circ g, \bbA^n_K) = (r_{L}\colon \gal(K) \to \gal(L/K), \alpha \colon S_n \wr_{\Omega} \gal(L/K)\to \gal(L/K)).
\]
(Here $r_{L} \colon \gal(K) \to \gal(L/K)$ is the restriction map.) 

By assumption, each $K(\omega_i)$ has a degree $n$ separable extension, for some $\omega_i\in \Omega_i$, $i=1, \ldots, r$. Thus Lemma~\ref{lem:gal-tower} (applied to $L_i=K(\omega_i)$ and $M_i$ the degree $n$ separable extension of $K(\omega_i)$) gives a homomorphism $\theta\colon \gal(K) \to S_n \wr_{\Omega} \gal(L/K)$ such that $\beta\circ \theta=r_{L}$.  So $\theta$ is a solution of $\mathcal{E}(f\circ g, \mathbb{A}^n_K)$. Moreover, Lemma~\ref{lem:gal-tower} gives that $\theta(\gal(K(\omega_i)) = \theta(\gal(K)) $ acts transitively on $[n]\times\Omega_i$, hence acts transitively on the roots of $f_i(g(\bfA,t))$. 

By Proposition~\ref{prop:geom-sol-PAC} there exists a Zariski dense set of $\mathfrak{p}\in \mathbb{A}^n(K)$ such that $\mathfrak{p}^* = \theta$. By Lemma~\ref{lem:factorizatiotype} $f_i(g(\bfA,t)) \mod \mathfrak{p}=f_i(g(\bfa,t))$ is irreducible over $K$.\qed

\section{Proof of Theorem~\ref{thm:large-finite-arithmetic-type-2}}
\subsection{Weak version}
Let $K$ be a pseudo finite field, i.e., a perfect PAC field with $\gal(K) = \hat{\mathbb{Z}}$. Fix $n,B$. Theorem~\ref{thm:Ans_PAC} asserts that $K$ satisfies  the elementary statement $\epsilon(K)$: 

\begin{quote}
If ($1+1 = 0 \Rightarrow n$ odd) and  (for every $f_1,\ldots, f_r \in K[X]$ irreducible, non-associate, and such that $\sum \deg f_i \leq B$), then there exists $(a_1, \ldots, a_n)\in K^n$ such that for $g(t) = t^n + a_1 t^{n-1} + \cdots + a_n$ and for every $i=1, \ldots, r$ we have $f_i(g(t))$ is irreducible. 
\end{quote} 

By Ax' theorem \cite[Proposition 20.10.4]{FriedJarden2008} this implies that for every $q=p^\nu$ sufficiently large $\mathbb{F}_q$ satisfies $\epsilon(\mathbb{F}_q)$. Hence we immediately get a weak form of Theorem~\ref{thm:large-finite-arithmetic-type-2}. 

In order to prove the full theorem basing on pseudo finite fields, we proof a more technical theorem than Theorem~\ref{thm:Ans_PAC} for pseudo finite fields, and then apply Ax's theorem. 

\subsection{Strong version}
We need an auxiliary lemma.
\begin{lemma}\label{lem:ncyclesinwrproduct}
Let $n$ be a positive integer, let $\Omega$ be a set of cardinality $\nu$, let $G = \langle \sigma \rangle$, for some  $\sigma \in \Sym(\Omega)$, let $\Omega = \coprod_{i=1}^r \Omega_i$ be the factorization of $\Omega$ to $G$-orbits, and let $H = S_n\wr_\Omega G$.  Then
\begin{enumerate}
\item\label{ncyclewr1}
 For each $\eta\in S_n^{\Omega}$, $\eta\sigma\in H$ acts of $[n]\times \Omega_i$.
 
\item\label{ncyclewr2} 
Let $T = \{\eta\sigma \in H \mid \eta\sigma \mbox{ acts transitively on } [n]\times \Omega_i, \forall i\}$. Then $S_n^\Omega$ acts transitively on $T$, and $|T| = \frac{(n!)^{\nu}}{n^r}$.
\end{enumerate}
\end{lemma}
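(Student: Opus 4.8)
The plan is to understand the group $H = S_n \wr_\Omega G$ concretely via its action on $[n] \times \Omega$, where $G = \langle \sigma \rangle$ acts on $\Omega$ with orbits $\Omega_1, \dots, \Omega_r$. An element $\eta\sigma \in H$ (with $\eta \in S_n^\Omega$) projects to $\sigma \in G$ under the wreath projection, and $\sigma$ permutes the columns within each orbit $\Omega_i$ cyclically. Since the second-coordinate action of $\eta\sigma$ on $[n]\times\Omega_i$ is just the $\sigma$-action on $\Omega_i$, which is a single cycle, the set $[n]\times\Omega_i$ is $\eta\sigma$-invariant. This gives part \eqref{ncyclewr1} essentially for free: the columns over $\Omega_i$ form a block, so $\eta\sigma$ restricts to a permutation of $[n]\times\Omega_i$.

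\textbf{Reducing to one orbit.}
First I would treat a single orbit $\Omega_i$ of size $\nu_i$. Enumerate $\Omega_i = \{\omega, \sigma\omega, \dots, \sigma^{\nu_i -1}\omega\}$ and track how $\eta\sigma$ moves a point $(j, \sigma^k\omega)$: applying the action rule $(\zeta g).(\psi,\omega) = (\zeta(g.\omega).\psi, g.\omega)$, one sees the first coordinate gets hit by the $S_n$-factors $\eta(\sigma^{k+1}\omega), \eta(\sigma^{k+2}\omega), \dots$ as the point cycles through all $\nu_i$ columns. After $\nu_i$ applications the point returns to the same column, and the first coordinate has been acted on by the product $\pi_i := \eta(\sigma^{\nu_i}\omega)\cdots\eta(\sigma\omega) \in S_n$ (in the appropriate order). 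The key observation is that $\eta\sigma$ acts transitively on $[n]\times\Omega_i$ \emph{if and only if} this product $\pi_i$ is an $n$-cycle in $S_n$: transitivity requires first that $\sigma$ be a single $\nu_i$-cycle on $\Omega_i$ (automatic, as it is one orbit) and second that the return map $\pi_i$ on the fiber $[n]$ be an $n$-cycle.

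\textbf{Counting.}
For part \eqref{ncyclewr2}, the transitivity conditions over distinct orbits are independent, since $\eta$ has independent coordinates $\eta(\omega)$ for $\omega$ in different orbits. So I would count, for each orbit $\Omega_i$, the number of choices of the $\nu_i$ components $(\eta(\omega))_{\omega\in\Omega_i} \in S_n^{\nu_i}$ for which the ordered product $\pi_i$ is an $n$-cycle. Fixing all but one component freely (giving $(n!)^{\nu_i - 1}$ choices) and then solving for the last so that the product equals a prescribed $n$-cycle: the number of $n$-cycles is $(n-1)! = n!/n$, and for each the last factor is uniquely determined. Hence the count over $\Omega_i$ is $(n!)^{\nu_i - 1}\cdot (n!/n) = (n!)^{\nu_i}/n$. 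Multiplying over the $r$ orbits and using $\sum \nu_i = \nu$ gives $|T| = (n!)^\nu / n^r$, as claimed. The transitivity of the $S_n^\Omega$-action on $T$ follows because conjugating $\eta\sigma$ by an element of $S_n^\Omega$ changes $\eta$ (keeping $\sigma$ fixed) and one checks this action is transitive on the set of $\eta$ with the prescribed product-of-cycles property.

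\textbf{Main obstacle.}
The step I expect to be most delicate is bookkeeping the exact order and indexing of the product $\pi_i = \eta(\sigma^{\nu_i}\omega)\cdots\eta(\sigma\omega)$ under the twisting convention $\zeta^{g^{-1}}(\omega) = \zeta(g\omega)$ from the wreath-product definition, and confirming that ``$\pi_i$ is an $n$-cycle'' is precisely equivalent to transitivity on $[n]\times\Omega_i$ rather than some conjugacy-dependent variant. I would verify this return-map computation carefully on a small case and confirm that the transitivity criterion is conjugation-invariant (so it only depends on the cycle type of $\pi_i$), which is what makes both the count and the transitive $S_n^\Omega$-action clean.
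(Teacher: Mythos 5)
Your proposal is correct and follows essentially the same route as the paper: part (a) from the block structure, the criterion that $\eta\sigma$ is transitive on $[n]\times\Omega_i$ if and only if the return product of the $\eta$-values along the $\sigma$-cycle is an $n$-cycle, the per-orbit count $(n!)^{\nu_i-1}(n-1)! = (n!)^{\nu_i}/n$, and independence across orbits. The only point of divergence is the transitivity of the $S_n^\Omega$-conjugation action on $T$, which you leave as a sketch (it completes routinely: conjugation transforms the return product by conjugation by a single component $\zeta(\omega)$, all $n$-cycles are conjugate in $S_n$, and the remaining components of $\zeta$ are solved for successively with the wrap-around equation holding automatically), whereas the paper instead computes the stabilizer of $\eta\sigma$ --- the centralizer of an $n$-cycle, of order $n$ --- and compares the resulting orbit size $(n!)^\nu/n$ with $|T|$.
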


\begin{proof}
Since $\eta \sigma.(k,\omega) = (\eta(\sigma.\omega).k,\sigma.\omega)$, we get \eqref{ncyclewr1}. 

Assume $\eta \sigma \in T$ and let $I = \langle \eta\sigma \rangle$. Then $I.(k,\omega) = [n]\times \Omega_i$, for each $(k,\omega)\in [n]\times \Omega_i$. Let $\zeta\in S_n^{\Omega}$. Then $\zeta^{-1} I \zeta . (k,\omega) = \zeta^{-1} I .(k',\omega) = \zeta^{-1} .[n]\times\Omega_i = [n]\times \Omega_i$. So $(\eta\sigma)^{\zeta}\in T$, and $S_n^{\Omega}$ acts on $T$.

It remains to prove that this action is transitive and to calculate $|T|$. We first prove it under the assumption that $G$ is transitive, i.e., that $r=1$. 

Let $\eta\in S_n^\Omega$ and $g = \eta\sigma\in S_n\wr_\Omega G$. We have 
\[
g^{k} = (\eta \sigma)^k = \eta \cdots \eta^{\sigma^{-(k-1)}} \sigma^k.  
\]
Since the order of $\sigma$ is $\nu$, it follows that $g^\nu$ is the stabilizer of $[n]\times \{\omega\}$, for some fixed $\omega\in \Omega$. Hence $g$ acts transitively on $[n]\times \Omega$ if and only if $g^\nu$ acts transitively on $[n]\times  \{\omega\}$. We have 
\[
g^\nu .(i,\omega) = \eta \cdots \eta^{\sigma^{-(\nu-1)}}.(i,\omega) = (\eta(\omega) \cdots \eta(\sigma^{(\nu-1)}(\omega))).i.
\]
So $g^\nu$ acts transitively on $[n]\times  \{\omega\}$ if and only if $\eta(\omega) \cdots \eta(\sigma^{(\nu-1)}(\omega))\in S_n$ is an $n$-cycle. We can 
choose $\eta(\omega), \ldots, \eta(\sigma^{\nu-2}(\omega))\in S_n$ to be arbitrary and $\eta(\sigma^{n-1})$ to be $(\eta(\omega) \cdots \eta(\sigma^{(\nu-1)}(\omega)))^{-1} \tau$, where $\tau$ is an $n$-cycle. Since there are $(n-1)!$ $n$-cycles in $S_n$, we have $(n!)^{\nu-1}(n-1)!=\frac{(n!)^{\nu}}{n}$ such choices. 

Let $g= \eta\sigma \in T$. Then by the latter paragraph, $\tau = \eta(\omega) \eta(\sigma.\omega) \cdots \eta(\sigma^{\nu^{-1}}. \omega)$ is an $n$-cycle.  Let $\zeta\in S_n^{\Omega}$; then we have
\begin{eqnarray*}
g = g^{\zeta} 
		&\Leftrightarrow
			&\eta \sigma = \zeta^{-1} \eta \zeta^{\sigma^{-1}} \sigma \Leftrightarrow\zeta = \eta \zeta^{\sigma^{-1}} \eta^{-1} \\
		&\Leftrightarrow
			&\zeta(\omega) = \zeta(\sigma.\omega)^{\eta^{-1}(\omega)} = \zeta(\sigma^{2}.\omega)^{\eta^{-1} (\sigma.\omega)\eta^{-1}(\omega)} = \cdots \\
			&&\qquad \; =\zeta(\sigma^{\nu}.\omega)^{\eta^{-1}(\sigma^{\nu^{-1}}.\omega)\cdots\eta^{-1} (\sigma.\omega)\eta^{-1}(\omega)} = \zeta(\omega)^{\tau^{-1}}.
\end{eqnarray*}
The latter condition is satisfied if and only if $\zeta(\omega)$ commutes with $\tau$ and $\zeta(\sigma^{j}.\omega)$ are determined by the equations. Since the centralizer of $\langle \tau \rangle$ in $S_n$ is $\langle\tau\rangle$ (\cite[Satz 6.5]{Huppert1967})m we get that there are $n$ such $\zeta$. Then the size of the orbit of $\eta\sigma$ is $\frac{(n!)^{\nu}}{n} = |T|$, so the action is transitive.

Next we consider the general case. Let $\sigma_i$ (resp.\ $G_i$) be the image of $\sigma$ (resp.\ $G$) under the map $\Sym(\Omega) \to \Sym(\Omega_i)$. The map $\eta\sigma \mapsto (\eta|_{\Omega_i}\sigma_i)_{i=1}^r$ defines an isomorphism
\[
\phi\colon S_n \wr_\Omega G \to \prod_{i=1}^r S_n\wr_{\Omega_i} G_i,
\]
that respects actions. Let $T^{(i)} = \{ \eta_i\sigma_i \in S_n\wr_{\Omega_i} G_i\mid \eta_i\sigma_i \mbox{ acts transitively on } [n]\times \Omega_i\}$. Then $\rho$ induces a bijection $\rho|_{T}\colon T\to \prod_{i=1}^r T^{(i)}$, and it follows that $|T| = \prod_{i=1}^r |T^{(i)}| = \frac{(n!)^{\nu}}{n^r}$. 

A $S_n^\Omega$-orbit $U$ of $T$ is mapped under $\phi$ to a product $\prod_{i} U_i$, where $U_i$ is an $S_n^{\Omega_i}$ orbit of $T^{(i)}$. Thus $U_i = T^{(i)}$, for every $i=1,\ldots ,r$, and $U=T$. 
\end{proof}

\begin{theorem}
\label{thm:pseudo-finite}
Let $n,B$ be fixed and let $K$ be a pseudo finite field of characteristic $p\geq 0$. If ($1+1 = 0 \Rightarrow n$ odd) and  (for every $f_1,\ldots, f_r \in K[X]$ irreducible, non-associate, and such that $\nu=\sum \deg f_i \leq B$), then there exists an absolutely irreducible smooth $K$-variety $\widehat{W}$ and a finite separable map $\pi \colon \widehat{W} \to \mathbb{A}^n$ of degree $n!^{\nu}$ such that for every $\mathbf{a} = (a_1, \ldots, a_n)\in K^n$ for which all $f_i(t^n + a_1 t^{n-1} + \cdots + a_n)$ are separable  we have  
\begin{enumerate}
\item 
\label{pf1}
$\pi^{-1} (\bfa) = 0$ or $n^r$,
\item 
\label{pf2}
$\bfa\in \pi(\widehat{W}(K))$ if and only if all $f_i(t^n + a_1 t^{n-1} + \cdots + a_n)$ are irreducible. 
\end{enumerate}
\end{theorem}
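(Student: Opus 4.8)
The plan is to construct $\widehat{W}$ as the form of the splitting variety produced by Proposition~\ref{prop:geo-sol-rat-place}, applied to the geometric embedding problem associated to $f\circ g$ over $\mathbb{A}^n$. Concretely, I would set $V=\mathbb{A}^n$ with ring of regular functions $R=K[\bfA]$, let $F$ be the splitting field of $f\circ g$ over $K(\bfA)$, and recall from Proposition~\ref{prop:gene-comp-wreath} that $\gal(F/K(\bfA))\cong S_n\wr_\Omega G$, where $G=\gal(f,K)\cong\gal(L/K)$ and the map $\alpha$ to $\gal(L/K)$ is the wreath projection. Since $K$ is pseudo finite, $\gal(K)\cong\hat{\mathbb{Z}}$ is generated by a Frobenius-like element, so the restriction $r_L\colon\gal(K)\to\gal(L/K)=G$ sends the generator to some $\sigma\in G$, acting on $\Omega$ with orbit decomposition $\Omega=\coprod_i\Omega_i$ matching the factorization $f=\prod f_i$. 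The solution $\theta$ built in the proof of Theorem~\ref{thm:Ans_PAC} sends the generator to an element of the form $\eta\sigma\in S_n\wr_\Omega G$ that acts transitively on each $[n]\times\Omega_i$; let $M=\tilde K^{\ker\theta}$ be its solution field. Apply Proposition~\ref{prop:geo-sol-rat-place} to $\mathcal{E}(f\circ g,\mathbb{A}^n)$ and $\theta$ to obtain the absolutely irreducible smooth variety $\widehat{W}$ together with $\pi\colon\widehat{W}\to\mathbb{A}^n$ of degree $|\ker\alpha|=|S_n^\Omega|=(n!)^\nu$, which is exactly the claimed degree.

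Next I would establish the two numbered assertions by matching the abstract group-theoretic data to the rational-point count in part~\eqref{part_3} of Proposition~\ref{prop:geo-sol-rat-place}. For a given $\bfa\in K^n$ with all $f_i(g(\bfa,t))$ separable, the point $\mathfrak{p}=\bfa$ is \'etale in $V_{f\circ g}$, so the Artin class $\mathfrak{p}^*$ is defined; it is the $\ker\alpha$-inner-automorphism class of solutions sending the Frobenius generator to some element of $S_n\wr_\Omega G$ lying over $\sigma$, i.e.\ an element of the form $\eta\sigma$. By Lemma~\ref{lem:factorizatiotype} and Corollary~\ref{cor:comp-of-polyn}, all $f_i(g(\bfa,t))$ are irreducible precisely when $\mathfrak{p}^*$ is transitive on each $[n]\times\Omega_i$, that is, when the representative $\eta\sigma$ lies in the set $T$ of Lemma~\ref{lem:ncyclesinwrproduct}. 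Thus $\bfa\in\pi(\widehat{W}(K))$ iff $\mathfrak{p}^*=\Theta$ iff $\eta\sigma\in T$ iff all the $f_i(g(\bfa,t))$ are irreducible, which is exactly \eqref{pf2}. For the fiber count \eqref{pf1}, part~\eqref{part_3} of Proposition~\ref{prop:geo-sol-rat-place} gives $|\pi^{-1}(\bfa)\cap\widehat{W}(K)|=|\ker\alpha|/|\Theta|$ whenever $\bfa\in U=\pi(\widehat{W}(K))$, and $0$ otherwise. Here $|\Theta|$ is the size of the $S_n^\Omega$-orbit of $\eta\sigma$ under conjugation, and Lemma~\ref{lem:ncyclesinwrproduct}\eqref{ncyclewr2} tells us $S_n^\Omega$ acts transitively on $T$ with $|T|=(n!)^\nu/n^r$, so each orbit inside $T$ has size $|\Theta|=(n!)^\nu/n^r$; hence the nonzero fiber size is $(n!)^\nu/((n!)^\nu/n^r)=n^r$, giving \eqref{pf1}.

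The one point requiring genuine care is verifying that $\mathfrak{p}^*$ really is the $S_n^\Omega$-conjugacy class of a single $\eta\sigma$ with fixed image $\sigma$ under $\alpha$, independent of $\bfa$. This follows because $K$ is pseudo finite: $\gal(K)\cong\hat{\mathbb{Z}}$ has a canonical topological generator, and every continuous solution $\theta$ sends it over the fixed element $\sigma=r_L(\mathrm{Frob})\in G$, so the $\ker\alpha=S_n^\Omega$ part of the ambiguity is exactly conjugation on the fiber $\alpha^{-1}(\sigma)$, and transitivity on $[n]\times\Omega_i$ is the condition defining $T$. I expect the main obstacle to be bookkeeping: aligning the three group-theoretic normalizations (the wreath conjugation $(\eta\sigma)^\zeta$ used in Lemma~\ref{lem:ncyclesinwrproduct}, the $\ker\alpha$-inner-automorphism action $\mathfrak{Q}^*=\tau\mathfrak{P}^*\tau^{-1}$ defining the Artin class, and the transitivity criterion of Corollary~\ref{cor:comp-of-polyn}) so that the orbit-size computation $|\Theta|=(n!)^\nu/n^r$ transfers cleanly, and checking that the parity hypothesis (that $n$ is odd when $p=2$) is inherited correctly so that Proposition~\ref{prop:gene-comp-wreath} applies and the full wreath product $S_n\wr_\Omega G$ is indeed realized. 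Once these identifications are in place, both \eqref{pf1} and \eqref{pf2} drop out of Proposition~\ref{prop:geo-sol-rat-place} and Lemma~\ref{lem:ncyclesinwrproduct}.
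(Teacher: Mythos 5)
Your proposal is correct and follows essentially the same route as the paper's own proof: Proposition~\ref{prop:gene-comp-wreath} identifies $\mathcal{E}(f\circ g,\mathbb{A}^n)$ with the wreath-product embedding problem, Lemma~\ref{lem:ncyclesinwrproduct} shows the transitive solutions form a single $\ker\alpha$-class $\Theta$ of size $(n!)^\nu/n^r$, and Proposition~\ref{prop:geo-sol-rat-place} then yields $\widehat{W}$, the degree $(n!)^\nu$, the fiber count $|\ker\alpha|/|\Theta|=n^r$, and the equivalence in \eqref{pf2} via Lemma~\ref{lem:factorizatiotype}. The only cosmetic difference is that you import the transitive solution from the proof of Theorem~\ref{thm:Ans_PAC}, whereas the paper obtains it directly from the nonemptiness of the set $T$ in Lemma~\ref{lem:ncyclesinwrproduct}; these are interchangeable.
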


\begin{proof}
Let $g(\bfA,t) = t^n + A_1 t^{n-1} + \cdots +A_n$ be a generic polynomial, let $f=f_1\cdots f_r$. Then, by Proposition~\ref{prop:gene-comp-wreath}, the associated embedding problem $\mathcal{E}(f\circ g, \mathbb{A}^n)$ is
\[
\mathcal{E} =\mathcal{E}(f\circ g, \bbA^n_K) = (r_{L}\colon \gal(K) \to \gal(L/K), \alpha \colon S_n \wr_{\Omega} \gal(L/K)\to \gal(L/K)).
\]
(Here $r_{L} \colon \gal(K) \to \gal(L/K)$ is the restriction map.) 
Let $\Sigma$ be a (topological) generator of $\gal(K) \cong \hat{\mathbb{Z}}$; then $\sigma=\Sigma|_{L}$ is a generator of $\gal(L/K)$. 
A homomorphism $\theta\colon \gal(K)\to S_n\wr_{\Omega} \gal(L/K)$ is a solution of $\mathcal{E}$ if and only if $\theta(\Sigma) = \eta_\theta \sigma$, for some $\eta\in S_n^{\Omega}$. The image of $\theta$ acts transitively on $[n]\times \Omega_i$ if and only if $\eta_\theta\sigma$ acts transitively on $[n]\times \Omega_i$, $i=1,\ldots, r$. Thus by Lemma~\ref{lem:ncyclesinwrproduct}, there exists a unique  $\ker\alpha$-inner-automorphism class of solutions, say $\Theta$,  of cardinality $\frac{(n!)^{\nu}}{n^r}$.      

Let $V = \mathbb{A}^n$ and $W=V_{f\circ g}$. For $\mathfrak{p} = \bfa = (a_1,\ldots, a_n)\in K^n$ we have $f\circ g \mod \mathfrak{p}=  f(g(\bfa,t)$ and $\mathfrak{p}$ is \'etale in $W$ if and only if $f(g(\bfa,t))$ is separable, if and only if all $f_i(g(\bfa,t))$ are separable. The action of $\gal(K)$ on the roots of $f_i(g(\bfa,t))$ is the same as the action of $\mathfrak{P}^*$ on the roots of $f_i(g(\bfA,t))$, up to labeling of the roots, where $\mathfrak{P}^*\in \mathfrak{p}^*$ (by Eq.~\ref{eq:geo-sol-act} that defines geometric solutions). 

Therefore, $f_i(g(\bfa,t))$ is irreducible if and only if the image of $\mathfrak{P}^*\in \mathfrak{p}^*$ acts transitively on the roots of $f_i(g(\bfA,t))$. By Lemma~\ref{lem:ncyclesinwrproduct} the latter holds true if and only if the action of $[n]\times \Omega_i$ is transitive. Equivalently, all $f_i(g(\bfa,t))$ are irreducible if and only if $\mathfrak{p}^* = \Theta$.  

By Proposition~\ref{prop:geo-sol-rat-place} there exists an absolutely irreducible smooth $K$-variety $\widehat{W}$ and a finite separable map $\pi\colon \widehat{W}\to \mathbb{A}^n$ of degree $|\ker\alpha| = (n!)^{\nu}$ such that for $\mathfrak{p} = \bfa= (a_1,\ldots , a_n)$ that is \'etale in $W$ we have $\mathfrak{p}^* = \Theta$ if and only if $\mathfrak{p}\in \pi(\widehat{W}(K))$ and $\pi^{-1}(\bfa) = 0$ or $\frac{|\ker \alpha|}{|\Theta|} = n^r$. 
This proves that $f_i(g(\bfa,t))$ is irreducible for every $i$ if and only if $\mathfrak{p}^* = \theta$ for $\mathfrak{p}=\bfa$ if and only if $\mathfrak{p}\in \pi(\widehat{W}(K))$, and the proof is done. 
\end{proof}

We are now ready to prove Theorem~\ref{thm:large-finite-arithmetic-type-2}. 

\begin{proof}[Proof of Theorem~\ref{thm:large-finite-arithmetic-type-2}]
Since the assertion of Theorem~\ref{thm:pseudo-finite} is elementary, it holds for large finite fields. In particular, there is a  variety as stated.

$h(t) = f(t^n + a_1 t^{n-1} + \cdots + a_n)$ has a double root if and only if $\gcd(h,h')\neq 1$, which is a is a closed condition on the coefficients $a_1,\ldots, a_n$. Thus it contributes to the error term $O_B(q^{n-1})$. 

By the Lang-Weil estimates, we have that $|\widehat{W}(\mathbb{F}_q)| = q^{n} + O_{n,B}(q^{n-\frac12})$ (the error term depends on the degree of $\pi$ which equals $|\ker \alpha| = (n!)^\nu$, hence depends on $n$ and $\nu\leq B$). Thus, by Theorem~\ref{thm:pseudo-finite}, there are exactly
\[
\frac{|\widehat{W}(K)|}{n^r}  + O_{B}(q^{n-1}) = 
\frac{1}{n^r}q^n  + O_{n,B}(q^{n-\frac{1}{2}}) 
\]
$\bfa\in \mathbb{F}_q^n$ for which all $f_i(t^n + a_1 t^{n-1} + \cdots + a_n)$ are irreducible.
\end{proof}

\section{Concluding remarks}
The results of this paper can be generalizes in few ways.

\begin{enumerate}
\item In Theorem~\ref{thm:large-finite-arithmetic-type-2} one can consider specializing $X\mapsto g(t)$ in $f_i(X)$ such that the $f_i(g(t))$ will have a given factorization type, not only irreducible, as is done in \cite{Pollack2008}. Clearly not every factorization type of $f_i(g(t))$ can occur, since the irreducible factors are conjugated, so they have the same degree. Under this restriction one can get asymptotic for the number of such monic $g$'s of degree $n$. The only extra thing is to extend Lemma~\ref{lem:ncyclesinwrproduct} to other partitions. For brevity, the author decided to omit the exact formulation and  proof.

\item As mentioned in the introduction, Theorem~\ref{thm:Ans_PAC} can be extended to the family of fields that have a PAC extension satisfying \eqref{cond:exists_sep_ext}. The extra ingredient needed are double embedding problems and the lifting property appearing in \cite{Bary-Soroker2009PACEXT}. However since it is outside of the scope of this paper, this will be dealt in details somewhere else. 
\end{enumerate}

\bibliographystyle{amsplain}
\bibliography{/Users/LandH/Documents/lior/localtexmf/bib}
\end{document}